\def\referencia#1#2{\begingroup
    #2%
    \def\@currentlabel{#2}%
    \phantomsection\label{#1}\endgroup
}
\let\@old@begintheorem=\@begintheorem
\def\@begintheorem#1#2[#3]{%
  \gdef\@thm@name{#3}%
  \@old@begintheorem{#1}{#2}[#3]%
}
\def\namedthmlabel#1{\begingroup
   \edef\@currentlabel{\@thm@name}%
   \label{#1}\endgroup
}
\newtheorem{theorem}{Theorem}[section]
\newtheorem{proposition}{Proposition}[section]
\newtheorem{lemma}{Lemma}[section]
\newtheorem{corollary}{Corollary}[section]
\newtheorem{definition}{Definition}[section]
\newtheorem{remark}{Remark}[section]
\newtheorem*{conjecture}{Conjecture}
\newtheorem*{theoremBKH}{Theorem: Partial flexibility}
\newtheorem*{theoremA}{Theorem A}
\newtheorem*{theoremB}{Theorem B}
\newcommand{\norm}[1]{\left\Vert#1\right\Vert}
\newcommand{\Real}{\mathbb R}
\newcommand{\Z}{\mathbb{Z}}
\newcommand{\Nat}{\mathbb{N}}
\newcommand{\BM}{\mathbf{B}(M)}
\newcommand{\Tor}{\mathbb{T}}
\newcommand{\oo}{\infty}
\newcommand{\p}{\mathtt{P}}
\newcommand{\mc}{\mu^{\xi}}
\newcommand{\PA}{\mathbb{P}(A)}
\newcommand{\disG}{\mathrm{dis}_{\mathrm{Gras}}}
\newcommand{\disH}{\mathrm{dis}_{\mathrm{Hff}}}
\newcommand{\disang}{d_{\angle}}
\newcommand{\lam}{\lambda}
\begin{document}

  \title{Extended flexibility of Lyapunov exponents for Anosov diffeomorphisms}
  \author[1]{Pablo D. Carrasco \thanks{pdcarrasco@mat.ufmg.br}}
  \author[2]{Radu Saghin \thanks{radu.saghin@pucv.cl Supported by Fondecyt Regular 1171477, 1210168}}
  \affil[1]{ICEx-UFMG, Avda. Presidente Antonio Carlos 6627, Belo Horizonte-MG, BR31270-90}
  \affil[2]{Pontificia Universidad Cat\'olica de Valpara\'{\i}so, Blanco Viel 596,
  Cerro Bar\'on, Valpara\'{\i}so-Chile.}

  \date{\today}

  \maketitle

\begin{abstract}
Bochi-Katok-Rodriguez Hertz proposed in \cite{Bochi2019} a program on the flexibility of Lyapunov exponents for conservative Anosov diffeomorphisms, and obtained partial results in this direction. For conservative Anosov diffeomorphisms with strong hyperbolic properties we establish extended flexibility results for their Lyapunov exponents. We give examples of Anosov diffeomorphisms with the strong unstable exponent larger than the strong unstable exponent of the linear part. We also give examples of derived from Anosov diffeomorphisms with the metric entropy entropy larger than the entropy of the linear part.

These results rely on a new type of deformation which goes beyond the previous Shub-Wilkinson and Baraviera-Bonatti techniques for conservative systems having some invariant directions. In order to estimate the Lyapunov exponents even after breaking the invariant bundles, we obtain an abstract result (Theorem \ref{teo.tecnico}) which gives bounds on exponents of some specific cocycles and which can be applied in various other settings. We also include various interesting comments in the appendices: our examples are $\mathcal{C}^2$ robust, the Lyapunov exponents are continuous (with respect to the map) even after breaking the invariant bundles, a similar construction can be obtained for the case of multiple eigenvalues.
\end{abstract}


\section{Introduction}\label{sec.introduction}

One of the central tasks in Global Analysis is to try to understand the structure of the diffeomorphism group of a given manifold. For this, and in virtue of the difficulty of the problem, it is usually imposed the existence of some invariant geometrical structure, and the focus is on the subgroup of diffeomorphism preserving such structure. This is the point of view taking here: we consider a closed Riemannian manifold $M$ and restrict our considerations to $\mathrm{diff}^r_{\mu}(M)$, the subgroup of $\mathcal{C}^r$ smooth diffeomorphism preserving some smooth volume $\mu$, which, for definitiveness, can be taken as the volume element induced by the Riemannian metric.

For $f\in \mathrm{diff}^r_{\mu}(M)$ the Oseledec's theorem together with Pesin's theory provide the existence of  invariant manifolds, defined almost everywhere in the manifold and depending measurably on the base point, such that the asymptotic growth of the tangent vectors to such manifolds are well defined. These are the the \emph{Lyapunov exponents}, and since our article deals with these quantities, we recall the precise definition below.

\begin{definition}
For a $\mathcal{C}^r$ diffeomorphism $f:M\to M, x\in M$, $r\geq 1$ and $x\in M, v\in T_xM\setminus\{0\}$ the Lyapunov exponent (of $f$) at $(x,v)$ is the quantity
\[
	\bar{\chi}(x,v)=\limsup_{n\to\oo} \frac{1}{n}\log \|D_xf^n(v)\|.
\]
If the above limit exists we denote by $\chi(x,v)$ its value.  
\end{definition}

If $f\in\mathrm{diff}^r_{\mu}(M), m=\dim M$ there exists $M_0 \subset M$ of full measure and measurable functions $\chi^1,\cdots, \chi^m:M_0\to\Real, d:M_0\to \{1,\cdots, m\}$ such that for $x\in M_0$ it holds:
\begin{itemize}
	\item $\chi^1(x)>\chi^2(x)>\cdots>\chi^{d(x)}(x)$; this is called the Lyapunov spectrum of the system.
	\item $E^i(x):=\{v\in T_x M: v=0\text{ or }\chi(x,v)=\chi^i(x)\}$ is a subspace of $T_xM$ for $1\leq i\leq d(x)$ and $T_xM=\bigoplus_{i=1}^{d(x)}E^i(x)$. Moreover, $x\mapsto E^i(x)$ is measurable (as map from $M_0$ to the Grassmanian of $M$).
\end{itemize}
As the functions $d,\chi^i, \dim E^i$ are $f$ invariant, ergodicity of the system $(f,\mu)$ implies that these functions are constant almost everywhere. Assuming this to be the case (which is not a serious restriction as one can always decompose $\mu$ into ergodic components), and under a mild extra regularity assumption of $f$ one obtains furthermore that $E^u:=\bigoplus_{\chi^i>0} E^i, E^s:=\bigoplus_{\chi^i<0} E^i$ are integrable to laminations with smooth leaves. These laminations then provide one of the referred invariant structures that may aid for the understanding of the system. 

The dimension of the subspace $E^i(x)$ is called the multiplicity of the exponent $\chi^i(x)$, and the sum of all the multiplicities is $m$, the dimension of manifold. It is useful sometimes to consider an alternative notation of the Lyapunov exponents, each one of them counted with its multiplicity, and thus we have $m$ exponents for almost every point, possibly repeating themselves if $d(x)<m$:
$$
\chi^1(x)\geq\chi^2(x)\geq\cdots\geq\chi^m(x).
$$

It is natural then to inquiry whether there exists any limitation for these invariant data. Concretely, one can ask the following.

\noindent\textbf{Question:} if $g:M\to M$ is a diffeomorphism preserving $\mu$ isotopic/homotopic to $f$, what are the restrictions on the Lyapunov spectrum of $g$?  

\smallskip 

Lack of such restrictions is known as flexibility, and it has been receiving a quite deal of attention lately, impulsed by the program proposed by J. Bochi, A. Katok and F. Rodriguez-Hertz (\cite{Bochi2019}; see also \cite{E2019,EK2019,BE2020} for further results on flexibility). The difficulty in dealing with this type of question resides in that there are only a few methods known to perturb a system inside $\mathrm{diff}^r_{\mu}(M)$ that permit to control the behavior of the Lyapunov exponents.

\paragraph{Perturbation methods for conservative systems} Probably the first of such constructions is the one appearing in the classical Shub-Wilkinson example \cite{Patho}, which led to the local version \cite{BARAVIERA2003}. The argument relies, roughly speaking, in mixing two invariant directions, and the consequence is that the two corresponding Lyapunov exponents will move closer to each other. In \cite{Bochi2019} the authors extend this method to a semi-local type argument, by making several local perturbations in different parts of the manifold. Controlling the interactions between these and obtaining precise control for the exponents is delicate. 

However, these methods seem to have some restrictions. Since they depend on the persistence of the aforementioned invariant directions, one can only move pairs of Lyapunov exponents closer together, and not further apart. Also the sum of the two exponents stays constant, and this implies that the metric entropy can only be decreased.

In this article we present some global perturbation methods for conservative diffeomorphisms that will allow us to address these type of difficulties, and apply them to the flexibility program. Nonetheless, due to their generality we expect them to be applicable in other contexts. We consider larger perturbations mixing strongly two invariant directions, such that the splitting is destroyed (the two dimensional bundle remains however invariant). We are able then to obtain bounds on the new Lyapunov exponents which will show us that the exponents can be moved further apart, and even can change the sign, and thus increase the metric entropy.

\subsection{Flexibility for Anosov diffeomorphisms} To have some hope of answering the flexibility question, some restrictions are needed. A natural condition consists of starting from a map $f$ whose dynamics is well understood and studying what happens under deformations. This is the approach taken by Bochi et al., and the one that we follow here. Concretely, the authors restrict their attention to the set $\mathscr{A}$ of Anosov diffeomorphisms of a manifold $M$, which for definiteness they take $M=\Tor^d=\Real^d/\Z^d$. Denoting by $\mu$ the corresponding Haar (probability) measure, we let $\mathscr{A}_\mu=\mathrm{diff}^r_{\mu}(\Tor^d)\cap \mathscr{A}$.

In this case, any two homotopic elements in $\mathscr{A}$ are topologically conjugate \cite{Franks1969,Manning1974}, and in particular have the same topological entropy. Moreover, any $f\in\mathscr{A}$ is homotopic, and therefore topologically conjugate to the linear automorphism $L$ induced by its action on $H_1(\Tor^d)\approx \Z^d$; $L$ is called the linear part of $f$. For such an automorphism, its Lyapunov exponents $\log|\lambda^i|$ are given by its eigenvalues $\lambda^i$. Let us also point out that for $f\in \mathscr{A}_{\mu}$ we have the following restrictions on its spectrum:
\begin{align*}
\MoveEqLeft \int \sum_{i=1}^d \chi^i_f\cdot d\mu =0\\
h_{\mu}(f)&= \sum_{i=1}^d \int\chi^i_f\vee 0\cdot d\mu\leq h_{top}(f)=h_{top}(L)= \sum_{i=1}^d \int \log|\lambda^i|\vee 0\cdot d\mu.
\end{align*}
Above $h_{\mu}(f)$ denotes the metric entropy of $f$; this second part is a classical result due to Pesin, and we direct the reader to \cite{Barreira2013} for a proof of both these properties.

The questions of interest are the following.

\smallskip 
 
\noindent\textbf{Question 1: Strong flexibility - Anosov} (Problem 1.3 in \cite{Bochi2019}) Given $L\in\mathrm{Sl}(d,\Z)$ a linear automorphism of $\Tor^d$ with eigenvalues $|\lam^1|\geq |\lam^2|\geq\cdots |\lam^u|>1>|\lam^{u+1}|\geq\cdots |\lam^d|$, and given numbers $\eta^1\geq \eta^2\geq \cdots \eta^u>0>\eta^{u+1}\geq\cdots \eta^d$ satisfying
\begin{enumerate}
	\item[a)] $\sum_{i=1}^d\eta^i=0$;
	\item[b)] $\sum_{i=1}^u\eta^i\leq \sum_{i=1}^u\log|\lam^i|$.
\end{enumerate}
Does there exist $f\in\mathscr{A}_{\mu}$ homotopic to $L$ such that $\chi^i_f=\eta^i$ for every $i$? 

\smallskip 

The above implies in particular that for any $0<h\leq \sum_{i=1}^u\lam^i$ there exists $f\in \mathscr{A}_{\mu}$ homotopic to $L$, with the same unstable dimension as $L$ and with $h_{\mu}(f)=h$.

\smallskip 

\noindent\textbf{Question 2: Strong flexibility - general} (Conjecture 1.2 in \cite{Bochi2019} for the homotopy class of $L$) Given $L\in\mathrm{Sl}(d,\Z)$ a linear automorphism of $\Tor^d$, and given non-zero numbers $\eta^1\geq \eta^2\geq \cdots \eta^d$ satisfying
\begin{enumerate}
	\item[a)] $\sum_{i=1}^d\eta^i=0$;
\end{enumerate}
Does there exist a conservative (ergodic) $f$ homotopic to $L$ such that $\chi^i_f=\eta^i$ for every $i$? 

\smallskip 
 
As no restrictions in the topological entropy are imposed, the map $f$ may not be Anosov.

The Question 2 seems difficult to approach at this moment in its generality. In the absence of some hyperbolicity it is arduous to conclude the ergodicity of a map, let alone estimating the Lyapunov exponents. So it seems natural to ask an intermediate question within the partially hyperbolic setting, where one can use some geometric objects in order to both prove ergodicity and obtain control on the expansion rates.

\smallskip 
 
\noindent\textbf{Question 3: Strong flexibility - partially hyperbolic} Given $L\in\mathrm{Sl}(d,\Z)$ a linear automorphism of $\Tor^d$ with eigenvalues $|\lam^1|\geq|\lam^2|\geq\cdots |\lam^u|>1>|\lam^{u+1}|\geq\cdots |\lam^d|$, and given non-zero numbers $\eta^1\geq \eta^2\geq \cdots \eta^d$ satisfying
\begin{enumerate}
	\item[a)] $\sum_{i=1}^d\eta^i=0$;
	\item[b)] $\eta^1\leq\log|\lam^1|$;
	\item[c)] $\eta^d\geq\log|\lam^d|$.
\end{enumerate}
Does there exist a conservative (ergodic) partially hyperbolic $f$ map homotopic to $L$ such that $\chi^i_f=\eta^i$ for every $i$? 

\smallskip 

Since in this case the exponents can both decrease and increase, and even change sign, it is possible that the metric and topological entropy of $f$ are greater than the entropy of $L$.

Question 3 corresponds to partially hyperbolic maps with one dimensional stable and unstable bundles, but one can also consider corresponding questions for higher dimensional bundles, or within the weak partially hyperbolic setting.
 
In \cite{Bochi2019} the authors are able to give a partial answer to Question 1 assuming simplicity of the spectrum of $L$.

\begin{theoremBKH} (Theorem 1.5 in \cite{Bochi2019}) Let $L\in\mathrm{Sl}(d,\Z)$ be a linear automorphism of $\Tor^d$ with eigenvalues $|\lam^1|> |\lam^2|>\cdots |\lam^u|>1>|\lam^{u+1}|>\cdots |\lam^d|$, and given non-zero numbers $\eta^1> \eta^2> \cdots\eta^u>0>\eta^{u+1}>\cdots \eta^d$ satisfying
\begin{enumerate}
	\item[a)] $\sum_{i=1}^d\eta^i=0$;
	\item[b)] $\sum_{i=1}^k\eta^i\leq \sum_{i=1}^k\log|\lam^i|,\ \ \forall k\in\{1,2,\dots d-1\}.$
\end{enumerate}
Then there exists $f\in \mathscr{A}_{\mu}$ such that $\chi^i_f=\eta^i$ for every $i$.
\end{theoremBKH}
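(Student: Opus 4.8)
The plan is to deform $L$ itself into the desired $f$ by a finite family of \emph{localized} conservative perturbations of Shub--Wilkinson/Baraviera--Bonatti type \cite{Patho,BARAVIERA2003}, placed at pairwise disjoint neighborhoods of periodic orbits, each transferring a prescribed small amount of Lyapunov exponent between two of the eigendirections of $L$. We take $L$ as the starting point, which is ideal: it is $\mathcal{C}^{\infty}$, conservative, Anosov, its tangent bundle splits into one-dimensional $L$-invariant (constant) eigenbundles $E^{1}\oplus\cdots\oplus E^{d}$ with distinct exponents $\log|\lambda^{1}|>\cdots>\log|\lambda^{d}|$, and $|\det L|=1$ forces $\sum_{i}\log|\lambda^{i}|=0$, so that hypotheses (a)--(b) say precisely that $(\eta^{i})_{i}$ is majorized by $(\log|\lambda^{i}|)_{i}$. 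By the Hardy--Littlewood--P\'olya description of the majorization order one can then join the two vectors by finitely many ``Robin Hood'' transfers on \emph{adjacent} indices, each replacing current entries $v_{j}>v_{j+1}$ by $v_{j}-t,\ v_{j+1}+t$ with $t>0$ small: since the deficits $\sum_{k\le i}(\log|\lambda^{k}|-\eta^{k})$ are nonnegative the transfers can all be taken of this equalizing type, the strict inequalities allow keeping every intermediate vector strictly decreasing with entry $u$ positive and entry $u+1$ negative, and each $t$ may be taken as small as we wish. This gives a decomposition $(\log|\lambda^{i}|)_{i}-(\eta^{i})_{i}=\sum_{m=1}^{N}t_{m}^{*}(e_{j_{m}}-e_{j_{m}+1})$.

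For each $m$ we pick a periodic orbit of $L$ and a small ball $B_{m}$ around it, the $B_{m}$ pairwise disjoint, and compose $L$ with a conservative $\mathcal{C}^{r}$-small diffeomorphism $h_{m}(s_{m})$ supported in $B_{m}$ which, in the constant eigen-coordinates, acts as a bump-modulated rotation in the plane $E^{j_{m}}\oplus E^{j_{m}+1}$ and depends on a parameter $s_{m}$. Regarding $E^{j_{m}}\oplus E^{j_{m}+1}$ as a two-dimensional center flanked by the dominating $E^{1}\oplus\cdots\oplus E^{j_{m}-1}$ and the dominated $E^{j_{m}+2}\oplus\cdots\oplus E^{d}$ (and, for $d=2$, simply using robustness of the Anosov splitting), the Baraviera--Bonatti mechanism shows that for $L\circ h_{m}(s_{m})$ the larger exponent of the pair strictly decreases and the smaller strictly increases while their sum is unchanged, all other exponents being essentially unaffected and the coarse dominated splitting persisting by $\mathcal{C}^{1}$-robustness; moreover the transferred amount $\tau_{m}(s_{m})$ is continuous, vanishes at $s_{m}=0$, and takes every value in some interval $[0,\bar\tau_{m}]$ with $\bar\tau_{m}>t_{m}^{*}$.

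Finally set $f_{s}=L\circ h_{1}(s_{1})\circ\cdots\circ h_{N}(s_{N})$ for $s=(s_{1},\dots,s_{N})$. Each $f_{s}$ is a $\mathcal{C}^{1}$-small, conservative, identity-isotopic perturbation of $L$, hence Anosov with unstable dimension $u$, homotopic to $L$, and, being conservative Anosov, ergodic; so its Lyapunov spectrum is a well-defined vector $E(s)\in\{\,\sum_{i}\chi^{i}=0\,\}$ depending continuously on $s$. The crux of the argument (and the step I expect to be the main obstacle, precisely the delicate control of interactions flagged in \cite{Bochi2019}) is to establish that $E(s)=(\log|\lambda^{i}|)_{i}-\sum_{m=1}^{N}\tau_{m}(s_{m})(e_{j_{m}}-e_{j_{m}+1})+\mathrm{Err}(s)$ with $\mathrm{Err}(s)$ uniformly small; this requires quantifying how a perturbation supported on a tiny ball affects the \emph{asymptotic} exponents (roughly, through the small fraction of orbit time spent in $B_{m}$, up to a bounded rate) and checking that perturbations at far apart orbits act almost independently. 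Granting this, choose $s^{*}$ with $\tau_{m}(s_{m}^{*})=t_{m}^{*}$ for all $m$; along a suitable coordinate slice through $s^{*}$ the ``ideal'' map $s\mapsto E(s)-\mathrm{Err}(s)$ is a diffeomorphism onto a neighborhood of $(\eta^{i})_{i}$ inside the affine subspace containing the image of $E$ (the vectors $e_{j_{m}}-e_{j_{m}+1}$ in use spanning its tangent space), so by $\mathcal{C}^{0}$-closeness and a Brouwer degree argument there is $s^{\sharp}$ with $E(s^{\sharp})=(\eta^{i})_{i}$. The map $f=f_{s^{\sharp}}\in\mathscr{A}_{\mu}$ is then homotopic to $L$ and satisfies $\chi^{i}_{f}=\eta^{i}$ for every $i$.
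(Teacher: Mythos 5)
The theorem you are proving is not proved in this paper at all: it is quoted verbatim from \cite{Bochi2019} (their Theorem 1.5), and the introduction explicitly describes the strategy there as a ``semi-local'' extension of the Shub--Wilkinson/Baraviera--Bonatti mechanism, ``making several local perturbations in different parts of the manifold,'' with the warning that ``controlling the interactions between these and obtaining precise control for the exponents is delicate.'' Your sketch correctly reproduces the skeleton of that attributed strategy --- reduce hypotheses (a)--(b) to a majorization statement, factor the difference $(\log|\lambda^i|)_i-(\eta^i)_i$ into adjacent Robin--Hood transfers, realize each transfer by a conservative mixing of two adjacent one-dimensional eigenbundles, and close the argument by continuity and degree --- and you are right to flag the interaction control as the real difficulty.

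There is, however, a second gap you understate, and it is prior to the interaction issue. A Baraviera--Bonatti perturbation $h_m(s_m)$ supported in a \emph{fixed small} ball $B_m$ changes the relevant pair of exponents by an amount roughly proportional to $\mathrm{vol}(B_m)$ times a quantity that is $O(s_m^2)$ for small $s_m$ and that cannot be pushed beyond the regime in which the map stays Anosov, the coarse dominated splitting persists, and the other exponents remain ``essentially unaffected.'' Consequently the achievable transfer $\bar\tau_m$ from a \emph{single} bump is small (on the order of the volume of its support), and your claim that $\tau_m(s_m)$ ``takes every value in some interval $[0,\bar\tau_m]$ with $\bar\tau_m>t_m^*$'' is unjustified when $t_m^*$ is not small --- and in general the target deficits $\sum_{i\le k}(\log|\lambda^i|-\eta^i)$ can be of any size allowed by the majorization inequalities. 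To reach such targets one must iterate: perform a \emph{sequence} of $\mathcal{C}^1$-small conservative perturbations, re-establishing at each step the (now only approximately invariant, measurable) one-dimensional Oseledets bundles before the next mix, and track the cumulative effect on the exponents with a compounding error analysis. This iterative, rebuild-after-each-step structure is the actual content of the BKRH proof, and it is qualitatively harder than composing a fixed finite family $L\circ h_1(s_1)\circ\cdots\circ h_N(s_N)$ and invoking Brouwer degree. Your reduction via Hardy--Littlewood--P\'olya to adjacent transfers is also plausible but needs a small argument that one can choose the transfer order so that every intermediate vector stays strictly decreasing with entries $1,\dots,u$ positive and $u+1,\dots,d$ negative (the strict inequalities make this possible, but it must be checked, since a generic HLP decomposition need not respect the sign pattern at the index $u$).
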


However, their methods do not seem to be fitted to give a complete answer to Question 1, and do not seem to work well for Questions 2 or 3 (although they may give a partial answer for Question 3). In particular, as we mentioned, they are not able to deal with the problem of raising strong unstable exponents, or the sum of the unstable exponents. In consequence, they asked whether the following is true.

\medskip 
 
\noindent\textbf{Question:}
{\it 
Does there exist $f:\Tor^3\to \Tor^3, f\in \mathscr{A}_{\mu}$ with linear part $L$ such that the largest Lyapunov exponent of $f$ is larger than the logarithm of the largest eigenvalue of $L$?
}

\medskip

For this to have a positive answer, necessarily the dimension of the unstable bundle of $f$ has to be two. To our knowledge there is no such example in the literature.

Our first main result gives an affirmative answer to this question.

\begin{theoremA}\hypertarget{theoremA}{}
Let $L$ be a hyperbolic automorphism of $\mathbb T^3$ with the eigenvalues $\lambda_{su}>\lambda_{wu}>1>\lambda_s$. Then there exist $n\in\Nat$ and a $C^{\oo}$ diffeomorphism $f$, isotopic to the identity and preserving the volume, such that $f\circ L^n$ is Anosov, preserves the volume, and the highest Lyapunov exponent of $f\circ L^n$ is strictly greater than $\log\lambda_{su}^n$.  
\end{theoremA}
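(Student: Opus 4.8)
The plan is to start from the linear automorphism $L$ on $\mathbb{T}^3$, whose unstable subspace is two-dimensional with eigenvalues $\lambda_{su} > \lambda_{wu} > 1$ corresponding to directions $E^{su}_L$ and $E^{wu}_L$, and a one-dimensional stable direction $E^s_L$. Passing to a power $L^n$ gives us room: the gap between the stable eigenvalue and the unstable ones becomes as large as we want, so we can afford a fairly large perturbation supported on the unstable $2$-plane without destroying hyperbolicity. I would fix $n$ large at the end, once the size of the required deformation is known. The deformation $f$ will be a volume-preserving diffeomorphism isotopic to the identity, designed so that it is the identity outside a small ball (or a union of balls), and inside that region it strongly mixes the $E^{su}$ and $E^{wu}$ directions — that is, it applies a rotation-like map in the $2$-plane spanned by those directions. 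The key point, in contrast with Shub--Wilkinson/Baraviera--Bonatti, is that this mixing is large enough to destroy the $E^{su}\oplus E^{wu}$ splitting (while keeping the $2$-plane bundle $E^{cu}$ itself invariant, since it equals the unstable bundle of the partially hyperbolic map $f\circ L^n$ and is uniquely integrable as the weak-unstable of the linear part). So $g := f\circ L^n$ will be Anosov with a $2$-dimensional unstable bundle on which there is no longer a dominated splitting.

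The heart of the argument is to estimate the top Lyapunov exponent of $g$ restricted to the unstable bundle $E^{cu}$, and show it can be made $> n\log\lambda_{su}$. Here I would invoke the abstract result, Theorem~\ref{teo.tecnico}, which is exactly designed to bound exponents of cocycles of the form "large conformal-like expansion composed with a rotation/shear in a plane". Concretely, the derivative cocycle of $g$ restricted to $E^{cu}$ is, in suitable coordinates, $D_xf|_{E^{cu}} \circ \mathrm{diag}(\lambda_{su}^n, \lambda_{wu}^n)$, and one wants the upper exponent of this $\mathrm{GL}(2,\mathbb{R})$-cocycle over $(g,\mu)$. The philosophy is that of Herman/Knill-type estimates and of Baraviera--Bonatti's computation, but reversed: when you insert a rotation by an angle bounded away from $0$ and $\pi/2$ into a hyperbolic $2\times 2$ matrix with eigenvalue ratio $\lambda_{su}^n/\lambda_{wu}^n$, the top singular value of the product typically exceeds $\lambda_{su}^n$ — roughly because the rotation forces the expanding direction to "sample" both axes, and the arithmetic-mean versus geometric-mean asymmetry pushes the top exponent up. One then integrates this pointwise gain against $\mu$ using the fact that the rotation is active on a set of positive measure (the support of $f$'s perturbation, which is forward-invariant-visited with positive frequency by ergodicity of $g$ — itself following from accessibility/the partially hyperbolic structure, or simply because $g$ is conjugate to $L^n$ and one transfers ergodicity). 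Since $\int_{\mathbb{T}^3}\sum_i\chi^i_g\,d\mu = 0$ and the two unstable exponents sum to $\log|\det D g|_{E^{cu}}| = n(\log\lambda_{su}+\log\lambda_{wu})$ on average (the $2$-plane bundle has invariant Jacobian because $f$ preserves volume and the stable bundle has a well-defined Jacobian too), raising $\chi^1_g$ forces $\chi^2_g$ down, which is consistent with hyperbolicity as long as $n$ is large enough that $\chi^2_g$ stays positive.

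The main obstacle — and where Theorem~\ref{teo.tecnico} does the real work — is obtaining a \emph{quantitative lower bound} on the top exponent after the invariant splitting is broken. Unlike the Shub--Wilkinson setting, there is no invariant sub-bundle left inside $E^{cu}$ to carry a clean exponent, so one cannot just perturb an existing exponent; one must control the exponent of a genuinely non-dominated $2$-dimensional cocycle. The strategy is to design $f$ so that the cocycle satisfies the hypotheses of Theorem~\ref{teo.tecnico} (some cone-invariance or "twisting on a definite-measure set" condition), which yields an explicit bound of the form $\chi^1_g \geq n\log\lambda_{su} + c$ with $c = c(\text{angle of rotation}, \lambda_{su}/\lambda_{wu}, \mu(\text{support}))>0$, and then to check that all the required hypotheses — volume preservation, isotopy to identity, $C^\infty$ smoothness, the perturbation being $C^1$-small relative to $L^n$ so that $g$ remains Anosov, and the measure-of-support estimate surviving — can be met simultaneously by choosing $n$ large first and the perturbation parameters second. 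A secondary technical point is verifying that the unstable bundle of $g$ is indeed the (uniquely integrable) weak-unstable foliation of $L^n$, so that "the $2$-plane remains invariant" is literally true and the exponent bookkeeping ($\chi^1+\chi^2$ on $E^{cu}$, $\chi^3$ on $E^s$) is valid; this follows from structural stability of the partially hyperbolic splitting $T\mathbb{T}^3 = E^{cu}\oplus E^s$ together with $E^{cu}$ being the only $2$-dimensional expanding bundle. Once these are in place, choosing $\eta^1 := \chi^1_g > n\log\lambda_{su}$ gives Theorem A.
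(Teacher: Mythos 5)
Your proposal correctly identifies the structural skeleton — keep the $2$-plane bundle $E^{cu}$ invariant while destroying the domination inside it, choose $n$ large to retain the Anosov property, reduce to a bound on the top exponent of a $2$-dimensional cocycle, and invoke Theorem~\ref{teo.tecnico}. But the actual perturbation and, more importantly, the \emph{mechanism} by which the top exponent goes up are substantially different from the paper's, and the mechanism you describe would in fact push the exponent in the wrong direction.

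You propose a perturbation that is the identity outside a small ball and acts as a ``rotation-like map'' inside $E^{cu}$, with the heuristic that inserting a rotation into $\mathrm{diag}(\lambda_{su}^n,\lambda_{wu}^n)$ raises the top singular value by an ``AM--GM'' effect. This is backwards: a rotation is an isometry, so it cannot add any expansion, and the classical Furstenberg/Baraviera--Bonatti picture shows that mixing the two unstable directions by a rotation spreads the stationary measure on the projective line away from the $\lambda_{su}$-axis and therefore \emph{decreases} the top exponent while raising the bottom one. Indeed the paper stresses that any small perturbation generically lowers the top volume-exponent (by the entropy-maximizing-measure argument in the introduction). To raise the top exponent the paper uses a \emph{shear} $f_t(x,y,z)=(x,\,y+t\sin 2\pi x,\,z+bt\sin 2\pi x)$, which is a global, volume-preserving deformation supported on all of $\mathbb{T}^3$ (not in a ball), preserving the $2$-plane foliation, with $\|Df_t\|\sim t$ \emph{unbounded}; the parameter is taken large, $t=\lambda_{ws}^{-n\nu}$, so that the perturbation is far from $C^1$-small and only the composition with the very hyperbolic $L^n$ stays Anosov. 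The genuine extra expansion (of order $t^{1-\alpha}$) created by the shear on a good cone, combined with the separation-of-preimages-of-the-bad-cone lemma, the Markov partition subordinate to the unstable foliation, and the preservation of Lipschitz unit vector fields, is what makes the adapted-family hypotheses of Theorem~\ref{teo.tecnico} hold. Without an expansive shear, the abstract theorem has nothing to bite on, since its hypothesis H3 requires genuine expansion $\|A(m)X(m)\|\ge\lambda$ on the good set, and a rotation cannot supply $\lambda>\lambda_{su}^n$.

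Two smaller inaccuracies: the paper also passes to the inverse $L^{-1}$ and phrases everything for two stable eigenvalues (raising the weak stable exponent of $L^{-n}\circ f$), rather than working directly on the unstable bundle as you do; this is only a cosmetic difference. More seriously, ``one transfers ergodicity'' from $L^n$ via the conjugacy is not valid — the Franks--Manning conjugacy is only $C^0$, not absolutely continuous — though the conclusion still holds because a $C^2$ volume-preserving Anosov diffeomorphism is ergodic by Anosov's theorem; the paper relies on the latter. So the overall outline of your plan is recognizable, but the crucial step — what $f$ actually is, and why it increases rather than decreases the top exponent — is missing, and the rotation idea you substituted would fail.
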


\begin{remark}
Higher dimensional constructions can also be done by the same methods. Since the adaptations are straightforward and writing the general theorem would only complicate the statement, we decide to leave this to the readers. Similar considerations apply for \hyperlink{theoremB}{Theorem B}.
\end{remark}

The iterate $L^n$ is performed to guarantee strong hyperbolic conditions (i.e.\@ being far away from the boundary of $\mathscr{A}_{\mu}$). One can ask if this is necessary, and whether it is possible to prove the same result for $L$ instead of an iterate. At this moment this question remains unknown, however, in contrast to the case when the exponents are lowered, it seems unlikely that one can raise the largest exponent in the same homotopy class without any restriction. For a generic small perturbation of $L$ the entropy maximizing measure will be in general different from the volume, therefore by the variational principle both the highest exponent and the sum of the two positive exponents has to decrease; this means that the perturbation needed in order to make the highest exponent increase again has to be considerably large. If $L$ is close to the boundary of $\mathscr{A}_{\mu}$ it is possible that such a large perturbation necessarily takes the map outside the Anosov realm. Although in principle we do not show that this is an obstruction for the final map to be Anosov, we want to point out to the reader the possible existence of additional restrictions to achieve full flexibility. We also have some doubts whether it is possible to make within $\mathscr{A}_{\mu}$ the highest exponent arbitrarily close to the sum of the positive exponents of the linear part (this would imply that the weak unstable exponent is arbitrarily close to zero). With our method we can increase the highest exponent only with at most two thirds of the weak unstable exponent.

As pointed out in \cite{Bochi2019}, for contructing the examples of \hyperlink{theoremA}{Theorem A} necessarily one has to break the domination inside the unstable bundle. If this were not the case, the strong unstable foliation 
of $f\circ L^n$ would be quasi-isometric and therefore by \cite{DynCoh3Torus} it would have the same growth rate as $L^n$ (and therefore the same exponent).

\smallskip

Continuing with Question 3, there is an example by Ponce-Tahzibi (\cite{PT}), where they succeed in changing the sign of the middle exponent by perturbing some specific Anosov automorphisms on $\mathbb T^3$, within the space of derived from Anosov partially hyperbolic diffeomorphisms. They do it by mixing the weak stable direction with the unstable direction, so in their construction the sum of the positive exponents actually drops. An interesting question is the following.

\medskip

\noindent\textbf{Question:}
{\it 
Does there exist a derived from Anosov diffeomorphism $f$ such that the sum of the positive Lyapunov exponents of $f$ is larger than the sum of the positive Lyapunov exponents of the linear part $L$?
}

\medskip

Let us remark that this amounts to raise the metric entropy (so the topological entropy also increases). Of course this cannot be achieved inside $\mathscr{A}_{\mu}$, nonetheless one can do this within the partially hyperbolic setting.

\begin{theoremB}\hypertarget{theoremB}{}
Let $L$ be a hyperbolic automorphism of $\Tor^4$ with the eigenvalues $\lambda_{u}>1>\lambda_{ws}>\lambda_{ms}>\lambda_{ss}$. Then there exist $n\in\Nat$ and a $C^{\oo}$ diffeomorphism $f$, isotopic to the identity and preserving the volume, such that $L^n\circ f$ is weakly partially hyperbolic, preserves the volume, and almost every point ($L^n\circ f$ may not be ergodic) has two strictly positive Lyapunov exponents, one of which is the unstable exponent $\log\lambda_{u}^n$ (the positive exponent of $L^n$). In particular the metric entropy with respect to the volume of $L^n\circ f$ is strictly greater than the metric entropy of $L^n$.

If furthermore $\lambda_{ws}\lambda_{ms}>\lambda_{ss}$, then $L^n\circ f$ can be chosen such that it is strongly partially hyperbolic.
\end{theoremB}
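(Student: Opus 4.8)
The guiding idea is to leave the strong unstable direction of $L$ \emph{literally untouched}, so that the exponent $\log\lambda_u^n$ survives at every point, and to manufacture the second positive exponent by a \emph{large}, volume preserving deformation that scrambles the stable eigendirections and destroys the domination among them. In contrast with the construction of \cite{PT}, where the unstable exponent drops, here the perturbation never feeds on $E^u_L$: the extra expansion needed to push a stable exponent above zero comes from the sheer size of the deformation, and this is exactly what passing to the iterate $L^n$ makes room for. The quantitative control on the new exponent is then delegated to Theorem~\ref{teo.tecnico}.

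\emph{Construction of $f$ and reduction to a cocycle estimate.} Write $E^u_L,E^{ws}_L,E^{ms}_L,E^{ss}_L\subset T\mathbb{T}^4$ for the parallel (hence $C^\infty$) eigenline fields of $L$ associated with $\lambda_u,\lambda_{ws},\lambda_{ms},\lambda_{ss}$, and set $E^{cs}_L:=E^{ws}_L\oplus E^{ms}_L\oplus E^{ss}_L$. I would produce $f\in\mathrm{diff}^{\infty}_{\mu}(\mathbb{T}^4)$, isotopic to the identity, whose derivative is block diagonal with respect to $T\mathbb{T}^4=E^u_L\oplus E^{cs}_L$, equals the identity on $E^u_L$, and equals a smooth map $A(\cdot)\colon\mathbb{T}^4\to\mathrm{SL}(3,\mathbb{R})$ on $E^{cs}_L$; here $A\equiv\mathrm{Id}$ outside a region of large $\mu$-measure and inside it performs, with a large parameter, a rotation-type mixing of the three lines $E^{ws}_L,E^{ms}_L,E^{ss}_L$, in the spirit of the Baraviera--Bonatti perturbation but of non-perturbative size. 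Such an $f$ is easily built as a composition of time-one maps of divergence-free vector fields tangent to $E^{cs}_L$, and $\det Df\equiv\det A\equiv 1$ gives volume preservation. Since $DL^n$ preserves both $E^u_L$ and $E^{cs}_L$, these are $D(L^n\circ f)$-invariant, $D(L^n\circ f)|_{E^u_L}\equiv\lambda_u^n$ so that $\chi^u_{L^n\circ f}\equiv\log\lambda_u^n$ at every point, and the derivative cocycle of $L^n\circ f$ restricted to $E^{cs}_L$ is $x\mapsto\mathrm{diag}(\lambda_{ws}^n,\lambda_{ms}^n,\lambda_{ss}^n)\,A(x)$ over the base $(L^n\circ f,\mu)$. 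The theorem is thus reduced to two assertions: (i) the top exponent $\eta$ of this $3$-dimensional cocycle can be made to lie in $(0,\log\lambda_u^n)$; (ii) $L^n\circ f$ is weakly (resp. strongly) partially hyperbolic.

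\emph{The exponent estimate, and why it is the crux.} Assertion (i) is precisely the situation addressed by Theorem~\ref{teo.tecnico}: a strongly contracting diagonal part composed with large fluctuations. Fixing $n$ large first (so $\lambda_u^n$ dwarfs the remaining eigenvalues and there is ample room to enlarge the deformation while keeping $\|Df\|_{C^0}$ below the threshold $(\lambda_u/\lambda_{ws})^n$ that would break the domination of $E^u_L$ over $E^{cs}_L$), and then tuning the strength of $A$, Theorem~\ref{teo.tecnico} should give $\eta>0$. Heuristically, a typical orbit meets the support of the deformation with a definite frequency $p>0$ and at each such visit receives an expansion kick of order $\log K$, where $K=\|A\|_{C^0}$, so that $\eta\gtrsim\log\lambda_{ws}^n+p\log K$, which becomes positive once $p\log K>-\log\lambda_{ws}^n$; since $K$ is allowed up to $(\lambda_u/\lambda_{ws})^n$ and $\lambda_u>1$, this reduces to a lower bound on $p$ met by taking the support of $A$ to have $\mu$-measure close to $1$. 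The remaining two exponents on $E^{cs}_L$ are then very negative (their sum with $\eta$ is fixed at $-\log\lambda_u^n$), while $\eta<\log\lambda_u^n$ follows from the chosen domination. Making all of this rigorous --- controlling the top exponent uniformly in the orientation of the transported vectors, and over every ergodic component --- is where the real work lies, and is exactly the role of Theorem~\ref{teo.tecnico}. Granting it, Pesin's entropy formula (applicable since $\mu$ is smooth and invariant, see \cite{Barreira2013}) yields $h_\mu(L^n\circ f)=\log\lambda_u^n+\eta>\log\lambda_u^n=h_\mu(L^n)$; and since $E^u_L$ is invariant at \emph{every} point, the exponents $\log\lambda_u^n$ and $\eta$ occur on every ergodic component, so ergodicity of $L^n\circ f$ is not needed.

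\emph{Partial hyperbolicity and the role of the extra hypothesis.} For the weak statement it suffices that, by the $C^0$-bound above, $E^u_L\oplus E^{cs}_L$ is a dominated splitting with $E^u_L$ uniformly expanding; hence $L^n\circ f$ is weakly partially hyperbolic. For the strong statement I would instead let $A$ act only on $E^{ws}_L\oplus E^{ms}_L$, leaving $E^{ss}_L$ invariant and untouched at rate $\lambda_{ss}^n$, and set $E^c:=E^{ws}_L\oplus E^{ms}_L$. Two requirements now pull against each other: $A$ must be large enough for the top exponent of $\mathrm{diag}(\lambda_{ws}^n,\lambda_{ms}^n)\,A(x)$ to exceed $0$ (which forces $\|A\|_{C^0}\gtrsim\lambda_{ws}^{-n}$), and small enough for $E^c$ to keep dominating $E^{ss}_L$, i.e. for the least singular value along $E^c$ to stay above $\lambda_{ss}^n$ (which forces $\|A\|_{C^0}\lesssim(\lambda_{ms}/\lambda_{ss})^n$). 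A short computation with singular values shows these two ranges overlap, for $n$ large, exactly when $\lambda_{ws}\lambda_{ms}>\lambda_{ss}$; under this hypothesis one obtains a dominated splitting $E^u_L\oplus E^c\oplus E^{ss}_L$ with $E^u_L$ uniformly expanding and $E^{ss}_L$ uniformly contracting, that is, $L^n\circ f$ is strongly partially hyperbolic. The isotopy to the identity, the volume preservation and the $C^\infty$ regularity of $f$ are built into the construction, which completes the plan.
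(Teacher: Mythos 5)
Your overall strategy is correct in spirit --- keep the unstable direction and its exponent $\log\lambda_u^n$ untouched, do a large volume-preserving mixing among the stable directions, and invoke Theorem~\ref{teo.tecnico} to push a stable exponent above zero, with the condition $\lambda_{ws}\lambda_{ms}>\lambda_{ss}$ arising exactly as the overlap of the two competing size constraints. The entropy conclusion via Pesin's formula and the observation that ergodicity is not needed are also right. But there is a genuine gap in the cocycle you propose to feed to Theorem~\ref{teo.tecnico}.

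For the \emph{weak} partially hyperbolic case you let $Df$ be an arbitrary smooth map $A(\cdot)\colon\mathbb{T}^4\to\mathrm{SL}(3,\mathbb{R})$ on the full three-dimensional bundle $E^{cs}_L$, and then write ``Assertion (i) is precisely the situation addressed by Theorem~\ref{teo.tecnico}.'' It is not: Theorem~\ref{teo.tecnico} is stated, and its entire apparatus (sectors in one-dimensional projective space, the dichotomy good/bad vector field, the separation of preimages of the bad sector) is built, exclusively for \emph{rank two} vector bundles. There is no three-dimensional version in the paper, and a direct extension would need a genuinely different notion of ``good cone'' and ``separation.'' The paper sidesteps this entirely: for both the weak and the strong statement the perturbation $f_t$ is a shear that leaves the two-dimensional plane $E^s_1:=E^{ws}\oplus E^{ms}$ invariant (and preserves the area on it), and Theorem~\ref{teo.tecnico} is applied to the rank-two cocycle $D(L^n\circ f_t)|_{E^s_1}$. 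The distinction between weak and strong partial hyperbolicity in the paper is thus not which bundle $A$ acts on --- it is whether an additional invariant strong-stable cone near $E^{ss}$ survives, i.e. whether the extra condition $\gamma_L' t\,\lambda_{ss}^n/\lambda_{ms}^n<1$ can be satisfied simultaneously with the lower bound on $t$, and this is exactly where $\lambda_{ws}\lambda_{ms}>\lambda_{ss}$ enters. Your second construction (``let $A$ act only on $E^{ws}_L\oplus E^{ms}_L$'') is in fact what you should use in \emph{both} cases; as written, your weak case is not reducible to the technical theorem you cite.

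Two smaller remarks. First, the paper's perturbation is a one-parameter shear $f_t$ (a translation along lines parallel to $\overline b_1$ by $t\sin 2\pi x\,\overline b_1$), not a compactly supported rotation-type map; the separation-of-preimages mechanism in Lemma~\ref{le:separation} and the Lipschitz-vector-field invariance in Lemma~\ref{le:lipschitz} are calibrated specifically to this shear, so if you insist on a rotation-type deformation you would have to re-derive those estimates. Second, the paper's $Df_t$ does \emph{not} literally fix $E^u_L$ --- it moves it into an invariant unstable cone --- and the unstable exponent nonetheless equals $\log\lambda_u^n$ because $Df_t$ acts unipotently on the $v_u$-component. Your choice to make $Df$ block diagonal with identity on $E^u_L$ is cleaner and also correct; just be aware this is a design choice, not forced.
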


This construction exhibits a remarkable feature: by mixing enough two stable directions one can produce an additional positive exponent. Since the perturbation needed here is fairly large, we need at least four dimensions for this construction. We expect that the construction can be made ergodic if we assume sufficient bunching ($\lambda_{ss}$ is large enough compared to $\lambda_{ms}$). Some arguments in favor are the stability of the examples explained in Appendix B, the classical results on the ergodicity of partially hyperbolic diffeomorphisms \cite{ErgPH}, and results on the density of accessibility like \cite{HS}, \cite{AV2020}; we point out however that it would be necessary to establish accessibility by $\mathcal{C}^2$ perturbations, which is, at the moment of writing, not completely known in full generality. An alternative approach to establish ergodicity, or at least to construct ergodic examples could be the methods used by Obata in \cite{Obata}. \label{ergodicidad}

\subsection{Ideas of the proofs}

The basic idea is to mix enough two stable directions of the Anosov automorphism such that the strong stable exponent actually decreases, while the weak stable exponent increases (this means that we work with $L^{-1}$ for Theorem A). The mixing is done by a one-parameter family of smooth diffeomorphisms $f_t$ which preserve the stable planes, and introduces a strong shear within these planes. This shear will create some hyperbolicity proportional with the parameter $t$ in a large ``good region''. 

This means that we gain some expansion in this good region, however there is a difficulty in the fact that the good region is not invariant, and once an orbit goes out of the good region we may loose control on the expansion which we gained. In order to deal with this problem we use ideas from \cite{NUHD}, \cite{RandomProdSt}. We consider a family of unit vector fields inside $E^s$ which are Lipschitz along $W^u$. For proper parameters $t,n$, this family is invariant under the push forward of $L^n\circ f_t$. If such a vector field is inside the expanding cone, then it gets expanded inside the good region, and most of it stays inside the expanding cone. If the vector field is not in the expanding cone, then we are able to show that a good portion of it has to go in the expanding cone after one iteration. Using these facts, some combinatorial arguments allow us to give a lower bound for the higher Lyapunov exponent inside $E^s$. The abstract argument to guarantee positivity of the exponents is isolated in Theorem \ref{teo.tecnico} with the intention that it can be used in other cases.

\subsection*{Organization of the article} In the next section we introduce the definitions and cover some preliminaries needed for the proofs. In the third section it is presented an abstract theorem that will be used later to establish the variation of Lyapunov exponents. The section four is devoted to describing the family of perturbations $f_t$ and their various properties needed for the proofs of Theorems A and B. Finally Theorem A is proved in Section 5 and Theorem B is proved in section 6. There are also several appendices where some complementary aspects are discussed; these are given to mainly to encourage the reader to investigate other properties of the examples constructed.  In Appendix A some additional considerations related to the abstract theorem are given. In Appendix B we show that the results are robust with respect to $\mathcal{C}^2$ perturbations. In Appendix C we discuss the continuity of the exponents with respect to the perturbation, while in Appendix D we also show how to increase the top exponent in the case of higher multiplicity, using the Invariance Principle \cite{AV2010}. Finally, Appendix E frames the examples of \hyperlink{theoremB}{Theorem B} into the theory of physical measures.

\subsection*{Acknowledgements} The authors thank the referee for her/his careful reading, and for various corrections and suggestions. The first author thanks the hospitality of the PUCV math department where this project was started.

\section{Preliminaries}\label{sec.preeliminaries}

In this section we collect some notations and give some references for results that will be needed. Although some of these results are valid in more generality, we will content ourselves with stating them in the context that they will be used in the article.

\subsection{Measurable partitions and conditional measures}

Let $(M,d_M)$ be a compact metric space, and denote $\BM$ its Borel $\sigma$-algebra. By a \emph{measure} on $M$ we mean a Borel probability measure. Fixing such a measure $\mu$, we say that $\xi$ is a partition of $M$ (with respect to $\mu$) if $\xi\subset\BM$ and 
\[
	\p,\p'\in \xi, \p\neq \p'\Rightarrow \mu(\p\cap \p')=0.
\]
Elements of a partition are called \emph{atoms}, and for a given partition $\xi$ and $\mu$-almost every $x\in M$, the unique atom containing $x$ is denoted by $\xi(x)$. Given $\xi_1,\xi_2$ partitions we say that $\xi_2$ is finer than $\xi_1$ if every atom of $\xi_1$ is union of atoms of $\xi_2$: in this case we write $\xi_1\leq \xi_2$. For $\xi_1,\xi_2,\ldots,\xi_k$ partitions, we write 
\[
	\bigvee_{i=0}^k \xi_i =\{\p_1\cap\p_2\cap\cdots\p_k: \p_i\in \xi_i\}.
\]
Clearly $\bigvee_{i=0}^k \xi_i$ is a partition (by measurable sets) that is finer than each $\xi_i$.

\begin{definition}
A partition $\xi$ of $M$ is called measurable if there exists a sequence $\xi_1\leq \xi_2\leq\cdots\xi_n\cdots$ of finite partitions satisfying: there exists $M_0\subset M$ of full $\mu$-measure such that for every $x\in M_0$,
\[
	\xi(x)=\lim_{n\to\oo} \xi_n(x).
\]
\end{definition}

Fix $\xi$ a measurable partition. The space of atoms of $\xi$ is denoted $M/\xi$, and it is called the \emph{factor space} of $M$ with respect to $\xi$; associated to this space one can define a \emph{factor map} $\pi:M\to M/\xi$ by $\pi(x)=\xi(x)$ for $x\in M_0$, and $\pi(x)=\xi(x_0)$ for $x\in M\setminus M_0$, where $x_0$ is some arbitrary point in M (of course, $\pi$ is not uniquely defined). The space $M/\xi$ is equipped with the largest $\sigma$-algebra making $\pi$ measurable, and we denote by $\mu_{M/\xi}=\pi_{\ast}\mu$ the induced \emph{factor measure}. For different choices of $M_0, \pi$ the resulting factor spaces are isomorphic in the sense of measure theory; we fix (arbitrarily) one of these versions to work.

It is then a result of V. Rokhlin \cite{Rokhlin} that there exist a (essentially unique) family of probability measures $\{\mc_{\p}\}_{\p \in M/\xi}$ such that for every $A\in\BM$,
\[
	\mu(A)=\int_{M/\xi} \mc_{\p}(A\cap \p)d\mu_{M/\xi}(\p), 
\]
where $\mc_{\p}$ is supported on the atom $\p$. The family $\{\mc_{\p}\}_{\p \in M/\xi}$ is said to furnish a \emph{disintegration} of $\mu$. Note that if $\xi$ is finite then $\mc_{\p}$ is just the conditional measure on $\p$. 

As a convention, if the support atom of $\mc_{\p}$ is clear from the context we omit its reference and we simply write $\mc$. 

\subsection{Cocycles and Lyapunov exponents}\label{subsec.cocyclesandlyapunov}

Let $f:M\to M$ be a continuous endomorphism of the compact metric space $M$, and suppose that $\pi:E\to M$ is a (continuous) finite rank vector bundle over $M$. For what follows it will be convenient to assume that $E$ is a sub-bundle of $\mathbb{R}^N$, for some $N$; this is no loss in generality since $E$ is a direct summand of a trivial vector bundle.  A (continuous) bundle map $A:E\to E$ is called a cocycle over $f$ provided that its base map coincides with $f$. In this case we write for $x\in M$, $A(x):E_{x}=\pi^{-1}(x)\to E_{fx}$ the corresponding linear isomorphism. We will adhere to common practice of referring to the pair $(f,A)$ as the cocycle; if needed we can consider $A(x)$ as an $N\times N$ square matrix of size equal to the rank of $E$. 

The vector bundles that we will consider have some additional structure; we will assume from now on that
\begin{itemize}
\item $E$ is $\theta$-H\"older for some $0<\theta\leq 1$ (when considered as a section of the corresponding Grassmanian of $\mathbb{R}^N$).
\item $E$ is equipped with the restriction of some Riemannian metric in $\mathbb{R}^N$
\end{itemize}

Let us say a few words about the H\"older condition. For $m\in M$ let $\mathrm{proj}_m:\mathbb{R}^N\to E_m$ be the orthogonal projection. The distance between $E_m$ and $E_m'$ is
\[
	\disG(E_m,E_{m'})=\max\left\{\norm{v-\mathrm{proj}_m(v)}, \norm{v'-\mathrm{proj}_{m'}(v')}:v\in E_m,v'\in E_{m'}, \norm{v}=\norm{v'}=1\right\}
\]
To say that $E$ is $\theta$-H\"older means that there exists some $C_E>0$ such that for every $m,m' \in M$,
\[
	\disG(E_m,E_{m'})\leq C_E\cdot d_M(m,m')^{\theta}.
\]
The total variation of $E$ is the diameter of the image of the induced section in the Grassmanian, 
\[
	\mathrm{var}(E)=\sup_{m,m'}\{\disG(E_m,E_m')\}.
\]
In this article we will only deal with bundles with small variation (arising as perturbations of constant bundles); it will be helpful for the reader to keep this in mind.

\smallskip

If $(f,A)$ is a cocycle we write $\norm{A}=\max_{x\in M}\{\norm{A(x)}\}$, where $\norm{A(x)}$ is the operator norm the linear map with respect to the metrics on $E_x,E_{fx}$.

For a cocycle $(f,A)$ and $n \in \Z$ we denote
\[
	A^{(n)}(x)=\begin{cases}	
	A(f^nx)\cdot A(f^{n-1}x)\cdots A(fx)\cdot A(x) & n\geq 0\\
	Id & n=0\\
	A^{-1}(f^{-1}x)\cdot A^{-1}(f^{-2}x)\cdots A^{-1}(f^{-n}x) & n<0\quad (\text{ provided that }f^{-1}\text{ exists}).
	\end{cases}	
\]

Let us now assume that the rank of $E$ is $2$ and that $\mu$ is an ergodic invariant measure for $f$; it is a result originally due to Furstenberg and Kesten \cite{FurstKest60} that in this case, for almost every $x\in M$ the following limits exist and do not depend on $x$:
\begin{align*}
&\chi^+=\lim_{n\to +\oo}\frac{\log\norm{A^{(n)}(x)}}{n}\\
&\chi^{-}=-\lim_{n\to +\oo}\frac{\log\norm{(A^{(n)}(x))^{-1}}}{n}.
\end{align*}
One verifies directly that the limits do not depend on the Riemannian metric chosen and $\chi^+\geq \chi^-$: these numbers are called respectively the largest and smallest Lyapunov exponents. Even more, Oseledet's theorem \cite{Oseledets} guarantees that for $\mu$ almost every $x$ and every $v\in E_x\setminus\{0\}$, $\chi(x,v)=\lim_{n\to +\oo}\frac{\log\norm{A^{(n)}(x)}}{n}$ exists and coincides with either $\chi^+$ or $\chi^{-1}$. In the case when $\chi^+\neq \chi^{-}$ one can find measurable line bundles $E^{+},E^{-}$ such that $\mu$-almost every $x$, $E_x=E^+_x\oplus E_x^{-}$, and for $v\in E^{\sigma}\setminus\{0\}, \chi(x,v)=\chi^{\sigma}, \sigma=+,-$. If $\mu$ is not ergodic then $\chi^+(x)$ and $\chi^-(x)$ are functions depending on the point $x$, however they are well defined for almost every $x$, measurable and invariant.

\subsection{Projective cocycles}
For a (finite dimensional) vector space $V$ we denote $\mathbb{P}V$ its corresponding projective space, and likewise if $\pi:E\to M$ is a (finite rank) vector bundle we denote $\mathbb{P}E=\bigsqcup_{m\in M} \mathbb{P}E_m$ the projective bundle over $M$ induced by $E$. In the case when $V$ is two dimensional $\mathbb{P}V$ can be identified by a circle: for this we assume that $V$ is equipped with an inner product and for a chosen unit length $e_1\in V$ we identify 
\[
	\mathbb{P}V\approx\{v\in  V:\norm{v}=1,\angle(v,e_1)\in [0,\pi)\}\approx \mathbb{S}^1\subset \Real^2.
\]
Note that this identification depends on $e_1$ and we equip $\mathbb{P}V$ with the distance 
\[
	\disang([v],[v'])=|\angle(v,v')|
\]
where the angle is measured with the inner product in $V$. Using this, in practice we write $v\in \mathbb{P}V$ instead of 
$[v]\in \mathbb{P}V$ (provided that $v\neq 0$, of course).  A similar discussion can be applied to the case when $\pi:E\to M$ is a rank two vector bundle.

\begin{definition}		
A subset $\Delta\subset \mathbb{P}V$ will be called a sector if it is union of finitely many intervals.
\end{definition}

Notions as \emph{closed sector}, \emph{open sector} and \emph{boundary} of the sector are self-explanatory. We note in particular that if $\Delta$ is a symmetric cone in $V$ then it defines naturally a sector in $\mathbb{P}V$, and vice-versa. If $\Delta$ is a sector its complementary sector is $\Delta^c=\mathbb{P}V\setminus\Delta$.

To compare sectors in the same projective space we use the Hausdorff distance between their closures. Observe also that if $\Delta\subset \mathbb{P}V$ is a sector and $\mathbb{P}L:\mathbb{P}V\to \mathbb{P}V'$ is induced by a linear isomorphism $L:V\to V'$, then $L^{\ast}\Delta=PL(\Delta)$ is a sector in $PV'$.

The notion of sector extends naturally to the case when $\pi:E\to M$ is a rank two vector bundle: in this case a sector is a collection $\Delta=\{\Delta_m\}_{m\in M}$ where $\Delta_m\subset \mathbb{P}E_m$ is a sector for every $m$, that depends continuously on $m$. For comparing sectors in different fibers we can use orthogonal projections as we did before, and in particular define the total variation of a sector $\Delta$ in $\mathbb{P} E$ by
\[
	\mathrm{var}(\Delta)=\sup_{m,m'\in M}\{\disH(\Delta_m,\mathrm{proj}_m^{\ast}\Delta_m')\}.
\]

\smallskip

Now given a cocycle $(f,A)$ acting on $\pi:E\to M$ we consider $(f,\PA):\mathbb{P}E=M\times \mathbb{P}\Real^1\to \mathbb{P}E$ its projectivization:
\[
	v\in \mathbb{P}E_m\Rightarrow \PA(m)\cdot v=[A(m)\cdot v]=\pm \frac{A(m)\cdot v}{\norm{A(m)\cdot v}}.
\]

We remark that for every $m\in M$ the map $\PA(m):\mathbb{P}E_m\to \mathbb{P}E_{fm}$ is Lipschitz with Lipschitz constant $\norm{A(m)}\cdot \norm{A(fm)^{-1}}$.

\begin{definition}
If $(f,A)$ is a cocycle we define
\[
	b(A)=\sup_{m\in M} \norm{A(m)}\cdot \norm{A(fm)^{-1}}
\]
\end{definition}

We finish this part noting that if $(f,A)$ is a H\"older cocycle then $(f,\PA)$ is H\"older as well.

\subsection{Vector fields on increasing partitions} We keep the previous hypotheses on $f, A$ and $\pi:E\to M$. 

We will say that a measurable partition $\xi$ of $M$ is increasing for $f$ if it satisfies for every $\p_0\in \xi$:
\begin{itemize}
	\item $f|\p_0$ is one-to-one and positive measurable;
	\item $f(\p_0)$ is a finite union of atoms of $\xi$.
\end{itemize}
We denote $\xi^1=\xi\vee f^{-1}\xi$, and for $\p\in\xi^1$ we write $g_{\p}:f(\p)\to \p$ the corresponding inverse branch of $f$.   

Fix $\xi$ increasing for $f$, and let $\p_0\in\xi$. Given $X:\p\to E$  unit length vector field, it induces a section in $\mathbb{P}E$ which we can safely denote by the same letter. Observe that for every $\p\in \xi^1|\p_0$ we can use $f$ to push forward the vector field and define $Y^1_{\p}:f(\p)\to E$ by
\[
Y^1_{\p}(m):=\PA(g_{\p}m))\cdot  X(g_{\p}m)\quad m\in f(\p)
\] 
Analogously, for a natural number $k\geq 1$ we denote $\xi^k=\bigvee_{i=0}^{k} f^{-i}\xi$. Note in this case that for every atom $\p\in \xi^k$ there exists well defined inverse of $f^k:\p\to f^k(\p) (f^k(\p)\in \xi)$, that we denote $g_{\p}^k$. We can then proceed analogously as in the case $k=1$ and define for every $\p\in\xi^k|\p_0$ a section $Y^k_{\p}:f^k(\p)\to \mathbb{P}E$ by the formula
\[
Y^k_{\p}(m):=\PA^{(k)}(g^k_{\p}(m))\cdot X(g_{\p}^km).
\]

\begin{definition}
Let $\xi$ be an increasing measurable partition for $f$. A family $\mathcal{X}$ of vector fields  over atoms of $\xi$ is said to be invariant under the cocycle $(f,A)$ if $X\in \mathcal{X}$ implies $Y^k\in \mathcal{X}$ for every $k\geq 1$.
\end{definition}


\subsection{Partial Hyperbolicity}

We suppose that $M$ is a compact (boundaryless) manifold and $f:M\to M$ is a diffeomorphism os class at least $\mathcal{C}^1$. 

\begin{definition}
$f$ is weakly partially hyperbolic if there exist a $Df$ invariant splitting $TM=E^{cs}\oplus E^u$, $\lambda>1$
and a Riemannian metric on $M$ such that for every $m\in M$, for every unit vectors $v\in E^u_m,w\in E_m^{cs}$ it holds
\begin{enumerate}
\item $\norm{D_mf(v)}\geq \lambda$\  (uniform expansion in $E^u$);
\item $\norm{D_mf^n(w)}<\frac{1}{2}\cdot \norm{D_mf^n(v)}$\   (domination between $E^{cs}$ and $E^u$).
\end{enumerate}
$f$ is partially hyperbolic if both $f$ and $f^{-1}$ are weakly partially hyperbolic.
\end{definition}

The bundles $E^{cs},E^u$  are the center-stable and unstable bundles. When $f$ is partially hyperbolic there is a further refinement $E^{cs}=E^s\oplus E^c$ into $Df$ invariant bundles called the stable and center bundles, respectively. If furthermore $E^c$ is trivial then $f$ is said to be hyperbolic or Anosov.

From the theory of partial hyperbolicity we will only use the following well known facts (Cf. \cite{HPS}): the bundle $E^u$ integrates to an $f$ invariant foliation $W^u$, the unstable foliation. If $d_u$ denotes the intrinsic distance in a given leaf of $W^u$, it holds
\[
	\forall x,y\in M, y\in W^u(x), \forall n\geq 0, d_u(f^n(x),f^n(y))\geq \lambda^n d_u(x,y).
\]  

\section{Bounding the Lyapunov exponents from below}\label{sec.detecting}

We now establish a technical Theorem that will be used to prove the existence of large positive Lyapunov exponents for cocycles that are expanding in some large but non-invariant part of the phase space. In pursue of versatility, the result is written in some general setting. The method on the proof is based on \cite{NUHD} and its subsequent refinement \cite{RandomProdSt}. 

\smallskip

For the rest of the section we consider the following data: 
\begin{itemize}
	\item $(M,d_M)$ is a compact metric space.
	\item $\mu$ is a Borel probability measure on $M$.
	\item $f:M\to M$ is a continuous endomorphism that preserves $\mu$.
	\item $\xi$ is an increasing measurable partition for $f$.
	\item $\pi:E\to M$ is a continuous two dimensional vector sub-bundle of $\mathbb{R}^N$, equipped with a Riemannian metric whose corresponding norm is denoted $\norm{\cdot}$.
	\item $(f,A)$ is a continuous cocycle, and $(f,\PA)$ is the corresponding projective cocycle.
    \item $\mathcal{X}$ is a family of unit length vector fields over atoms of $\xi$, invariant under the cocycle. 
\end{itemize}

\begin{definition}\label{def:adapted}
We say that the invariant family of vector fields $\mathcal{X}$ over the increasing partition $\xi$ is adapted to the cocycle $(f,A)$ if there exist $\beta,\delta\in(0,1), \lambda>\| A^{-1}\|^{-1}$, a partition $\mathcal{X}=\mathcal{X}^g\cup\mathcal{X}^b$ and a subset $G\subset M$ satisfying the following conditions.

	\begin{enumerate}
        \item[\referencia{item.condicion1}{\rm H1}] For each $\p_0\in\xi$ we have
   		\[
		\mc\left(\cup\left\{\p\in\xi^1|\p_0:\ \p\subset G\right\}\right)>1- \delta.
   		\]
         
       \smallskip 

       \item[\referencia{item.condicion2}{\rm H2}]  For every $\p_0\in\xi$ there exists at least one $X:\p_0\to E\in \mathcal{X}^g$.

       \smallskip 
        
       \item[\referencia{item.condicion3}{\rm H3}] If $X:\p_0\to E\in\mathcal{X}^g$ and $m\in G$ then it holds $\norm{(A(m)\cdot X(m)}\geq \lambda$.
                
       \smallskip

    	\item[\referencia{item.condicion4}{\rm H4}] If $X:\p_0\to E\in \mathcal{X}^g$, then for every $\p\in \xi^{1}|\p_0$, $\p\subset G$ it holds $Y^1_{\p}\in \mathcal{X}^g$. 

    	\smallskip	
   		    
    	\item[\referencia{item.condicion5}{\rm H5}] If $X:\p_0\to E\in\mathcal{X}^b$, then 
		\[
			\mc\left(\cup\left\{\p\in \xi^1|\p_0: Y^1_{\p}\in \mathcal{X}^g\right\}\right)> \beta.
   		\]
    \end{enumerate}	
\end{definition}

Loosely speaking, an adapted family is a collection of (unit) vector fields with some additional property (say, regularity) that is invariant under the cocycle, and morever there is a large sub-collection $\mathcal{X}^g$ of these fields (the \emph{good} ones) that are uniformly expanded under the action $(f,A)$. This sub-family is not necessarily invariant, but for a given $X\in\mathcal{X}^g$ its image under $(f,A)$ consists mostly of vectors in $\mathcal{X}^g$. This follows from the conditions \ref{item.condicion1} and \ref{item.condicion4}. The condition \ref{item.condicion1} is in fact only on the partition, the most part of every atom is in the good region $G$, and the conditions \ref{item.condicion3} and \ref{item.condicion4} say that a good vector in a good region is expanded and goes to a good vector. Finally, there is also a restitution mechanism for vector fields outside $\mathcal{X}^g$ (condition \ref{item.condicion5}).

\smallskip

The main theorem in this part is the following.

\begin{theorem}\label{teo.tecnico}
Suppose that $\mu$ is ergodic and assume that there exists an adapted family for the cocycle $A$ with $\lambda,\delta$ as in the definition \ref{def:adapted}. Then the largest Lyapunov exponent of $(f,A,\mu)$ satisfies 
\[\chi^+>\frac {\beta}{\beta+\delta}\log\frac{\lambda^{1-\delta}}{\norm{A^{-1}}^{\delta+\frac{\delta}{\beta}}}=\log\lambda-\frac{\beta\delta+\delta}{\beta+\delta}\log(\lambda\|A^{-1}\|).
\]
\end{theorem}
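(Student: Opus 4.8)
The plan is to run a stopping-time argument along orbits, tracking whether the pushed-forward vector field lies in the good sub-family $\mathcal{X}^g$ or not, and to estimate the logarithmic growth of $\|A^{(n)}(x)\cdot X(x)\|$ by bookkeeping three types of iterates: (i) iterates where the current field is good \emph{and} the base point is in $G$, which contribute at least $\log\lambda$ by \ref{item.condicion3}; (ii) iterates where the field is good but the base point is outside $G$, which contribute at least $-\log\|A^{-1}\|$; and (iii) iterates where the field is bad, which also contribute at least $-\log\|A^{-1}\|$ but are controlled in frequency by the restitution mechanism \ref{item.condicion5}. Since $\mu$ is ergodic, by the Furstenberg--Kesten/Oseledec theorem $\chi^+$ is the almost-sure exponential growth rate of $\|A^{(n)}(x)\cdot X(x)\|$ for \emph{any} fixed measurable choice of nonzero vectors (the limit is $\chi^+$ unless $X(x)$ lands in the measurable line bundle $E^-$ on a positive measure set, which for a one-parameter choice one sidesteps — or one simply bounds $\limsup\frac1n\log\|A^{(n)}(x)\|\ge\limsup\frac1n\log\|A^{(n)}(x)X(x)\|$ and uses that the first quantity equals $\chi^+$ a.e.). So it suffices to produce, for a.e.\ $x$, a vector field $X\in\mathcal{X}$ defined on $\xi(x)$ (which exists in $\mathcal{X}^g$ by \ref{item.condicion2}) with a good lower bound on its orbital growth.

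The key combinatorial step is a Kac-type / renewal estimate. Start with $X_0\in\mathcal{X}^g$ on $\xi(x)$ and define a sequence of ``good times'': having a good field at step $k$, condition \ref{item.condicion4} says that on the part of the atom inside $G$ the pushed-forward field stays good, while \ref{item.condicion1} says this part has relative conditional measure $>1-\delta$; on the exceptional part (measure $<\delta$) the field may become bad, but then \ref{item.condicion5} guarantees that within one further step a proportion $>\beta$ of it returns to $\mathcal{X}^g$. Thus one builds a random walk on the ``state'' $\{$good$\}\cup\{$bad$\}$ whose transition probabilities (measured by the nested conditional measures $\mu^\xi$ along the refining partitions $\xi^k$) satisfy: good $\to$ good with probability $>1-\delta$, good $\to$ bad with probability $<\delta$, bad $\to$ good with probability $>\beta$. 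Integrating over atoms and summing the per-step contributions, the expected fraction of ``bad'' steps is at most roughly $\delta/(\beta+\delta)\cdot(1+1/\beta)$ — more precisely one gets that in $N$ steps the number of non-expanding steps (types (ii) and (iii)) is at most about $\frac{\delta(\beta+1)}{\beta+\delta}N + o(N)$, so the number of genuinely expanding steps is at least $\frac{\beta}{\beta+\delta}N - o(N)$ (after accounting that each bad excursion costs at most one expanding step lost plus the entry step). Assembling, $\frac1N\log\|A^{(N)}X_0\|\ge \frac{\beta}{\beta+\delta}\log\lambda - \big(1-\frac{\beta}{\beta+\delta}\big)\log\|A^{-1}\|$ in the limit, and rearranging this exponent gives exactly $\log\lambda-\frac{\beta\delta+\delta}{\beta+\delta}\log(\lambda\|A^{-1}\|)$, which also equals the displayed $\frac{\beta}{\beta+\delta}\log\big(\lambda^{1-\delta}/\|A^{-1}\|^{\delta+\delta/\beta}\big)$; note $\lambda>\|A^{-1}\|^{-1}$ makes $\log(\lambda\|A^{-1}\|)>0$ so the subtracted term is a genuine (bounded) loss and the bound is $>0$ when $\delta$ is small.

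The cleanest way to make the bookkeeping rigorous is to phase it in terms of the disintegrations: for a fixed atom $\p_0\in\xi$ and a good field $X$ on it, iterate and look at the conditional measures $\mu^{\xi}$ of the sub-atoms of $\xi^k|\p_0$ on which the field is good vs.\ bad; conditions H1--H5 give recursive inequalities for the masses $g_k,b_k$ of these two sets ($g_{k+1}\ge (1-\delta)g_k+\beta b_k$, $b_{k+1}\le \delta g_k+(1-\beta)b_k$, with $g_k+b_k=1$), and one reads off $\liminf g_k\ge \beta/(\beta+\delta)$ together with the total measure of the sub-atom on which a given number of expanding steps has occurred up to time $N$. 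Then Fubini over $M/\xi$ and a Borel--Cantelli / subadditivity argument upgrades the atom-wise estimate to an almost-everywhere exponential lower bound, and ergodicity identifies the limit with $\chi^+$.

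The main obstacle I expect is \emph{not} the recursion itself but the transition from the per-atom conditional-measure estimate to a genuine pathwise lower bound on $\|A^{(n)}(x)\cdot X(x)\|$ for a.e.\ $x$: one must choose, for a.e.\ starting atom, a single vector field whose orbit actually realizes the ``mostly good'' statistics (as opposed to only knowing that a large-measure \emph{set} of images is good at each time), and this requires either a martingale/backward-iteration selection argument à la \cite{NUHD,RandomProdSt} or passing to the natural extension and choosing along inverse branches. Handling the book-entry steps correctly — each excursion into $\mathcal{X}^b$ loses one potentially-expanding step to re-enter $G$ and then has probability $\beta$ per step to recover, contributing the extra $\delta/\beta$ inside the exponent — is the delicate accounting point, and getting the constant to come out exactly as stated (rather than with a worse factor) is where care is needed.
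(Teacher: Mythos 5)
Your proposal is on essentially the same track as the paper: the recursion $g_{k+1}>(1-\delta)g_k+\beta b_k$, $b_{k+1}<\delta g_k+(1-\beta)b_k$, the stationary bound $g_k>\beta/(\beta+\delta)$, and the three-way classification of per-step contributions are exactly what the paper uses. But there are two genuine problems with the sketch as written.

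First, a computational slip that changes the constant. You assert that $\frac{\beta}{\beta+\delta}\log\lambda-\bigl(1-\frac{\beta}{\beta+\delta}\bigr)\log\|A^{-1}\|$ rearranges to the displayed bound; it does not. The displayed bound equals $\frac{\beta(1-\delta)}{\beta+\delta}\log\lambda-\frac{\delta(\beta+1)}{\beta+\delta}\log\|A^{-1}\|$, and the discrepancy is precisely $\frac{\beta\delta}{\beta+\delta}\log(\lambda\|A^{-1}\|)>0$, so your heuristic claims a strictly stronger conclusion than the hypotheses actually deliver. The missing accounting is this: when the pushed-forward field $Y^i_{\p}$ is in $\mathcal{X}^g$ on an atom, you do not gain $\log\lambda$ on the whole atom but only on the portion lying in $G$, which by \ref{item.condicion1} is a fraction $>1-\delta$; on the rest you only have the worst-case $-\log\|A^{-1}\|$. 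Hence the correct per-step expected gain for a good field is $(1-\delta)\log\lambda-\delta\log\|A^{-1}\|$ (Lemma~\ref{lem.estimativasE} in the paper), not $\log\lambda$. Tracking this extra $\delta$-loss produces the $(1-\delta)$ in the exponent of $\lambda$ and makes the constant come out exactly as stated.

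Second, the ``main obstacle'' you flag — choosing a single vector field whose orbit realizes the mostly-good statistics, via a martingale or natural-extension selection — is not actually an obstacle, and worrying about it is the wrong target. The paper's argument never produces a pathwise lower bound. Lemma~\ref{lem.Insuma} shows that the \emph{integrated} quantity $I_n(X)=\int_{\p_0}\log\|A^{(n)}X\|\,d\mu^\xi$ telescopes exactly into $\sum_{i=0}^{n-1}\sum_{\p\in\xi^i|\p_0}\mu^\xi(\p)\,E(Y^i_{\p})$, so $I_n(X)/n$ is a genuine convex combination of per-step integrals $E(Y^i_\p)$ weighted by the conditional measures of sub-atoms at time $i$; the good/bad bookkeeping lives entirely at the level of this weighted sum. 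Proposition~\ref{pro.liminf} then converts $\liminf I_n(X^\p)/n\ge C$ directly into $\chi^+\ge C$: by Oseledec, $\frac1n\log\|A^{(n)}(m)X(m)\|$ converges a.e.\ to a value $\le\chi^+$, and integrating over atoms and then over $M/\xi$ (with a dominated-convergence interchange) gives $\chi^+\ge C$. Ergodicity enters only to make $\chi^+$ constant; no Borel--Cantelli, subadditivity, or backward-branch selection is needed.
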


\smallskip

The proof of the previous theorem will rely on the Proposition \ref{pro.liminf} below. For $X:\p\to M$ vector field and $n\in \Nat$ define
\[
I_n(X):=\int_{\p}\log\norm{A^{(n)}(m)\cdot X(m)}d\mc(m)
\]

\begin{proposition}\label{pro.liminf}
Assume that there exists $C>0$ satifying: for every atom $\p\in\xi$ there exists $X^{\p}:\p\to E$ such that
\[
	\liminf_{n\to\oo} \frac{I_n(X^{\p})}{n}\geq C.
\] 
Then $\chi^{+}\geq C$.
\end{proposition}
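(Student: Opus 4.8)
The plan is to relate the quantity $I_n(X^{\p})/n$, which is an average over a single atom of the logarithmic growth of a particular vector field, to the genuine top Lyapunov exponent $\chi^+$, which by Furstenberg--Kesten and ergodicity equals $\lim_n \frac1n\log\norm{A^{(n)}(m)}$ for $\mu$-a.e.\ $m$. The obvious inequality is $\log\norm{A^{(n)}(m)\cdot X^{\p}(m)}\le \log\norm{A^{(n)}(m)}$ pointwise, so $I_n(X^{\p})\le \int_{\p}\log\norm{A^{(n)}}\,d\mc$; the task is to show the right-hand side, suitably averaged and normalized, converges to $\chi^+$. First I would introduce the function $\varphi_n(m)=\log\norm{A^{(n)}(m)}$ on $M$ and recall that $\frac1n\varphi_n\to\chi^+$ both $\mu$-a.e.\ and in $L^1(\mu)$ (the latter by the Furstenberg--Kesten/subadditive ergodic theorem, using that $\varphi_1$ is bounded since $A$ is continuous over a compact base, hence integrable). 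Then I would use the disintegration formula $\mu=\int_{M/\xi}\mc_{\p}\,d\mu_{M/\xi}$ to write $\int_M \frac1n\varphi_n\,d\mu = \int_{M/\xi}\big(\frac1n\int_{\p}\varphi_n\,d\mc_{\p}\big)\,d\mu_{M/\xi}(\p)$, and pass to the limit on the left using $L^1(\mu)$ convergence.

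The key step is to upgrade this to information about $\chi^+$ from the hypothesis on the $X^{\p}$. By hypothesis, for every atom $\p$ we have $\liminf_n \frac1n I_n(X^{\p})\ge C$; combined with $I_n(X^{\p})\le \int_{\p}\varphi_n\,d\mc_{\p}$ this gives $\liminf_n \frac1n\int_{\p}\varphi_n\,d\mc_{\p}\ge C$ for every $\p$. Now I apply Fatou's lemma (the integrands $\frac1n\int_{\p}\varphi_n d\mc_{\p}$ are bounded below by $-\log\norm{A^{-1}}$ uniformly, so a shifted version is nonnegative and Fatou applies) to get
\[
\chi^+ = \lim_{n\to\oo}\int_M \tfrac1n\varphi_n\,d\mu = \lim_{n\to\oo}\int_{M/\xi}\tfrac1n\!\int_{\p}\varphi_n\,d\mc_{\p}\,d\mu_{M/\xi}(\p)\ \ge\ \int_{M/\xi}\liminf_{n\to\oo}\tfrac1n\!\int_{\p}\varphi_n\,d\mc_{\p}\,d\mu_{M/\xi}(\p)\ \ge\ C,
\]
since the inner $\liminf$ is $\ge C$ for $\mu_{M/\xi}$-a.e.\ $\p$ and $\mu_{M/\xi}$ is a probability measure.

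The main obstacle I anticipate is the interchange of limit and integral in the two places it is used: on the left one needs the $L^1(\mu)$ (not merely a.e.) convergence of $\frac1n\varphi_n$ to the constant $\chi^+$, which is exactly the content of the Furstenberg--Kesten theorem (or Kingman's subadditive ergodic theorem applied to the subadditive sequence $\varphi_{n}$, using $\varphi_{n+k}(m)\le \varphi_n(f^k m)+\varphi_k(m)$ and integrability of $\varphi_1$); on the right one needs a uniform lower bound to justify Fatou, which is supplied by $\norm{A^{(n)}(m)}\ge \norm{(A^{(n)}(m))^{-1}}^{-1}\ge \norm{A^{-1}}^{-n}$, i.e.\ $\varphi_n\ge -n\log\norm{A^{-1}}$, so $\frac1n\varphi_n + \log\norm{A^{-1}}\ge 0$. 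With these two points in place the argument is just assembling the chain of inequalities above. (One should also note that the hypothesis is assumed for \emph{every} atom, so there is no measurability subtlety in the integrand $\p\mapsto \liminf_n\frac1n\int_{\p}\varphi_n d\mc_{\p}$ beyond the standard measurable dependence of conditional expectations, and in any case one only needs the bound $\mu_{M/\xi}$-a.e.)
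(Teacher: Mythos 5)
Your argument is correct, and it takes a somewhat different route from the paper's. The paper glues the $X^{\p}$ into a measurable vector field $X$ on $M$, applies Oseledets' theorem to obtain the pointwise bound $\chi^+\ge\lim_n\frac1n\log\norm{A^{(n)}(m)\cdot X(m)}$ for $\mu$-a.e.\ $m$, integrates, disintegrates along $\xi$, and interchanges the limit with the inner conditional integral by a (tacitly used) bounded-convergence argument, which is where the hypothesis $\liminf_n I_n(X^{\p})/n\ge C$ enters. You instead discard the vector field early via the trivial bound $\log\norm{A^{(n)}(m)\cdot X^{\p}(m)}\le\varphi_n(m):=\log\norm{A^{(n)}(m)}$, transfer the hypothesis to $\liminf_n\frac1n\int_{\p}\varphi_n\,d\mc_{\p}\ge C$, and then combine the Kingman/Furstenberg--Kesten identity $\chi^+=\lim_n\frac1n\int_M\varphi_n\,d\mu$ with Fatou's lemma on the factor space (the uniform lower bound $\frac1n\varphi_n\ge-\log\norm{A^{-1}}$ makes Fatou applicable). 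Your version avoids invoking the full Oseledets measurable splitting — the subadditive ergodic theorem suffices — and replaces dominated/bounded convergence by the one-sided Fatou inequality, which is all that is needed; the paper's version keeps the specific vector field and thus carries slightly more information (an exact chain of equalities up to the last step), at the cost of the pointwise Oseledets input. Both are complete proofs of the statement.
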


\begin{proof}
Denote by $X$ the (measurable) vector field obtained by gluing all the $X^{\p}$. Since $\mu$ is ergodic, we get that
for $\mu-a.e.(m)$,
\[
	\chi^+\geq \lim_{n\to\oo}\frac{\log\norm{A^{(n)}(m)\cdot X(m)}}{n}
\] 
and thus,
\begin{align*}
	\chi^+&\geq \int \lim_{n\to\oo}\frac{\log\norm{A^{(n)}(m)\cdot X_m}}{n}d\mu(m)=\int_{M/\xi} \Big(\int_{\xi} \lim_{n\to\oo}\frac{\log\norm{A^{(n)}(m)\cdot X_m}}{n}  d\mc(m)\Big)d\mu_{M/\xi}(\xi)\\
	& =\int_{M/\xi}\Big(\lim_{n\to\oo}\frac{1}{n}\int_{\xi}\log\norm{A^{(n)}(m)\cdot X_m} d\mc(m)\Big)d\mu_{M/\xi}(\xi)\geq \int_{M/\xi} Cd\mu_{M/\xi}(\xi)=C.
\end{align*}
\end{proof}

We use this Proposition in conjunction with the following Lemma.

\begin{lemma}\label{lem.Insuma}
Let $X:\p_0\to E$ be a unit vector field. Then for every $n\in\mathbb{N}$ we can write
	\[
	I_n(X)=\sum_{i=0}^{n-1}\sum_{\p\in \xi^i|\p_0}\mc(\p)\int_{f^i(\p)} \log\norm{A(m)\cdot Y^i_{\p}(m)}\mc(m)
	\]
	where $Y^i_{\p}$ are the unit vector field obtained by pushing forward $X$.
\end{lemma}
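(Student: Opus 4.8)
The statement of Lemma \ref{lem.Insuma} is a telescoping/unfolding identity for $I_n(X)=\int_{\p_0}\log\norm{A^{(n)}(m)\cdot X(m)}\,d\mc(m)$. The plan is to decompose the orbit segment $m, f(m),\dots, f^{n-1}(m)$ multiplicatively, convert the integral over $\p_0$ into a sum of integrals over the atoms of the refined partitions $\xi^i|\p_0$ (pushed forward by $f^i$), and recognize each resulting summand as an integral of $\log\norm{A(m)\cdot Y^i_{\p}(m)}$ against the conditional measure on $f^i(\p)$.

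First I would use the cocycle relation: for a unit vector field $X$ over $\p_0$ and $m\in\p_0$, writing $X^{(i)}(m):=\PA^{(i)}(m)\cdot X(m)$ for the (projectivized, unit-length) pushforward direction, one has the multiplicative identity
\[
\norm{A^{(n)}(m)\cdot X(m)}=\prod_{i=0}^{n-1}\norm{A(f^i m)\cdot X^{(i)}(m)},
\]
because $A^{(n)}(m)=A(f^{n-1}m)\cdots A(f m)A(m)$ and, after normalizing at each step, the growth factor at step $i$ is exactly $\norm{A(f^i m)\cdot X^{(i)}(m)}$ where $X^{(i)}(m)$ is the unit vector in the direction of $A^{(i)}(m)\cdot X(m)$. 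Taking logs turns this into the sum $\log\norm{A^{(n)}(m)\cdot X(m)}=\sum_{i=0}^{n-1}\log\norm{A(f^i m)\cdot X^{(i)}(m)}$. Integrating over $\p_0$ against $\mc$ and swapping sum and integral (finite sum, so no issue) gives $I_n(X)=\sum_{i=0}^{n-1}\int_{\p_0}\log\norm{A(f^i m)\cdot X^{(i)}(m)}\,d\mc(m)$.

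Next, for each fixed $i$, I would push the $i$-th integral forward under $f^i$. Since $\xi$ is increasing for $f$, the atom $\p_0$ splits (mod $\mu$) into the atoms $\p\in\xi^i|\p_0$, on each of which $f^i$ is injective with $f^i(\p)\in\xi$; let $g^i_\p$ be the inverse branch. The key measure-theoretic input is the change-of-variables formula for conditional measures under an increasing partition: disintegrating $\mc_{\p_0}$ over $\xi^i|\p_0$ and using that $(f^i)_*(\mc_{\p_0}|_{\p}/\mc_{\p_0}(\p))=\mc_{f^i(\p)}$ (this is the standard property relating the Rokhlin disintegrations of $\mu$ and $f^i_*\mu=\mu$ along an increasing partition; it is what makes the families $Y^i_\p$ well-defined as in the "Vector fields on increasing partitions" subsection). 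Under this substitution $m=g^i_\p(m')$, for $m'\in f^i(\p)$ we have $X^{(i)}(g^i_\p m')=Y^i_\p(m')$ by the very definition of $Y^i_\p$, and $f^i m = m'$, so
\[
\int_{\p_0}\log\norm{A(f^i m)\cdot X^{(i)}(m)}\,d\mc(m)=\sum_{\p\in\xi^i|\p_0}\mc(\p)\int_{f^i(\p)}\log\norm{A(m')\cdot Y^i_\p(m')}\,d\mc(m').
\]
Summing over $i=0,\dots,n-1$ yields exactly the claimed formula. (The $i=0$ term is $\int_{\p_0}\log\norm{A(m)\cdot X(m)}\,d\mc$, consistent with $\xi^0=\xi$, $Y^0_{\p_0}=X$.)

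The main obstacle is the bookkeeping around the conditional measures: one must be careful that the disintegration of $\mc_{\p_0}$ along the finer partition $\xi^i|\p_0$ has conditionals that, after transport by $f^i$, coincide $\mu_{M/\xi}$-a.e.\ with the $\mc_{f^i(\p)}$, and that the weights in this disintegration are $\mc_{\p_0}(\p)$. This is precisely the compatibility of Rokhlin disintegrations with the $f$-invariance of $\mu$ on an increasing partition (uniqueness of disintegration plus $f^i(\p)\in\xi$, $f^i|_\p$ injective). Everything else — the multiplicative cocycle identity, the finite interchange of sum and integral, and the identification $X^{(i)}\circ g^i_\p=Y^i_\p$ — is routine and follows directly from the definitions collected in Section \ref{sec.preeliminaries}.
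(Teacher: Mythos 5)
Your proof is correct and rests on the same two ingredients as the paper's: the multiplicative (telescoping) decomposition of $\norm{A^{(n)}(m)\cdot X(m)}$ and the compatibility $(f^i)_*\bigl(\mc_{\p_0}|_\p\bigr)=\mc_{\p_0}(\p)\,\mc_{f^i(\p)}$ for the conditional measures along an increasing partition. The only difference is presentational: you write out the full product and push forward each term at once, whereas the paper organizes the same computation as an induction on $n$ (peeling off the last factor $A(f^n m)$ from $A^{(n+1)}(m)$ at each step) — this is a matter of bookkeeping rather than a genuinely different argument.
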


\begin{proof}
	We proceed by induction in $n$. For $n=1$ is just the definition, so assuming the claim for $n$, we compute for $n+1$
	\begin{align*}
	&I_{n+1}(X)=\int_{\p_0} \log\norm{A^{(n+1)}(m)\cdot X(m)}d\mc(m)=\int_{\p_0} \log\norm{A(f^nm)\cdot A^{(n)}(m)\cdot X(m)}d\mc(m)\\
	&=\int_{\p_0} \log\norm{A(f^nm)\cdot \frac{A^{(n)}(m)\cdot X(m)}{\norm{A^{(n)}(m)\cdot X(m)}}}d\mc(m)+I_n(X)=\\
	&\int_{\p_0} \log\norm{A(f^nm)\cdot \frac{A^{(n)}(m)\cdot X(m)}{\norm{A^{(n)}(m)\cdot X(m)}}}d\mc(m)+\cramped{\sum_{i=0}^{n-1}\sum_{\scriptscriptstyle \p\in \xi^i|\xi(p_0)}}\mc(\p)\int_{f^i(\p)} \log\norm{A(m)\cdot Y^i_{\p}(m)}\mc(m)
	\end{align*}
	so it suffices to study the integral above. Since $\mu^{\xi^{n+1}}|\p$ is just the conditional measure of $\mc$ in this atom, we can write
	\begin{align*}
	&\int_{\p_0} \log\norm{A(f^nm)\cdot \frac{A^{(n)}(m)\cdot X(m)}{\norm{A^{(n)}(m)\cdot X(m)}}}d\mc(m)=\sum_{\p\in \xi^{n+1}|\p_0}
	\int_{\p} \log\norm{A(f^nm)\cdot \frac{A^{(n)}(m)\cdot X(m)}{\norm{A^{(n)}(m)\cdot X(m)}}}d\mc(m)\\
	&=\sum_{\mathclap{\p\in \xi^{n+1}|\p_0}} \mc(\p)\int_{\p} \log\norm{A(f^nm)\cdot Y^{n+1}_{\p}(f^{n}m)}d\mu^{\xi^{n+1}}
	(m)
	\end{align*}
	where 
	\begin{align*}
	Y^{n+1}_{\p}(m')=\frac{A^{(n)}(g^{n+1}_{\p}m')\cdot X(g^{n}_{\p}m')}{\norm{A^{(n)}(g^{n}_{\p}m')\cdot X(g^{n}_{\p}m')}}\quad m'\in f^n(\p).
	\end{align*} 
	Since $f_{\ast}^n\mu^{\xi^{n+1}}|\p=\mu^{\xi}|f^{n+1}(\p)$ we end up getting
	\begin{multline*}
	\int_{\p_0} \log\norm{A(f^nm)\cdot \frac{A^{(n)}(m)\cdot X(m)}{\norm{A^{(n)}(m)\cdot X(m)}}}d\mc(m)=
	\sum_{\mathclap{\p\in \xi^{n+1}|\p_0}} \mc(\p)\int_{f^{n}(\p)} \log\norm{A(fm)\cdot Y^{n+1}_{\p}(m)}d\mc(m)
	\end{multline*}
	thus concluding the induction step.
\end{proof}

\begin{definition}
	For a unit vector field $X:\p\to E$  we write
	\[E(X):=\int_{\p} \log\norm{A(m)\cdot X(m)}d\mc(m)
	\]
\end{definition}
Note that the previous lemma shows that if $X\in\mathcal{X}$ and $\p\in\xi$ then for every $n\geq 1$ the quantity $\frac{I_n(X)}{n}$ can be written
as a convex combination of $\{E(Y)\}_{Y\in \mathcal{F}_n(X)}$ where $\mathcal{F}_n(X)\subset \mathcal{X}$ is finite. 

\smallskip

The following is immediate from hypotheses \ref{item.condicion1}, \ref{item.condicion3}.

\begin{lemma}\label{lem.estimativasE}
For $X:\p\to E\in\mathcal{X}$ it holds
\begin{itemize}
\item $E(X)\geq -\log\norm{A^{-1}}$.
\item If furthermore $X\in \mathcal{X}^g$ then $E(X)\geq (1-\delta)\log\lambda-\delta\log \norm{A^{-1}}$.
\end{itemize}
\end{lemma}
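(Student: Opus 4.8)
The plan is to reduce both inequalities to pointwise lower bounds for the integrand $\log\norm{A(m)\cdot X(m)}$ on the atom $\p$, and then integrate against $\mc$, which is a probability measure on $\p$ by Rokhlin's disintegration theorem. For the first item I would use the elementary estimate that for every $m$ and every unit vector $v\in E_m$ one has $\norm{A(m)\cdot v}\geq\min_{\norm{w}=1}\norm{A(m)\cdot w}=\norm{A(m)^{-1}}^{-1}\geq\norm{A^{-1}}^{-1}$, the last step being the definition $\norm{A^{-1}}=\max_x\norm{A(x)^{-1}}$. Hence $\log\norm{A(m)\cdot X(m)}\geq-\log\norm{A^{-1}}$ for every $m\in\p$, and integrating over the probability measure $\mc$ gives $E(X)\geq-\log\norm{A^{-1}}$.

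For the second item, fix $X:\p_0\to E\in\mathcal{X}^g$, where $\p_0$ is necessarily an atom of $\xi$. By hypothesis \ref{item.condicion1} the set $G_{\p_0}:=\bigcup\{\p\in\xi^1|\p_0:\p\subset G\}$ is contained in $\p_0\cap G$ and satisfies $\mc(G_{\p_0})>1-\delta$, so $\epsilon:=\mc(\p_0\setminus G)<\delta$. On $\p_0\cap G$ hypothesis \ref{item.condicion3} gives $\log\norm{A(m)\cdot X(m)}\geq\log\lambda$, while on the rest of $\p_0$ we use the bound from the first item. Splitting the integral accordingly,
\[
E(X)\;\geq\;(1-\epsilon)\log\lambda-\epsilon\log\norm{A^{-1}}\;=\;\log\lambda-\epsilon\log\bigl(\lambda\norm{A^{-1}}\bigr).
\]
Since an adapted family requires $\lambda>\norm{A^{-1}}^{-1}$, we have $\lambda\norm{A^{-1}}>1$, hence $\log(\lambda\norm{A^{-1}})>0$; replacing $\epsilon<\delta$ by $\delta$ can only decrease the right-hand side, so $E(X)\geq\log\lambda-\delta\log(\lambda\norm{A^{-1}})=(1-\delta)\log\lambda-\delta\log\norm{A^{-1}}$, as claimed.

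There is essentially no obstacle here — the statement is, as noted, immediate from \ref{item.condicion1} and \ref{item.condicion3} — and the only point deserving attention is the sign bookkeeping in the last display: one must first combine the two pointwise bounds into the single quantity $\log\lambda-\epsilon\log(\lambda\norm{A^{-1}})$, and only then invoke $\epsilon<\delta$, because it is $\log(\lambda\norm{A^{-1}})$, rather than $\log\lambda$ in isolation, that the standing hypothesis $\lambda>\norm{A^{-1}}^{-1}$ guarantees to be positive (the argument must be valid whether or not $\lambda\geq 1$).
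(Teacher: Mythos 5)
Your proof is correct and is exactly the argument the paper has in mind — the paper simply declares the lemma ``immediate from \ref{item.condicion1}, \ref{item.condicion3}'' and your write-up supplies the splitting of the integral over $\p_0\cap G$ and its complement, plus the (genuinely necessary) observation that one must pass from $\epsilon<\delta$ to $\delta$ via the positivity of $\log(\lambda\norm{A^{-1}})$ rather than of $\log\lambda$ alone.
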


\begin{definition}
	For $X:\p_0\to E\in\mathcal{X}$ we denote
	\begin{align}
	&g_n=g_n(X)=\mc\left(\bigcup\{\p\in \xi^{n}|\p_0: Y^n_{\p}:f^n(\p)\to E\in\mathcal{X}^g\}\right)\\
	&b_n=b_n(X)=\mc\left(\bigcup\{\p\in \xi^{n}|\p_0: Y^n_{\p}:f^n(\p)\to E\in\mathcal{X}^b\right)	
	\end{align}	
\end{definition}

By hypotheses \ref{item.condicion4},\ref{item.condicion5} we deduce directly.

\begin{lemma}
Assume that $X:\p_0\to E\in \mathcal{X}^g$. Then or every $n\geq 0$,
\begin{align*}
&g_{n+1}> (1-\delta)\cdot g_n+\beta\cdot b_n	\\
&b_{n+1}< \delta\cdot g_n+(1-\beta)\cdot b_n	
\end{align*}	
\end{lemma}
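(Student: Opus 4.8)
The plan is to deduce both inequalities from a single counting formula for $g_{n+1}$ together with the identity $g_n+b_n=1$. I first record the combinatorial relation between the partitions. Fix $\p_0\in\xi$; since $\xi^{n+1}=\xi^n\vee f^{-n}\xi^1$, every atom $\p'\in\xi^{n+1}|\p_0$ is contained in a unique atom $\p\in\xi^n|\p_0$, and $\mathsf q:=f^n(\p')$ is an atom of $\xi^1|f^n(\p)$ (recall that $f^n(\p)$ is itself an atom of $\xi$). Unwinding the definitions of the pushed-forward fields, $Y^{n+1}_{\p'}$ is precisely the one-step pushforward of $Y^n_\p$ over $\mathsf q$, that is $Y^{n+1}_{\p'}=(Y^n_\p)^1_{\mathsf q}$; and since the conditionals of $\mc$ along $\xi^n|\p_0$ transform under $f^n$ into the disintegration measures $\mu^\xi_{f^n(\p)}$ (the identity about positive measurable inverse branches already used in the proof of Lemma \ref{lem.Insuma}), one has $\mc(\p')=\mc(\p)\cdot\mu^\xi_{f^n(\p)}(\mathsf q)$. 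In particular $\sum_{\p\in\xi^n|\p_0}\mc(\p)=1$, and because $\mathcal{X}=\mathcal{X}^g\cup\mathcal{X}^b$ is a partition and $\mathcal{X}$ is invariant under the cocycle, each $Y^n_\p$ belongs to exactly one of $\mathcal{X}^g,\mathcal{X}^b$; hence $g_n+b_n=1$ for all $n$.

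Next I would group the atoms of $\xi^{n+1}|\p_0$ by their parent in $\xi^n|\p_0$, obtaining
\[
g_{n+1}=\sum_{\p\in\xi^n|\p_0}\mc(\p)\sum_{\substack{\mathsf q\in\xi^1|f^n(\p)\\ (Y^n_\p)^1_{\mathsf q}\in\mathcal{X}^g}}\mu^\xi_{f^n(\p)}(\mathsf q),
\]
and estimate the inner sum according to the type of $Y^n_\p$. If $Y^n_\p\in\mathcal{X}^g$, then by \ref{item.condicion4} every $\mathsf q\subset G$ already qualifies, and by \ref{item.condicion1} those have total $\mu^\xi_{f^n(\p)}$-mass $>1-\delta$; if $Y^n_\p\in\mathcal{X}^b$, then by \ref{item.condicion5} the inner sum is $>\beta$. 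Splitting the outer sum into its good part (total mass $g_n$) and its bad part (total mass $b_n$) yields $g_{n+1}>(1-\delta)g_n+\beta b_n$; the strictness is genuine since for $X\in\mathcal{X}^g$ we have $g_0=1$, and then $g_{m+1}>(1-\delta)g_m$ forces $g_n>0$ for every $n$. Finally, from $g_{n+1}+b_{n+1}=1=g_n+b_n$,
\[
b_{n+1}=1-g_{n+1}<(g_n+b_n)-(1-\delta)g_n-\beta b_n=\delta g_n+(1-\beta)b_n,
\]
which is the second inequality.

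The only genuine work is in the first paragraph: checking that $Y^{n+1}_{\p'}$ is exactly the one-step pushforward of $Y^n_\p$ (a direct composition computation, from $A^{(n+1)}(x)=A(f^nx)\cdot A^{(n)}(x)$ together with the nesting of the inverse branches), and that the conditional measures transform as stated under $f^n$ (a standard disintegration argument relying on $f$ being one-to-one and positive measurable on atoms of $\xi$). Granting these, the two recursions are exactly the elementary bookkeeping displayed above — which is why the statement can be recorded as an immediate consequence of \ref{item.condicion4} and \ref{item.condicion5}.
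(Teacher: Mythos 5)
Your argument is essentially the paper's own proof, just written out in greater detail: both group the atoms of $\xi^{n+1}|\p_0$ by their parent atom in $\xi^n|\p_0$, use \ref{item.condicion1} together with \ref{item.condicion4} to bound the good-to-good mass by $(1-\delta)g_n$, use \ref{item.condicion5} for the bad-to-good mass $\beta b_n$, and then derive the $b$-inequality from $g_n+b_n=1$. Your extra care in spelling out the pushforward identity $Y^{n+1}_{\p'}=(Y^n_\p)^1_{\mathsf q}$ and the transformation of conditional measures under $f^n$ makes the bookkeeping explicit, but it is the same route.
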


\begin{proof}	
This is established by induction. The base case will follow from the inductive argument together with the hypothesis that $X\in\mathcal{X}^g$. Fix then $n$ and note that as a consequence of \ref{item.condicion4},\ref{item.condicion5}, each atom $\p\in\xi^n|$ for which $Y^n_{\p}:f^n(\p)\to E$ is in $\mathcal{X}^g$ can be partitioned into atoms $\p'$ of $\xi^{n+1}$ such that
\[
	\mc\left(\p'\in\xi^{n+1}|\p:Y^n_{\p}:f^{n+1}(\p')\to E\in\mathcal{X}^b\right)< \delta.
\]
This implies that the contribution of $g_n$ to $g_{n+1}$ is $\geq 1-\delta$. On the other hand, considering a worse case scenario, each atom $\p\in \xi^n$ for which $Y^n_{\p}:f^n(\p)\to E$ is in $\mathcal{X}^b$ contributes to $g_{n+1}$ by an porcentage of at least $\beta$ of its $\mc$ measure. Altogether,
\[
g_{n+1}> (1-\delta)\cdot g_n+\beta\cdot b_n.
\]
The inequality for $b_{n+1}$ follows from this one.
\end{proof}

\begin{corollary}\label{cor.proporcionbuenosvsmalos}		
Assume that $X:\p_0\to E\in\mathcal{X}^g$. Then for every $n$ it holds
$$
g_n>\frac {\beta}{\beta+\delta},\ \ \ b_n<\frac{\delta}{\beta+\delta}.
$$
\end{corollary}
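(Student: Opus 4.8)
The plan is to prove Corollary \ref{cor.proporcionbuenosvsmalos} by induction on $n$, using the recursive inequalities for $g_n,b_n$ from the previous lemma together with the normalization $g_n+b_n=1$ (which holds since every atom of $\xi^n|\p_0$ carries either a good or a bad pushed-forward vector field, the two cases being complementary). The key observation is that $\tfrac{\beta}{\beta+\delta}$ is exactly the fixed point of the affine map governing the evolution of $g_n$: if $g_n>\tfrac{\beta}{\beta+\delta}$ and $b_n<\tfrac{\delta}{\beta+\delta}$, then substituting $b_n=1-g_n$ into $g_{n+1}>(1-\delta)g_n+\beta b_n$ gives $g_{n+1}>(1-\delta-\beta)g_n+\beta$, and since $1-\delta-\beta\in(-1,1)$ one checks that the affine function $x\mapsto (1-\delta-\beta)x+\beta$ maps the interval $\bigl(\tfrac{\beta}{\beta+\delta},1\bigr]$ into itself (its value at $x=\tfrac{\beta}{\beta+\delta}$ is precisely $\tfrac{\beta}{\beta+\delta}$, and it is increasing in $x$ when $1-\delta-\beta\geq 0$; when $1-\delta-\beta<0$ one instead uses that the value at $x=1$ is $1-\delta>\tfrac{\beta}{\beta+\delta}$ and monotonicity the other way, so in both cases the image of the interval stays above the fixed point).

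First I would establish the base case $n=0$: since $X\in\mathcal{X}^g$ we have $g_0=1>\tfrac{\beta}{\beta+\delta}$ and $b_0=0<\tfrac{\delta}{\beta+\delta}$, using $\beta,\delta\in(0,1)$ so that $\tfrac{\beta}{\beta+\delta}<1$. Then, assuming the bounds hold at stage $n$, I would feed $b_n=1-g_n$ into the inequality $g_{n+1}>(1-\delta)g_n+\beta b_n=(1-\delta)g_n+\beta(1-g_n)=(1-\delta-\beta)g_n+\beta$, and conclude $g_{n+1}>\tfrac{\beta}{\beta+\delta}$ by the monotonicity/interval-invariance discussion above, handling the two sign cases for $1-\delta-\beta$ as indicated. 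The complementary bound $b_{n+1}<\tfrac{\delta}{\beta+\delta}$ then follows immediately from $b_{n+1}=1-g_{n+1}$ (or directly from the second recursive inequality in the same way).

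The only mild subtlety — and the step I expect to require the most care — is verifying that $g_n+b_n=1$ exactly (rather than merely $g_n+b_n\leq 1$), since the recursive inequalities are strict and one wants the substitution $b_n=1-g_n$ to be legitimate; this is fine because for any $X\in\mathcal{X}$ the partition $\mathcal{X}=\mathcal{X}^g\cup\mathcal{X}^b$ induces, via $\p\mapsto Y^n_{\p}$, a partition mod $\mu$ of the atom $\p_0$ into the two measurable sets defining $g_n$ and $b_n$, so their $\mc$-measures sum to $1$. One also has to be slightly attentive to the fact that the recursion in the preceding lemma is stated only for $X\in\mathcal{X}^g$, which is exactly the hypothesis of the corollary, so the induction never leaves this class of starting fields. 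Beyond that, the argument is a routine one-dimensional affine fixed-point estimate.
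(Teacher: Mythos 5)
Your overall approach — induction using the recursive inequality $g_{n+1} > (1-\delta-\beta)g_n + \beta$ obtained from the previous lemma by substituting $b_n = 1-g_n$, together with the observation that $\tfrac{\beta}{\beta+\delta}$ is the fixed point of the affine map — is exactly the paper's argument, and in the increasing case ($1-\delta-\beta\geq 0$) your verification is correct and complete.

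However, your treatment of the case $1-\delta-\beta < 0$ contains a genuine error. You assert that the value of the affine map at $x=1$ is $1-\delta > \tfrac{\beta}{\beta+\delta}$. Clearing denominators, $1-\delta>\tfrac{\beta}{\beta+\delta}$ is equivalent to $(1-\delta)(\beta+\delta)>\beta$, i.e.\ to $\delta(1-\beta-\delta)>0$, i.e.\ (since $\delta>0$) to $1-\beta-\delta>0$ — precisely the \emph{opposite} of the case you are trying to handle. In fact, when $1-\delta-\beta<0$ the affine map $h(x)=(1-\delta-\beta)x+\beta$ is decreasing and sends the interval $\bigl(\tfrac{\beta}{\beta+\delta},1\bigr]$ onto $\bigl[1-\delta,\tfrac{\beta}{\beta+\delta}\bigr)$, which lies entirely \emph{below} the fixed point, so the interval-invariance claim fails and the induction step does not close. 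Indeed, in that regime the recursion only guarantees $g_1>1-\delta$ with $1-\delta<\tfrac{\beta}{\beta+\delta}$, so the conclusion of the corollary for $n=1$ cannot be extracted from the recursion at all.

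That said, the paper's own proof is equally terse ("follows by induction") and is tacitly carried out under $1-\beta-\delta\geq 0$; in every application in the paper one takes $\beta=\tfrac13$ and $\delta$ small, so $\beta+\delta<1$. The right fix is to drop the spurious handling of $1-\delta-\beta<0$ and either record the standing hypothesis $\beta+\delta\leq1$ or note that it holds in the cases of interest. With that caveat, your proof matches the paper's. Your side remark about why $g_n+b_n=1$ exactly is fine and is a point the paper leaves implicit.
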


\begin{proof}
Since $X\in \mathcal{X}^g$, $g_0=1$. The previous Lemma shows that
$$g_{n+1}> (1-\delta)\cdot g_n,+\beta\cdot b_n=\left(1-\beta-\delta\right)g_n+\beta.$$
The first inequality follows by induction, and the second one is just because $b_n=1-g_n$.
\end{proof}	

\smallskip

We are now ready to prove Theorem \ref{teo.tecnico}. Start with a vector field $X:\p_0\to E\in \mathcal{X}^g$ and use Lemma \ref{lem.Insuma} to write

\begin{align*}
	\frac{I_n(X)}{n}&=\frac{1}{n}\sum_{i=0}^{n-1}\sum_{\p\in\xi^i|\p_0} \mc(\p)\cdot E(Y^i_{\p})
	=\frac{1}{n}\sum_{i=0}^{n-1}\Big(\sum_{\mathclap{\substack{\p\in\xi^i|\p_0\\ Y^i_{\p}\in\mathcal X^g}}} \mc(\p)\cdot E(Y^i_{\p})+\sum_{\mathclap{\substack{\p\in\xi^i|\p_0\\ Y^i_{\p}\in\mathcal X^b}}} \mc(\p)\cdot E(Y^i_{\p})\Big)\\
	&\geq \frac{1}{n}\sum_{i=0}^{n-1} g_i((1-\delta)\log\lambda-\delta\log \norm{A^{-1}})- b_i\log \norm{A^{-1}}\quad \text{ by Lemma }\ref{lem.estimativasE}\\
	&=\frac{1}{n}\sum_{i=0}^{n-1 }g_i((1-\delta)\log\lambda-\delta\log \norm{A^{-1}})- (1-g_i)\log \norm{A^{-1}}\\
	&=\frac{1}{n}\sum_{i=0}^{n-1 }g_i(1-\delta)(\log\lambda+\log\norm{A^{-1}})-\log\norm{A^{-1}}\\
	&\geq \frac{1}{n}\sum_{i=0}^{n-1} \frac {\beta}{\beta+\delta} (1-\delta)(\log\lambda+\log\norm{A^{-1}})-\log\norm{A^{-1}}\quad \text{ by Corollary }\ref{cor.proporcionbuenosvsmalos}\\
	&\geq \frac {\beta}{\beta+\delta}\log\frac{\lambda^{1-\delta}}{\norm{A^{-1}}^{\delta+\frac{\delta}{\beta}}}.
\end{align*}	
Theorem \ref{teo.tecnico} is then consequence of Proposition \ref{pro.liminf}.

\begin{remark}\label{rem.nonergodic}
The ergodicity of $\mu$ is used only in Proposition \ref{pro.liminf}. Without the ergodicity assumption on $\mu$ one only gets the same lower bound for $\int_P\chi^+(m)d\mu^{\xi}(m)$, for every atom $P$ of $\xi$. If $\xi$ is generating it is easy to see that one gets the same lower bound for the Lyapunov exponent $\chi^{+}(m)$ for $\mu$-a.e.$(m)$.
\end{remark}

\begin{remark}
Theorem \ref{teo.tecnico} is useful in the situation where there is an expanding cone for the cocycle on a large portion of the space, but that is not necessarily invariant; this is exploited in \cite{NUHD} and \cite{RandomProdSt}. In the Appendix we show how to deduce the existence of an adapted family in such case.  
\end{remark}

\section{Perturbing the Anosov maps: preparations for the proofs of Theorem A and Theorem B}
\label{sec:prelim}

We start the section with several considerations on the Anosov automorphism $L$ of $\mathbb T^3$. It is more convenient to work with an automorphism $L$ which has one unstable direction $E^u$ and two stable directions, a weak unstable direction $E^{ws}$ and a strong unstable direction $E^{ss}$. The corresponding eigenvalues are denoted $\lambda_u>1>\lambda_{ws}>\lambda_{ss}$. By eventually making a $\mathbb T^3$-preserving change of basis in $\mathbb R^3$, we can assume the following:
\begin{enumerate}
\item
The 2-dimensional stable subspace $E^s$ is close to being horizontal: it is generated by the vectors $\overline a=(1,0,a)$ and $\overline b=(0,1,b)$, with $a,b\in (0,1)$; this implies that the angle between $\overline a$ and $\overline b$ is inside the interval $(\frac {\pi}3, \frac {\pi}2)$;
\item
The angle between $\overline a$ and the weak stable unit eigenvector $v_{ws}$ is smaller than the half of the angle between $\overline a$ and $\overline b$, so it is smaller than $\frac {\pi}4$ (this can be done by switching $\overline a$ and $\overline b$, if needed);
\item
The angle between $\overline b$ and the strong stable unit eigenvector $v_{ss}$ is $\theta_0>0$;
\end{enumerate}

We now construct the map $f$ inside the $C^{\oo}$ family of diffeomorphisms $f_t:\mathbb T^3\rightarrow\mathbb T^3$ given by the formula
\begin{equation}\label{eq:ft}
f_t(x,y,z)=(x, y+t\sin 2\pi x, z+bt\sin 2\pi x)\mod\mathbb Z^3.
\end{equation}
The maps $f_t$ are clearly isotopic to the identity, preserve the volume, and also preserve the planes of the stable foliation of $L$. In fact $f_t$ is just a translation on the lines parallel to $(0,1,b)$, and restricted to the stable leaves preserves the area. Observe that $f_t^{-1}=f_{-t}$. See figure \ref{fig.ft}.

\begin{figure}[h]
\begin{center}
\caption{$f_t$ is a translation on each dotted line parallel to $\overline b$}\label{fig.ft}
\begin{tikzpicture}

\draw[black, thick] (4,4) -- (4,-4) -- (6,-2) -- (6,6) -- (4,4) -- (-4,4) -- (-2,6) -- (6,6) node[anchor=west] {$\mathbb T^3$};
\draw[black, thick] (-4,4) -- (-4,-4) -- (4,-4);
\draw[black, thick, dashed] (-2,6) --(-2,-2) -- (6,-2);
\draw[black, thick, dashed] (-2,-2) --(-4,-4);
\draw[black, thick, dotted] (-3,-3.625) --(-1,0.375);
\draw[black, thin, |->] (-1.8,-1.225) -- (-1.3,-0.225);
\draw[black, thick, dotted] (-2,-3.25) --(0,0.75);
\draw[black, thin, |->] (-1.1,-1.45) -- (-0.1,0.55);
\draw[black, thick, dotted] (-1,-2.875) --(1,1.125);
\draw[black, thin, |->] (0.2,-0.475) -- (0.7,0.525);
\draw[black, thick, dotted] (0,-2.5) --(2,1.5);
\draw[black, thin, |-|] (1.4,0.3) -- (1.41,0.32);
\draw[black, thick, dotted] (1,-2.125) --(3,1.875);
\draw[black, thin, |->] (2.7,1.275) -- (2.2,0.275);
\draw[black, thick, dotted] (2,-1.75) --(4,2.25);
\draw[black, thin, |->] (3.9,2.05) -- (2.9,0.05);
\draw[black, thick, dotted] (3,-1.325) --(5,2.625);
\draw[black, thin, |->] (4.7,2.025) -- (4.2,1.075);
\draw[black, thin, |-|] (-2.6,-1.2) -- (-2.59,-1.18);
\draw[black, thin, |-|] (5.4,1.8) -- (5.41,1.82);
\filldraw[black, thin, fill opacity=0.1]  (-2.5,-4) -- (-2.5,4) -- (-0.5,6) -- (0.5,6) -- (-1.5,4) -- (-1.5,-4);
\filldraw[black, thin, dashed, fill opacity=0.05] (-2.5,-4) -- (-0.5,-2) -- (-0.5,6) -- (0.5,6) -- (0.5,-2) -- (-1.5,-4);
\draw[black, thin, <-] (-0.3,5.5) -- (-0.3,6.5) node[anchor=south] {$B_t^{\alpha}$};
\filldraw[black, thin, fill opacity=0.1] (1.5,-4) -- (1.5,4) -- (3.5,6) -- (4.5,6) -- (2.5,4) -- (2.5,-4);
\filldraw[black, thin, dashed, fill opacity=0.05] (1.5,-4) -- (3.5,-2) -- (3.5,6) -- (4.5,6) -- (4.5,-2) -- (2.5,-4);
\draw[black, thin, <-] (3.7,5.5) -- (3.7,6.5) node[anchor=south] {$B_t^{\alpha}$};
\draw[black, thin, <-] (1.7,5.5) -- (1.7,6.5) node[anchor=south] {$G_t^{\alpha-}$};
\draw[black, thin, <-] (-1.5,5.5) -- (-1.5,6.5) node[anchor=south] {$G_t^{\alpha+}$};
\draw[black, thin, <-] (5,5.5) -- (5,6.5) node[anchor=south] {$G_t^{\alpha+}$};
\filldraw[black, thin, dashed, fill opacity=0.1] (-2.5,-3.4375) -- (-0.5,0.5625) -- (0.5,0.9375) -- (-1.5,-3.0625);
\filldraw[black, thin, dashed, fill opacity=0.1] (1.5,-1.9375) -- (3.5,2.0625) -- (4.5,2.4375) -- (2.5,-1.5625);
\draw[black, thin, <->] (-3,-2) -- (-3.65,-1.9);
\node at (-3.25,-1.7) {$\theta_0$};
\draw[black, thick,->] (-4,-4) -- (4,-1) node[anchor=west] {$\overline a$};
\draw[black, thick,->] (-4,-4) -- (-2,0) node[anchor=east] {$\overline b$};
\filldraw[black, thin, fill opacity=0.1] (-4,-4) -- (4,-1) -- (6,3) -- (-2,0);
\draw[black, thick,->] (-4,-4) -- (3.4,0) node[anchor=south] {$v_{ws}$};
\draw[black, thick,->] (-4,-4) -- (-3.5,-1) node[anchor=south] {$v_{ss}$};
\node at (5,2) {$E^s$};

\end{tikzpicture}
\end{center}
\end{figure}

From now on we assume that $t>1$ and we denote by $C_L$ some positive constant which depends only on $L$ (and is independent of $t$ and $n$); this constant will be updated as needed, and may depend on the considered equality/inequality. If some value of it needs to be specified, it will be denoted by another letter. The derivative of $f_t$ is
\begin{equation}
Df_t(x,y,z)=\left[ \begin{array}{ccc}
1 & 0 & 0 \\
2\pi t\cos 2\pi x & 1 & 0 \\
2\pi tb\cos 2\pi x & 0 & 1 \end{array} \right]
\end{equation}

We have that $\| Df_t\|,\ \| Df_t^{-1}\|> C_Lt$ and $b(Df_t)< C_Lt^2$. We denote by $\mathbb PDf_t$ the projectivization of $Df_t|_{E^s}$ (it acts on the circle of unit stable vectors). A simple calculation shows that $\mathbb PDf_t(p)(v)$ is Lipschitz with Lipschitz constant $C_Lt^2$ in both variables (the base point $p$ and the unit vector $v$).

Observe that $Df_t$ preserves $E^s$ and
\begin{eqnarray*}
Df_t(x,y,z)\overline b&=&\overline b\\
Df_t(x,y,z)\overline a&=&\overline a+2\pi t\cos 2\pi x\overline b.
\end{eqnarray*}
If $v_u$ is the unstable unit eigenvector, then we have
\[
Df_tv_u=v_u+\tilde v_s,\ \ \ \tilde v_s\in E^s,\ \ \ \|\tilde v_s\|< C_Lt.
\]

We now investigate the Lyapunov exponents of $L^n\circ f_t$.

\subsection{The new unstable foliation}

We consider the unstable cone of size $\gamma$:
\[
C^u_{\gamma}=\mathbb R\cdot\{ v_u+w_s:\ \|w_s\|<\gamma\}.
\]
The following is an estimate of the size of an invariant unstable cone for $L^n\circ f_t$.

\begin{lemma}
There exists $\gamma_L>0$ such that if
\begin{equation}\label{eq:gamma}
\gamma=\gamma_Lt\cdot\frac{\lambda_{ws}^n}{\lambda_u^n}<1
\end{equation}
then the unstable cone $C^u_{\gamma}$ is invariant under $L^n\circ f_t$.
\end{lemma}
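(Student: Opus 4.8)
The plan is to establish invariance of the unstable cone $C^u_{\gamma}$ by a standard cone-field estimate: take a vector $v \in C^u_{\gamma}$ at a point $p$, apply $D(L^n\circ f_t)_p = D_{f_t(p)}L^n \circ D_pf_t$, and show the image lies in $C^u_{\gamma'}$ with $\gamma' \leq \gamma$. First I would write $v = v_u + w_s$ with $\|w_s\| < \gamma$, and use the formula $Df_t v_u = v_u + \tilde v_s$ with $\|\tilde v_s\| < C_L t$ together with $Df_t(w_s) \in E^s$ and $\|Df_t w_s\| \le \|Df_t\| \|w_s\| < C_L t \gamma$. Hence $Df_t v = v_u + w_s'$ with $w_s' \in E^s$ and $\|w_s'\| < C_L t(1+\gamma) < C_L t$ (using $\gamma < 1$), after renormalizing the $v_u$-component to $1$.

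Next I would apply $L^n$. Since $L^n$ expands the $v_u$-direction by $\lambda_u^n$ and contracts $E^s$ with norm at most $\lambda_{ws}^n$ (the weak stable eigenvalue dominates the stable norm), we get $L^n(v_u + w_s') = \lambda_u^n v_u + L^n w_s'$ with $\|L^n w_s'\| \le \lambda_{ws}^n \|w_s'\| < C_L t \lambda_{ws}^n$. Dividing by $\lambda_u^n$ to normalize the unstable component back to $v_u$, the new stable component has norm less than $C_L t \lambda_{ws}^n / \lambda_u^n$. So the image lies in $C^u_{\gamma'}$ with $\gamma' = C_L t \lambda_{ws}^n / \lambda_u^n$. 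Choosing $\gamma_L := C_L$ (the constant produced above, possibly enlarged to absorb the change-of-basis distortions and the comparison between the $E^s$-norm and $\|w_s'\|$ in the chosen coordinates), we obtain $\gamma' \le \gamma$, which is exactly cone invariance: $D(L^n\circ f_t)_p(C^u_{\gamma}) \subset C^u_{\gamma}$.

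\begin{proof}
Let $\gamma_L := C_L$, where $C_L$ is the constant (depending only on $L$) controlling $\|Df_t v_u - v_u\| < C_L t$, the norm $\|L^n|_{E^s}\| \le C_L \lambda_{ws}^n$, and the distortion of the adapted coordinates; enlarge it if necessary so that all the inequalities below hold. Assume $\gamma = \gamma_L t \, \lambda_{ws}^n/\lambda_u^n < 1$.

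Fix $p \in \Tor^3$ and $v \in C^u_{\gamma}$; after rescaling we may write $v = v_u + w_s$ with $w_s \in E^s$, $\|w_s\| < \gamma$. Since $Df_t$ preserves $E^s$ and $Df_t v_u = v_u + \tilde v_s$ with $\tilde v_s \in E^s$ and $\|\tilde v_s\| < C_L t$, we get
\[
Df_t(p) v = v_u + \tilde v_s + Df_t(p) w_s =: v_u + w_s',
\]
with $w_s' \in E^s$ and $\|w_s'\| \le \|\tilde v_s\| + \|Df_t\| \cdot \|w_s\| < C_L t + C_L t \gamma < 2 C_L t$, using $\gamma < 1$.

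Now apply $L^n$. As $L^n v_u = \lambda_u^n v_u$ and $L^n$ preserves $E^s$ with $\|L^n w_s'\| \le C_L \lambda_{ws}^n \|w_s'\|$, we obtain
\[
D(L^n\circ f_t)(p) v = L^n(v_u + w_s') = \lambda_u^n v_u + L^n w_s' = \lambda_u^n\left( v_u + \tfrac{1}{\lambda_u^n} L^n w_s' \right).
\]
The stable part of the normalized vector has norm at most
\[
\frac{1}{\lambda_u^n}\, \|L^n w_s'\| \le \frac{C_L \lambda_{ws}^n}{\lambda_u^n}\, \|w_s'\| < \frac{2 C_L^2\, t\, \lambda_{ws}^n}{\lambda_u^n} \le \gamma,
\]
where the last inequality holds after absorbing the numerical factor $2C_L^2$ into the definition of $\gamma_L$. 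Hence $D(L^n\circ f_t)(p) v \in C^u_{\gamma}$, so $C^u_{\gamma}$ is invariant under $L^n\circ f_t$.
\end{proof}

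The step I expect to be the main obstacle is not any single inequality — each is routine — but rather the bookkeeping of constants: one must check that the constant $\gamma_L$ can be chosen \emph{uniformly in $t$ and $n$}, which is why it is essential that the $v_u$-component grows by exactly $\lambda_u^n$ (no $t$-dependence, since $f_t$ only shears within $E^s$ and perturbs $v_u$ by a bounded amount) while the stable component is controlled by $\lambda_{ws}^n$ rather than merely by $\|L^n\|$. The hypothesis $\gamma < 1$ is what lets us absorb the $(1+\gamma)$ factor, closing the estimate.
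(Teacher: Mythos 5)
Your proof is correct and is essentially identical to the paper's argument: same decomposition $v=v_u+w_s$, same two-step application of $Df_t$ then $L^n$ tracking the stable component, and the same use of $\gamma<1$ to absorb the $(1+\gamma)$ factor when closing the estimate (the paper fixes $\gamma_L=2C_L$ at that point). The only cosmetic difference is that you apply $1+\gamma<2$ a step earlier and then absorb $2C_L^2$ rather than $2C_L$ into $\gamma_L$, which is harmless.
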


\begin{proof}

Let $v\in C_{\gamma}^u$, $v=v_u+w_s$, $\|w_s\|<\gamma$. Then
\[
Df_tv=Df_tv_u+Df_tw_s=v_u+\tilde v_s+Df_tw_s:=v_u+v_s',
\]
where
\[
\|v_s'\|=\|\tilde v_s+Df_tw_s\|\leq \|\tilde v_s\|+\|Df_t\|\cdot\|w_s\|<C_Lt+C_Lt\gamma=C_Lt(1+\gamma).
\]
Then
$$
L^nDf_tv=L^n(v_u+v_s')=\lambda_u^nv_u+L^nv_s':=\lambda_u^n(v_u+\tilde w_s),
$$
where
\[
\|\tilde w_s\|=\frac 1{\lambda_u^n}\|L^nv_s'\|<C_Lt\frac{\lambda_{ws}^n}{\lambda_u^n}(1+\gamma).
\]

In consequence, if $C_Lt\cdot\frac{\lambda_{ws}^n}{\lambda_u^n}(1+\gamma)<\gamma$ then $C_{\gamma}^u$ is invariant under $L^nDf_t$. Take $\gamma_L=2C_L$ and the conclusion follows.

\end{proof}

\begin{corollary}
If $\gamma_Lt\cdot\frac{\lambda_{ws}^n}{\lambda_u^n}<1$ then $L^n\circ f_t$ is partially hyperbolic. Its center stable bundle $E^{cs}_{t,n}$ coincides with the stable bundle $E^s$ of $L$, and therefore $E^{cs}_{t,n}$ integrates to a foliation (center-stable foliation) $W^{cs}_{t,n}$ that coincides with the stable foliation $W^s$ of $L$. The angle between the unstable bundle $E^u_{t,n}$ of $L^n\circ f_t$ and $E^u$, the unstable bundle of $L$, is bounded from above by the number $\gamma$ given by the formula \ref{eq:gamma}. In particular, the expansion rate along the unstable foliation $W^u_{t,n}$ of $L^n\circ f_t$ lies inside the interval \(\left(\lambda_u^n\cdot\frac{1-\gamma}{1+\gamma},\lambda_u^n\cdot\frac{1+\gamma}{1-\gamma}\right)\).
\end{corollary}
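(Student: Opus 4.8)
Looking at the corollary, I need to establish several claims about $L^n \circ f_t$ when $\gamma_L t \cdot \frac{\lambda_{ws}^n}{\lambda_u^n} < 1$: (i) partial hyperbolicity, (ii) the center-stable bundle equals $E^s$, hence the center-stable foliation is $W^s$, (iii) the angle bound between $E^u_{t,n}$ and $E^u$, and (iv) the expansion estimate along $W^u_{t,n}$.

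The plan is as follows. First I would invoke the preceding lemma: the cone $C^u_\gamma$ is invariant under $L^n \circ f_t$ (since $D(L^n \circ f_t) = L^n Df_t$ as $L^n$ is linear), and moreover the computation in that proof shows it is \emph{strictly} contracted into itself. Standard cone-field theory then yields a one-dimensional $D(L^n \circ f_t)$-invariant bundle $E^u_{t,n} \subset C^u_\gamma$ obtained as $\bigcap_{k \geq 0} D(L^n f_t)^k(C^u_\gamma)$, and since every vector in the cone has the form $v_u + w_s$ with $\|w_s\| < \gamma$, the angle between $E^u_{t,n}$ and $E^u = \mathbb{R} v_u$ is at most $\arctan \gamma \leq C_L \gamma$ (up to adjusting constants), giving (iii). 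Next, for the complementary direction: $Df_t$ preserves $E^s$ exactly and $L^n$ preserves $E^s$ (it is the linear stable space), so $D(L^n \circ f_t)$ preserves the $2$-plane field $E^s$; this is the candidate for $E^{cs}_{t,n}$. Since $E^s$ is the constant stable plane of $L$, it trivially integrates to $W^s$, giving (ii) once we know $E^{cs}_{t,n} = E^s$ — which follows because the invariant splitting complementary to a dominated $E^u_{t,n}$ inside $C^u_\gamma$ is unique, and $E^s$ is an invariant complement.

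For the domination/partial hyperbolicity claim (i), I would estimate the expansion on $E^u_{t,n}$ from below and the norm on $E^s$ from above, in an appropriate adapted metric. On $E^u_{t,n}$: a unit vector is $\frac{v_u + w_s}{\|v_u + w_s\|}$ with $\|w_s\| < \gamma$; applying $Df_t$ gives $v_u + v_s'$ with the bound $\|v_s'\| < C_L t(1+\gamma)$ from the lemma's proof, and then $L^n$ multiplies the $v_u$-component by $\lambda_u^n$, so the resulting vector has norm in $\left(\lambda_u^n \frac{1-\gamma}{1+\gamma}, \lambda_u^n \frac{1+\gamma}{1-\gamma}\right)$ after normalizing for the input length $\|v_u + w_s\| \in (1-\gamma, 1+\gamma)$; this is exactly estimate (iv), and in particular the expansion is $\geq \lambda_u^n \frac{1-\gamma}{1+\gamma}$. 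On $E^s$: since $f_t$ restricted to the stable leaves preserves area and $Df_t$ has the explicit triangular form with $\|Df_t\| \leq C_L t$, and $L^n$ contracts $E^s$ with norm $\leq \lambda_{ws}^n$, we get $\|D(L^n f_t)|_{E^s}\| \leq C_L t \lambda_{ws}^n$. Domination then requires $C_L t \lambda_{ws}^n < \tfrac12 \lambda_u^n \frac{1-\gamma}{1+\gamma}$, which holds for $n$ large (given $t$ fixed, or controlling $t$ relative to $n$ via $\gamma_L t \lambda_{ws}^n/\lambda_u^n < 1$) after passing to a further iterate or absorbing constants; strictly speaking one uses that $\lambda_{ws}/\lambda_u < 1$ so the ratio $t\lambda_{ws}^n/\lambda_u^n \to 0$. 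The uniform expansion $\|D(L^n f_t)(v)\| \geq \lambda > 1$ on $E^u_{t,n}$ likewise holds once $\lambda_u^n \frac{1-\gamma}{1+\gamma} > 1$.

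The main obstacle is bookkeeping the constants: one must be careful that ``$C_L$'' absorbs the geometry of $L$ (angles between $\overline{a}, \overline{b}, v_{ws}, v_{ss}$, norms of $L^n$ on $E^s$ up to $\lambda_{ws}^n$, etc.) uniformly in $t$ and $n$, and that the domination inequality genuinely follows from $\gamma < 1$ together with largeness of $n$ rather than being circular. The uniqueness of the dominated complement (to conclude $E^{cs}_{t,n} = E^s$ and not merely that $E^s$ is \emph{some} invariant complement) is the one conceptual point: it follows from the standard fact that under a dominated splitting the strong unstable bundle is the unique maximally-expanded invariant line field, so any invariant complement to it that is uniformly dominated must be $E^s$; since we have already exhibited $E^s$ as an invariant complement with the correct domination, it must coincide with $E^{cs}_{t,n}$. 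Everything else is the explicit linear algebra already carried out in the lemma, re-read to extract both the upper and lower expansion bounds.
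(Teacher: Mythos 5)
Your proof is correct and fills in precisely the details that the paper leaves implicit (the corollary is stated without proof, as an immediate consequence of the preceding invariant-cone lemma). The cone-field argument yielding $E^u_{t,n}$, the observation that $E^s$ is exactly invariant under both $Df_t$ and $L^n$, the sandwiching of the expansion rate from $\|v_u+w_s\|\in(1-\gamma,1+\gamma)$ on both input and output sides, and the domination estimate comparing $\lambda_u^n(1-\gamma)/(1+\gamma)$ against $C_L t\lambda_{ws}^n$ are all the intended steps. Two small remarks: (1) the identification $E^{cs}_{t,n}=E^s$ does not really need the uniqueness-of-dominated-complement argument you invoke — once $E^s\oplus E^u_{t,n}$ is exhibited as an invariant dominated splitting, $E^s$ \emph{is} the center-stable bundle by definition, so that paragraph can be shortened; (2) your parenthetical ``up to adjusting constants'' on the angle bound reflects a genuine but harmless imprecision in the paper itself, since the bound ``angle $\leq\gamma$'' strictly requires orthogonality of $v_u$ and $E^s$ (otherwise one picks up a constant depending on the angle $\theta_u$), a point the paper glosses over and that does not affect any downstream estimate.
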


\begin{definition}\hypertarget{condPH}{}
If the relation $\gamma_Lt\cdot\frac{\lambda_{ws}^n}{\lambda_u^n}<1$ is satisfied we say that we have the {\bf condition (PH)}, and $L^n\circ f_t$ is partially hyperbolic.
\end{definition}

Let us remark that even though the expansion rate along the unstable foliation $W^u_{t,n}$ has the lower bound $\lambda_u^n\cdot\frac{1-\gamma}{1+\gamma}$, in fact the exponential rate of expansion is $\lambda_u^n$. This is because we have the relation
\[
\lambda_u^{kn}\frac{1-\gamma}{1+\gamma}< \|D(L^n\circ f_t)^k|_{E^u_{t,n}}\|<\lambda_u^{kn}\frac{1+\gamma}{1-\gamma}.
\]
On the other hand we have the bound for the expansion on the center-stable bundle
\[
\|D(L^n\circ f_t)|_{E^{s}}\|\leq\|L^n|_{E^{s}}\|\cdot\|Df_t|_{E^{s}}\|< C_Lt\lambda_{ws}^n.
\]
In other words, there exists some fixed value of $C_L$, which from now will be denoted by $a_L$, such that $\|D(L^n\circ f_t)|_{E^{s}}\|< a_Lt\lambda_{ws}^n$.

\begin{definition}\hypertarget{condA}{}
If the relation $a_Lt\lambda_{ws}^n<1$ is satisfied we say that we have the {\bf condition (A)}, and $L^n\circ f_t$ is Anosov.
\end{definition}

Now we turn our attention to the disintegration of the volume along $W^u_{t,n}$, the unstable foliation of $L^n\circ f_t$. It is well known that this foliation is absolutely continuous (see for example \cite{ErgAnAc}), and the disintegration of the volume along the unstable leaves have densities $\rho$ which satisfy the following formula:
\[
\frac{\rho(x)}{\rho(y)}=\lim_{k\rightarrow\infty}\frac{\left|D(L^n\circ f_t)^{-k}|_{E^u_{t,n}(x)}\right|}{\left|D(L^n\circ f_t)^{-k}|_{E^u_{t,n}(y)}\right|},
\]
for $x$ and $y$ in the same unstable leaf. 

Let $v_u+v_u'(x)$ be an unstable vector in $E^u_{t,n}(x)$, with $v_u'(x)\in E^s$. Then 
\[
D(L^n\circ f_t)^{-k}(v_u+v_u'(x))=C_x\left(v_u+v_u'((L^n\circ f_t)^{-k}(x))\right).
\]
Since $L$ and $Df_t$ preserve the $u$-component of vectors, we see that $C_x=\lambda_u^{-kn}$, so
\[
\left|D(L^n\circ f_t)^{-k}|_{E^u_{t,n}(x)}\right|=\lambda_u^{-kn}\frac{\|v_u+v_u'((L^n\circ f_t)^{-k}(x))\|}{\|v_u+v_u'(x)\|}.
\]
A similar formula holds for $y$. Since $d\left((L^n\circ f_t)^{-k}(x),(L^n\circ f_t)^{-k}(y)\right)\xrightarrow[k\to\oo]{} 0$, we obtain that
\[
\frac{\rho(x)}{\rho(y)}=\frac{\|v_u+v_u'(x)\|}{\|v_u+v_u'(y)\|}.
\]
This implies the following results.

\begin{lemma}
If the condition \hyperlink{condPH}{(PH)} is satisfied, then the disintegrations of the volume along the unstable foliation is exactly (modulo re-scaling) the pull-back of the Lebesgue measure for the projection of $W^u_{t,n}$ into the original $W^u$ parallel to $E^u$. The densities of disintegrations satisfy the bounds
\begin{equation}\label{eq:density}
\frac{1-\gamma}{1+\gamma}<\frac{\rho(x)}{\rho(y)}<\frac{1+\gamma}{1-\gamma},\ \ \forall x,y,\ y\in W^u_{t,n}(x).
\end{equation}
\end{lemma}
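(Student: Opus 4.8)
The plan is to deduce both assertions of the lemma directly from the expression for the conditional densities obtained in the lines immediately preceding it; the argument is short and splits naturally into (i) the two-sided bound and (ii) the identification of the disintegration with a pull-back of Lebesgue measure.

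For (i): assume condition \hyperlink{condPH}{(PH)}. By the invariance of the unstable cone $C^u_\gamma$ proved above, $E^u_{t,n}(x)\subset C^u_\gamma$ for every $x$, so a generator of $E^u_{t,n}(x)$ may be written as $v_u+v_u'(x)$ with $v_u'(x)\in E^s$ and $\|v_u'(x)\|<\gamma$ — this being exactly the defining property of $C^u_\gamma=\Real\cdot\{v_u+w_s:\|w_s\|<\gamma\}$. Since $v_u$ is a unit vector, the triangle inequality gives $1-\gamma<\|v_u+v_u'(x)\|<1+\gamma$ for all $x$, and plugging this into the formula for $\rho(x)/\rho(y)$ from the lines above immediately produces \eqref{eq:density} (which, being symmetric in $x$ and $y$, does not depend on how that ratio is oriented).

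For (ii): fix a leaf $\mathcal L=W^u_{t,n}(x_0)$ and let $p\colon\mathcal L\to W^u(x_0)$ be the map sending $y\in\mathcal L$ to the unique point of $W^u(x_0)$ lying on the stable plane $W^{s}(y)=W^{cs}_{t,n}(y)$; under \hyperlink{condPH}{(PH)} the bundle $E^u_{t,n}$ is a graph over $E^u$ of slope $<\gamma<1$, hence transverse to $E^s$, so $p$ is a well-defined diffeomorphism. I would then use the $E^u$-coordinate $s$ relative to $x_0$ as a common parameter on $\mathcal L$ and on $W^u(x_0)=\{\,x_0+s\,v_u\,\}$: in this parameter $p$ is the identity, the Lebesgue (= arc-length) measure on $W^u(x_0)$ equals $ds$ because $\|v_u\|=1$, so $p^{*}(\mathrm{Leb}_{W^u})=ds$, while the velocity of $\mathcal L$ at the point with coordinate $s$ is the generator $v_u+v_u'$ of $E^u_{t,n}$, so the Riemannian length element along $\mathcal L$ is $\|v_u+v_u'\|\,ds$. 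It then remains to check that the conditional density of the volume on $\mathcal L$ expressed in the parameter $s$, that is the quantity $\rho\cdot\|v_u+v_u'\|$, is constant along $\mathcal L$; this is exactly what the Jacobian computation preceding the lemma gives when one lets $k\to\infty$, since the factors $\lambda_u^{-kn}$ cancel and the contributions coming from the backward iterates $(L^n\circ f_t)^{-k}(x)$ and $(L^n\circ f_t)^{-k}(y)$ cancel in the limit (those two points converge to one another along $\mathcal L$). Consequently the conditional measure on $\mathcal L$ equals a constant, depending only on $\mathcal L$, times $ds=p^{*}(\mathrm{Leb}_{W^u})$, which is the asserted equality modulo re-scaling; together with (i) this completes the proof.

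I do not expect a genuine obstacle here: the lemma is essentially a corollary of the density computation that precedes it. The only point that calls for some care is the bookkeeping in step (ii) — checking that the parameter pulled back from $W^u(x_0)$ is precisely the $E^u$-coordinate, that the leaf-dependent normalizing constants are irrelevant for the stated equality, and that it is the projection along the stable planes $W^{cs}_{t,n}$ (rather than along some other transverse direction) that turns $p$ into the identity in these coordinates. Once the invariant-cone estimate recalled above is available, all of this is routine.
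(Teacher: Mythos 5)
Your proposal is correct and follows the same route as the paper: both deduce the lemma from the Jacobian computation immediately preceding it, with part (i) being the triangle-inequality bound $1-\gamma<\|v_u+v_u'\|<1+\gamma$ coming from the invariant cone, and part (ii) being the geometric re-reading of that same computation. Your treatment of (ii) is in fact more explicit than the paper's, which only states the pull-back claim without spelling out the change of parameter.

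One point worth flagging, since you correctly navigate around it but it could trip up a reader comparing your argument to the text: the paper's displayed conclusion of the Jacobian computation is $\rho(x)/\rho(y)=\|v_u+v_u'(x)\|/\|v_u+v_u'(y)\|$, whereas taking the $k\to\infty$ limit in the ratio
\[
\frac{\bigl|D(L^n\circ f_t)^{-k}|_{E^u_{t,n}(x)}\bigr|}{\bigl|D(L^n\circ f_t)^{-k}|_{E^u_{t,n}(y)}\bigr|}
=\frac{\|v_u+v_u'((L^n\circ f_t)^{-k}x)\|}{\|v_u+v_u'((L^n\circ f_t)^{-k}y)\|}\cdot\frac{\|v_u+v_u'(y)\|}{\|v_u+v_u'(x)\|}
\]
actually yields the reciprocal, $\rho(x)/\rho(y)=\|v_u+v_u'(y)\|/\|v_u+v_u'(x)\|$. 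The displayed identity in the paper therefore has the ratio inverted. This is harmless for the bound in \eqref{eq:density}, which is symmetric in $x$ and $y$, but it matters for part (ii): the corrected formula is precisely the statement that $\rho\cdot\|v_u+v_u'\|$ is constant along the leaf, i.e.\ that the conditional measure is a constant multiple of $ds=p^{*}(\mathrm{Leb}_{W^u})$ — which is what you assert and what the lemma claims. Had you used the paper's displayed formula literally, you would instead have obtained that $\rho/\|v_u+v_u'\|$ is constant and the pull-back statement would fail. So your bookkeeping in (ii), including the use of the $E^u$-coordinate $s$ and the length element $\|v_u+v_u'\|\,ds$, is exactly right.
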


\subsection{Estimates on the derivative}

Inside $E^s$ we define {\it the good cone $C_g$}:
\begin{equation}
C_g=\{ r\overline a+s\overline b:\ |s|<3|r|\}.
\end{equation}
Let us remark that the good cone is quite big, and it contains $\overline a$ and $E^{ws}$, while its complement is smaller, centered around $\overline b$.

\begin{figure}[H]\caption{Action of $Df_t$ and $L^n$ on the cone $C_g$ inside $E^s$}
\begin{tikzpicture}

\draw[black, thin, <->] (-2.75,2.5) arc (100:80:4);
\node at(-2.2,2.8) {$\theta_0$};
\draw[black, thick,->] (-2,-2) -- (-1,5) node[anchor=west] {$\overline b$};
\draw[black, thick,->] (-2,-2) -- (6,-2) node[anchor=north] {$\overline a$};
\filldraw[black, thin, fill opacity=0.1] (-5,-5) -- (3,3) -- (3,-6) -- (-5,0.4) -- (-5,-5);
\draw[black, thick,->] (-2,-2) -- (5,0) node[anchor=south] {$v_{ws}$};
\draw[black, thick,->] (-2,-2) -- (-3,4) node[anchor=east] {$v_{ss}$};
\draw[black, thin] (-5.5,-3) -- (6.4,0.4) node[anchor=west] {$E^{ws}$};
\draw[black, thin] (-1.5,-5) -- (-3.2,5.1) node[anchor=west] {$E^{ss}$};
\node at (2,-4) {$C_g$};
\filldraw[black, thin, fill opacity=0.2] (-2,-5.5) -- (-2,5) -- (-1.5,5) -- (-2.25,-5.5) -- (-2,-5.5);
\filldraw[black, thin, fill opacity=0.2] (-2.75,-5.5) -- (-0.5,5) -- (0,5) -- (-3,-5.5) -- (-2.75,-5.5);
\draw[black, thin, ->] (0.5,0.5) -- (-1,1.5);
\node at (0.25,1.25) {$Df_t|_{G_t^{\alpha+}}$};
\draw[black, thin, ->] (0.5,0.5) -- (-1.75,1.5);
\node at (-0.5,0.5) {$Df_t|_{G_t^{\alpha-}}$};
\filldraw[black, thin, fill opacity=0.3] (-6,-3.5) -- (6,1) -- (6,0.6) -- (-6,-3.3) -- (-6,-3.5);
\filldraw[black, thin, fill opacity=0.3] (-6,-3.9) -- (6,1.8) -- (6,1.4) -- (-6,-3.7) -- (-6,-3.9);
\draw[black, thin, ->] (0.5,4.5) arc (90:30:5);
\node at(3.5,4) {$L^n$};

\end{tikzpicture}
\end{figure}

From now on we assume that $t>1$. Let $\alpha\in(0,\frac 12)$. {\it The bad region $B_t^{\alpha}$} on the torus is
\begin{equation}
B_t^{\alpha}=\{ (x,y,z)\in\mathbb T^3:\ |\cos 2\pi x|<t^{-\alpha}\}.
\end{equation}
The complement of the bad region is {\it the good region $G_t$}. We divide it into two parts:
\[
G_t^{\alpha+}=\{ (x,y,z)\in\mathbb T^3:\ \cos 2\pi x>t^{-\alpha}\}\ \ G_t^{\alpha-}=\{ (x,y,z)\in\mathbb T^3:\ \cos 2\pi x<-t^{-\alpha}\}.
\]
The three regions $B_t^{\alpha}, G_t^{\alpha+}$ and $G_t^{\alpha-}$ are invariant by $f_t$ because $f_t$ does not change the $x$-coordinate. We remark that for large $t$ the bad region is small, it has size of the order of $t^{-\alpha}$, and the two good regions have equal sizes, close to $\frac 12$.

Let us see the action of $Df_t$ on vectors in $E^s$.

\begin{lemma}
Let $v\in E^s$. 
\begin{enumerate}
\item For $t>1$, $v\in C_g$ and $p\in G_t^{\alpha}$ implies
\begin{equation}
\frac{\|Df_t(p)v\|}{\|v\|}> C_Lt^{1-\alpha}.
\end{equation}
Furthermore there exists $t_0>0$ such that if $t>t_0$ then the angle between $Df_t(p)v$ and $\overline b$ is smaller than $\frac{\theta_0}2$.
\item
If $v\in C_g$ and $p\in B_t^{\alpha}$, or if $v\notin C_g$, then
\begin{equation}
\frac{\|Df_t(p)v\|}{\|v\|}> \frac {C_L}t.
\end{equation}
\end{enumerate}
\end{lemma}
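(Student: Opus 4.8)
The computation is entirely about the explicit derivative
\[
Df_t(x,y,z)=\begin{bmatrix} 1 & 0 & 0 \\ 2\pi t\cos 2\pi x & 1 & 0 \\ 2\pi tb\cos 2\pi x & 0 & 1 \end{bmatrix},
\]
restricted to $E^s=\mathrm{span}\{\overline a,\overline b\}$, where it acts by $Df_t\,\overline b=\overline b$ and $Df_t\,\overline a=\overline a+2\pi t\cos 2\pi x\,\overline b$. So for $v=r\overline a+s\overline b$ one has $Df_t(p)v=r\overline a+(s+2\pi tr\cos 2\pi x)\overline b$. The whole lemma is a case analysis on the size of $|r|$ versus $|s|$ (whether $v\in C_g$, i.e. $|s|<3|r|$) and on the size of $|\cos 2\pi x|$ (whether $p\in G_t^\alpha$, i.e. $|\cos 2\pi x|>t^{-\alpha}$, or $p\in B_t^\alpha$).

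\textbf{Part (1): $v\in C_g$, $p\in G_t^\alpha$.} Write $c=2\pi t\cos 2\pi x$, so $|c|>2\pi t^{1-\alpha}$, and $Df_t(p)v=r\overline a+(s+cr)\overline b$. Since $\overline a,\overline b$ are a fixed basis of the fixed plane $E^s$ with angle bounded away from $0$ and $\pi$ (it lies in $(\pi/3,\pi/2)$ by the normalization), all norm comparisons between $\|r\overline a+s'\overline b\|$ and $\max(|r|,|s'|)$ (or $|r|+|s'|$) are up to a constant $C_L$. First I would bound the new $\overline b$-coefficient from below: $|s+cr|\geq |c||r|-|s|\geq |c||r|-3|r|=(|c|-3)|r|$, and since $t>1$ and $|c|>2\pi t^{1-\alpha}$, for $t$ large this is $\geq \tfrac12|c||r|\geq C_Lt^{1-\alpha}|r|$; more carefully, since $|c|-3\geq \tfrac12|c|$ once $|c|\geq 6$ and otherwise one can absorb constants, one gets $|s+cr|\geq C_L t^{1-\alpha}|r|$ for all $t>1$ after adjusting $C_L$. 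Meanwhile $\|v\|\leq C_L(|r|+|s|)\leq C_L(1+3)|r|=C_L|r|$ because $v\in C_g$. Hence
\[
\frac{\|Df_t(p)v\|}{\|v\|}\geq \frac{C_L^{-1}|s+cr|}{C_L|r|}\geq C_L t^{1-\alpha},
\]
which is the first inequality. For the angle claim: $Df_t(p)v=r\overline a+(s+cr)\overline b$ has $\overline b$-coefficient of absolute value $\geq C_L t^{1-\alpha}|r|$ and $\overline a$-coefficient exactly $r$; the tangent of the angle to $\overline b$ is comparable to $|r|/|s+cr|\leq C_L t^{-(1-\alpha)}\to 0$ as $t\to\infty$ (with $\alpha<1/2$ fixed), so choosing $t_0$ large makes this angle smaller than $\theta_0/2$. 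One should note the angle of $\overline a$ itself to $\overline b$ is a fixed number $<\pi/2$, so the comparison between the Euclidean angle and the coefficient ratio is uniform.

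\textbf{Part (2): the ``bad'' cases.} Here we only need the crude lower bound $\|Df_t(p)v\|/\|v\|>C_L/t$, which is really just a lower bound on the smallest singular value of $Df_t|_{E^s}$. Since $Df_t|_{E^s}$ is unimodular (it preserves area on the stable planes, as already observed: it is a shear $\overline a\mapsto\overline a+c\overline b$, $\overline b\mapsto\overline b$), its smallest singular value equals the reciprocal of its largest, and $\|Df_t|_{E^s}\|\leq \|Df_t\|<C_L t$ (or one can just compute directly: the inverse shear is $\overline a\mapsto\overline a-c\overline b$, $\overline b\mapsto\overline b$, with $|c|<2\pi t$, giving $\|(Df_t|_{E^s})^{-1}\|<C_L t$). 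Therefore for every nonzero $v\in E^s$, $\|Df_t(p)v\|\geq \|(Df_t|_{E^s})^{-1}\|^{-1}\|v\|>C_L^{-1}t^{-1}\|v\|$, which gives the stated bound with an updated $C_L$. This single estimate covers both subcases ($v\in C_g$ with $p\in B_t^\alpha$, and $v\notin C_g$), so no further analysis is needed.

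\textbf{Main obstacle.} There is no real obstacle here — the lemma is a routine explicit estimate. The only point requiring a little care is tracking that all the passages between coordinate-wise bounds and norm bounds cost only a factor depending on $L$ (through the fixed angle between $\overline a$ and $\overline b$ and their fixed lengths), and in Part (1) making sure the inequality $|c|-3\geq \tfrac12|c|$ (or the appropriate absorption of the additive constant $3$) is handled for all $t>1$ rather than only for $t$ large; the angle statement is the only place one genuinely needs to pass to $t>t_0$.
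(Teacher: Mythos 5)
Your proof is correct and follows essentially the same route as the paper's: for part (1) you both isolate the $\overline b$-coefficient of $Df_t(p)v$, bound it below by $|c|-3$ with $|c|=2\pi t|\cos 2\pi x|>2\pi t^{1-\alpha}$, and compare with $\|v\|\lesssim|r|$; for part (2) you both reduce to a bound on $\|(Df_t|_{E^s})^{-1}\|$. (The only cosmetic difference is that you work directly in the $(\overline a,\overline b)$-coordinates on $E^s$ while the paper writes out the explicit $\mathbb R^3$-coordinates; and your observation that $|c|>2\pi>6$ automatically holds whenever $t>1$ and $p\in G_t^\alpha$ is a slightly tidier way to handle the additive constant than the paper's.)
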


\begin{proof}
Take first $v=\overline a+r\overline b=(1,r,a+rb)$, $r<3$, so $v\in C_g$ and $\|v\|$ is bounded by some constant $R_L$ only depending on $L$. Applying $Df_t(p)$ for $p=(x,y,z)\in G_t^{\alpha}$ and using the fact that $|\cos 2\pi x|>t^{-\alpha}$ on $G_t^{\alpha}$ we get
\[
\|Df_t(p)v\|=\|(1,2\pi t\cos 2\pi x+r,2\pi tb\cos 2\pi x+a+rb)\|>2\pi(t^{1-\alpha}-1)\geq \frac{2\pi}{R_L}(t^{1-\alpha}-1)\|v\|
\]
which for $t>1$ can be written as
\[
	\|Df_t(p)v\|\geq C_Lt^{1-\alpha}.
\]
Remember that $Df_t(p)\overline a=\overline a +2\pi t\cos 2\pi x\overline b$ and $Df_t(p)\overline b=\overline b$ so
\[
Df_t(p)v=Df_t(p)(\overline a+r\overline b)=\overline a+(r+2\pi t\cos 2\pi x)\overline b.
\]
Observe that the $\overline b$ component of $Df_t(p)v$ satisfies $|r+2\pi t\cos 2\pi x|\geq 2\pi t^{1-\alpha}-1> \pi t^{\frac 12}$ if $t>1$. Then the second part of (1) follows. 

For the second part observe that 
\begin{equation}
Df_t^{-1}(x,y,z)=\left[ \begin{array}{ccc}
1 & 0 & 0 \\
-2\pi t\cos 2\pi x & 1 & 0 \\
-2\pi tb\cos 2\pi x & 0 & 1 \end{array} \right]=Df_{-t}(x,y,z)
\end{equation}
and therefore, for $p=(x,y,z)$
\[
	\inf_{v}\{\norm{D_pf_tv}:\norm{v}=1\}=\frac{1}{\norm{D_pf_t^{-1}}}\geq \frac{1}{1+4\pi t|\cos 2\pi x|}\geq \frac{C_L}{t} 
\]
if $t>1$. This implies the second part.
\end{proof}

Now let us study the action of $L^n$ on $E^s$.

\begin{lemma}
Let $v\in E^s$. Then
\begin{enumerate}
\item
There exists $n_0>0$ such that if $n>n_0$ and the angle between $v$ and $v_{ss}$ is greater than $\frac{\theta_0}2$ then we have $L^nv\in C_g$ and
\begin{equation}\label{eq:expAnC}
\frac{\|L^nv\|}{\|v\|}> C_L\lambda_{ws}^n.
\end{equation}
\item
If the angle between $v$ and $v_{ss}$ is smaller or equal to $\frac{\theta_0}2$ then
\begin{equation}
\frac{\|L^nv\|}{\|v\|}> C_L\lambda_{ss}^n.
\end{equation}
\end{enumerate}
\end{lemma}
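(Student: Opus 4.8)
The plan is to exploit the eigenspace decomposition of $L$ inside $E^s$. Write $v = v_{ws}$-component plus $v_{ss}$-component, say $v = c_1 u_1 + c_2 u_2$ where $u_1$ is a unit vector in $E^{ws}$ and $u_2$ a unit vector in $E^{ss}$, so that $L^n v = c_1 \lambda_{ws}^n u_1 + c_2 \lambda_{ss}^n u_2$. Since $\lambda_{ws} > \lambda_{ss}$, the first term dominates as $n$ grows, provided $c_1$ is not too small relative to $c_2$; and the hypothesis in part (1) that the angle between $v$ and $v_{ss}$ exceeds $\theta_0/2$ is exactly a quantitative lower bound on $|c_1|/\|v\|$ (depending only on $L$ through $\theta_0$ and the angle between the eigendirections, hence absorbable into $C_L$).

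For part (1): with $|c_1| \geq c(L)\|v\|$, I would estimate $\|L^n v\| \geq |c_1|\lambda_{ws}^n \cdot \sin\angle(u_1,u_2) - |c_2|\lambda_{ss}^n \geq c(L)\lambda_{ws}^n \|v\|(\text{const}) - \|v\|\lambda_{ss}^n$, and then choose $n_0$ large enough (depending only on $L$, via the ratio $\lambda_{ss}/\lambda_{ws} < 1$) so that the $\lambda_{ss}^n$ term is, say, less than half the leading term; this yields $\|L^n v\|/\|v\| > C_L \lambda_{ws}^n$. For the claim $L^n v \in C_g$, I would observe that $L^n v / \|L^n v\|$ converges to $\pm u_1 \in E^{ws}$ as $n \to \infty$, uniformly over all admissible $v$ (again because of the uniform lower bound on $|c_1|/|c_2|\cdot(\lambda_{ws}/\lambda_{ss})^n \to \infty$); since $E^{ws}$ lies in the interior of $C_g$ by the construction of $C_g$ (it was explicitly noted that $C_g$ contains $\overline a$ and $E^{ws}$ and is ``quite big''), for $n$ beyond some $n_0(L)$ every such $L^n v$ lands in $C_g$. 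I would pick $n_0$ to be the max of the two requirements.

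For part (2): here the angle between $v$ and $v_{ss}$ is at most $\theta_0/2$, so $v$ is close to the strong stable direction; then $\|L^n v\| \geq |c_2| \lambda_{ss}^n \cdot \sin\angle(u_1,u_2) \geq C_L \lambda_{ss}^n \|v\|$, using that $|c_2| \geq c(L)\|v\|$ when $v$ is within $\theta_0/2$ of $v_{ss}$ (and noting $\lambda_{ss}^n > 0$ so there is no competing sign issue — indeed even the crude bound $\|L^nv\| \geq \lambda_{ss}^n\|v\|$ if one normalizes the metric, but keeping $C_L$ is safe). This direction is the easier one since we are just using the bottom of the spectrum as a lower bound on all of $E^s$.

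The main obstacle — really the only subtle point — is making the constants genuinely uniform in $v$ and independent of $n$: one must check that the threshold $n_0$ depends only on $L$ (through $\theta_0$, the angle between $E^{ws}$ and $E^{ss}$, and the spectral gap $\lambda_{ws}/\lambda_{ss}$) and not on which admissible $v$ is chosen, and that the conclusion $L^n v \in C_g$ holds for the \emph{fixed} cone $C_g$ simultaneously for all such $v$. Since all these quantities are controlled by $L$ alone and the convergence $L^n v/\|L^n v\| \to E^{ws}$ is uniform on the compact set of admissible directions, this goes through; I expect the write-up to be a short computation once the eigenbasis is fixed.
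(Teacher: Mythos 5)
Your proof is correct and takes essentially the same route as the paper: both work in the $(v_{ws},v_{ss})$ eigenbasis, convert the angular hypothesis $\angle(v,v_{ss})>\theta_0/2$ into a uniform bound on the ratio of eigencomponents, then choose $n_0$ depending only on $L$ so that $(\lambda_{ss}/\lambda_{ws})^n$ kills the $v_{ss}$ term, which simultaneously gives the norm estimate and forces $L^nv$ projectively close to $E^{ws}\subset\mathrm{int}\,C_g$. The paper normalizes by taking $v=v_{ws}+rv_{ss}$ rather than keeping $\|v\|=1$, an immaterial bookkeeping difference, and likewise dismisses part (2) as obvious.
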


\begin{proof}
First take $v\in E^s$, $v=v_{ws}+rv_{ss}$ with $|r|<\frac{C_L}{\theta_0}$ such that the angle between $v$ and $v_{ss}$ is greater that $\frac{\theta_0}2$. Then $\|v\|< \frac{C_L}{\theta_0}+1$. Applying $L^n$ we obtain 
\[
\|L^nv\|=\|\lambda_{ws}^nv_{ws}+\lambda_{ss}^nrv_{ss}\|> \lambda_{su}^n\left(1-\frac{C_l}{\theta_0}\frac{\lambda_{ss}^n}{\lambda_{ws}^n}\right).
\]
This will give us \ref{eq:expAnC} provided that $\frac{\lambda_{ss}^{n_0}}{\lambda_{ws}^{n_0}}\leq \frac{\theta_0}{2C_L}$.

On the other hand observe that
\[
\frac{L^nv}{\lambda_{ws}^n}=v_{ws}+r\frac{\lambda_{ss}^n}{\lambda_{ws}^n}v_{ss},
\]
so the angle between $L^nv$ and $v_{ws}$ is small whenever $\left|r\frac{\lambda_{ss}^n}{\lambda_{ws}^n}\right|$ is small. In particular if we choose $n_0$ such that
\[
|r|\frac{\lambda_{ss}^n}{\lambda_{ws}^n}<\frac{C_L}{\theta_0}\frac{\lambda_{ss}^{n_0}}{\lambda_{ws}^{n_0}}<C_Ld_{\mathbb P}(v_{ws},C_g)
\]
then $L^nv\in C_g$.

The second part is obvious.
\end{proof}

Putting together the two lemmas above we get the following proposition which describes the action of $f_t\circ L^n$ on $E^s$.

\begin{proposition}\label{prop:expansion}
Assume that the conditions $n>n_0$ and $t>t_0$ are satisfied.  
\begin{enumerate}
\item If $v\in C_g$ and $p\in G_t^{\alpha}$, then
\begin{equation}\label{eq:strong}
\frac{\|D(L^n\circ f_t)(p)v\|}{\|v\|}> C_L\lambda_{ws}^nt^{1-\alpha}
\end{equation}
and $D(L^n\circ f_t)(p)v\in C_g$.
\item For every $p\in\mathbb T^3$ and $v\neq 0$,
\begin{equation}\label{eq:weak}
\frac{\|D(L^n\circ f_t)(p)v\|}{\|v\|}> C_L\frac {\lambda_{ss}^n}t.
\end{equation}
\end{enumerate}
\end{proposition}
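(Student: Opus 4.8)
The plan is to obtain the Proposition by composing the two preceding lemmas — the one describing the action of $Df_t$ on $E^s$ and the one describing the action of $L^n$ on $E^s$ — through the chain rule $D(L^n\circ f_t)(p)=L^n\cdot Df_t(p)$, keeping track simultaneously of the norm expansion and of the position of the image vector inside $E^s$ (relative to the good cone $C_g$ and to the strong stable direction $v_{ss}$).

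To prove claim (1), I would fix $v\in C_g$ and $p\in G_t^{\alpha}$. The first alternative of the lemma on $Df_t$ gives $\|Df_t(p)v\|/\|v\|>C_Lt^{1-\alpha}$ and, since $t>t_0$, that the angle between $w:=Df_t(p)v$ and $\overline b$ is less than $\theta_0/2$. As the basis was normalized so that $\angle(\overline b,v_{ss})=\theta_0$ (item (3) of the preliminary set-up), the reverse triangle inequality in $\mathbb{P}E^s$ gives $\angle(w,v_{ss})>\theta_0-\theta_0/2=\theta_0/2$; this is exactly the hypothesis of the first alternative of the lemma on $L^n$, so with $n>n_0$ one gets $L^nw\in C_g$ and $\|L^nw\|/\|w\|>C_L\lambda_{ws}^n$. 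Multiplying the two expansion bounds (and renaming the product of the two constants as $C_L$) gives $\|D(L^n\circ f_t)(p)v\|/\|v\|>C_L\lambda_{ws}^nt^{1-\alpha}$, while $D(L^n\circ f_t)(p)v=L^nw\in C_g$.

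To prove claim (2), I would split into two cases. If $v\in C_g$ and $p\in G_t^{\alpha}$, claim (1) already yields a stronger bound, since $\lambda_{ws}>\lambda_{ss}$ and $t>1$ give $\lambda_{ws}^nt^{1-\alpha}>\lambda_{ss}^nt^{-1}$. Otherwise we are under the hypotheses of the second alternative of the lemma on $Df_t$ (either $v\notin C_g$, or $v\in C_g$ with $p\in B_t^{\alpha}$), so $\|Df_t(p)v\|/\|v\|>C_L/t$; then, regardless of the angle between $w:=Df_t(p)v$ and $v_{ss}$, the lemma on $L^n$ gives $\|L^nw\|/\|w\|>C_L\lambda_{ss}^n$ (its first alternative when that angle exceeds $\theta_0/2$, using $\lambda_{ws}^n>\lambda_{ss}^n$, and its second alternative otherwise). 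Composing gives $\|D(L^n\circ f_t)(p)v\|/\|v\|>C_L\lambda_{ss}^n/t$, possibly after shrinking $C_L$ to the smaller of the constants arising in the two cases.

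I do not expect a genuine obstacle, since every ingredient is a distortion estimate already established; the only points requiring care are (a) confirming that $\angle(\overline b,v_{ss})=\theta_0$ together with $\angle(w,\overline b)<\theta_0/2$ really places $w$ in the regime where the $\lambda_{ws}^n$-expansion of $L^n$ applies, (b) checking that the case split in claim (2) is exhaustive and that the cone-invariance conclusion is asserted only in claim (1), where it holds, and (c) bookkeeping the finitely many updates of the constant $C_L$ and observing that the thresholds $n_0,t_0$ are inherited from the two lemmas.
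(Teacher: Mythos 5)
Your proposal is correct and follows exactly the route the paper takes: the paper's own proof is the single sentence ``Putting together the two lemmas above we get the following proposition,'' and your argument supplies precisely the bookkeeping that sentence leaves implicit — the reverse triangle inequality $\angle(w,v_{ss})>\theta_0-\theta_0/2$ linking the output of the $Df_t$ lemma to the hypothesis of the $L^n$ lemma in claim (1), and the exhaustive case split together with $\lambda_{ws}^n>\lambda_{ss}^n$ in claim (2).
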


\subsection{Separation of preimages of the bad cone}

Now we investigate the preimages of the bad cone $C_g^c:=C_b$. We will consider the preimages on the two good regions $G_t^{\alpha+}$ and $G_t^{\alpha-}$:
\begin{align*}
C^+&=\cup_{(L^n\circ f_t)^{-1}(p)\in G_t^{\alpha+}}D(L^n\circ f_t)^{-1}(p)C_b\\
C^-&=\cup_{(L^n\circ f_t)^{-1}(p)\in G_t^{\alpha-}}D(L^n\circ f_t)^{-1}(p)C_b.
\end{align*}
$L^{-n}$ pushes the bad cone close to $E^{ss}$. Then the preimage under $Df_t$ pushes the bad cone in one direction on $G_t^{\alpha+}$, and in the other direction on $G_t^{\alpha-}$, thus creating a separation between the two preimages. We have the following result.
\begin{lemma}\label{le:separation}
There exist $t_1\geq t_0$, $n_1\geq n_0$ and $s_L>0$ such that, if $t>t_1$ and $n>n_1$ then
\begin{equation}
d_{\mathbb P}(C^+,C^-)>s_Lt^{-1}.
\end{equation}
\end{lemma}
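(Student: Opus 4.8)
\textbf{Proof proposal for Lemma \ref{le:separation}.}

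The plan is to track precisely where the bad cone $C_b = C_g^c$ goes under $(D(L^n\circ f_t))^{-1} = Df_t^{-1}\circ L^{-n}$, distinguishing the two cases according to the sign of $\cos 2\pi x$ at the preimage point. First I would apply $L^{-n}$: since $C_b$ is a narrow sector around $\overline b$, and $\overline b$ makes angle $\theta_0$ with $v_{ss}$, the image $L^{-n}C_b$ is a sector that is pushed exponentially (in $n$) close to the strong stable eigendirection $E^{ss}$ — more precisely, for $n$ large its angular distance to $v_{ss}$ is of order $(\lambda_{ws}/\lambda_{ss})^{-n}$ times the original width, hence negligible compared to all the $t$-dependent quantities below. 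So after this step all vectors in $L^{-n}C_b$ are within, say, $\theta_0/4$ of $v_{ss}$, uniformly once $n > n_1$ for suitable $n_1 \geq n_0$.

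Next I would apply the inverse branch $Df_t^{-1} = Df_{-t}$ at a point $p$ with $(L^n\circ f_t)^{-1}(p) = q = (x,y,z)$. Using the explicit formula $Df_t^{-1}\overline b = \overline b$ and $Df_t^{-1}\overline a = \overline a - 2\pi t\cos 2\pi x\,\overline b$, a vector $w$ close to $v_{ss}$ decomposes as $w = c_a\overline a + c_b\overline b$ with $c_a$ bounded away from $0$ (since $v_{ss}$ is not parallel to $\overline b$, it has angle $\theta_0$ with it), and $Df_t^{-1}w = c_a\overline a + (c_b - 2\pi t\cos 2\pi x\, c_a)\overline b$. The key point: on $G_t^{\alpha+}$ we have $\cos 2\pi x > t^{-\alpha} > 0$, so the $\overline b$-coefficient is shifted in the \emph{negative} direction by an amount $\geq 2\pi t^{1-\alpha}|c_a| \gtrsim t^{1-\alpha}$, while on $G_t^{\alpha-}$ it is shifted in the \emph{positive} direction by a comparable amount. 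Thus $C^+$ lands in a thin sector on one side of $\overline b$ and $C^-$ in a thin sector on the other side. To quantify the separation as a projective distance I would estimate the slopes: the $\overline b/\overline a$-ratio for a vector in $C^+$ is $\leq -2\pi t^{1-\alpha} + O(1)$ while for $C^-$ it is $\geq 2\pi t^{1-\alpha} - O(1)$; however both sectors also have some \emph{width} coming from the width of $C_b$ itself (which $Df_t^{-1}$ expands by a factor of at most $b(Df_t) < C_L t^2$) and from the tiny width of $L^{-n}C_b$. Converting to angles: a vector with $|c_b/c_a|$ very large is close to $\overline b$, at angular distance $\sim |c_a/c_b| \sim 1/(t^{1-\alpha})$ from $\overline b$; the separation between the $C^+$ side and the $C^-$ side, measured through the "far side" (not through $\overline b$), is therefore of order $1/t^{1-\alpha}$ minus the sector widths.

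The main obstacle — and the step requiring care — is checking that the separation $1/t^{1-\alpha}$ genuinely dominates the \emph{widths} of $C^+$ and $C^-$, so that the two sectors stay disjoint with a gap of the claimed order $s_L t^{-1}$. The width of $C_b$ is a fixed constant (order $1$ in angle, since $C_g$ is a fixed fat cone), and $Df_t^{-1}$ can distort angles by a factor up to $C_L t^2$; naively this would make the image sectors too wide. The resolution is that this worst-case distortion is \emph{not} realized near $\overline b$: $Df_t^{-1}$ fixes $\overline b$ and acts on the circle of directions with a derivative that is large only \emph{away} from $\overline b$ and its image; near the images of $C^+, C^-$ (which sit at angular distance $\sim t^{-(1-\alpha)}$ from $\overline b$) the relevant expansion of $\mathbb P Df_t^{-1}$ is only of order $t^{2}\cdot (t^{-(1-\alpha)})^2 = t^{2\alpha}$, wait — one must instead argue directly in coordinates: parametrize directions in $E^s$ by the ratio $\sigma = c_b/c_a$ (an affine chart covering everything except $\overline b$), in which $L^{-n}$ and $Df_t^{-1}$ both act affinely, $Df_t^{-1}$ being $\sigma \mapsto \sigma - 2\pi t\cos 2\pi x$ (a pure translation, \emph{no distortion of widths at all in this chart}) and $L^{-n}$ being multiplication by $\lambda_{ss}^{-n}\lambda_{ws}^n \to 0$. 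In the $\sigma$-chart, then, $L^{-n}C_b$ is an interval of length $\to 0$ around $\sigma(v_{ss})$, translated by $Df_{-t}$ to an interval of the \emph{same} length around $\sigma(v_{ss}) - 2\pi t\cos 2\pi x$; for $q\in G_t^{\alpha+}$ this center is $\leq -2\pi t^{1-\alpha} + O(1)$ and for $q\in G_t^{\alpha-}$ it is $\geq 2\pi t^{1-\alpha} - O(1)$, so the $\sigma$-distance between $C^+$ and $C^-$ is $\geq 4\pi t^{1-\alpha} - O(1)$. Finally I would translate back from the $\sigma$-chart to angles on the projective circle: since both intervals lie at $|\sigma| \gtrsim t^{1-\alpha} \geq 1$, i.e. near $\overline b$, and the chart-to-angle map has derivative $\sim 1/(1+\sigma^2)$ there, angular distances are $\sim \sigma$-distances divided by $\sigma^2$, giving $d_{\mathbb P}(C^+, C^-) \gtrsim t^{1-\alpha}/t^{2(1-\alpha)} = t^{-(1-\alpha)}$, which is $\geq s_L t^{-1}$ for an appropriate $s_L = s_L(L) > 0$ and all $t > t_1$, since $\alpha < 1/2 < 1$. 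This is even better than claimed, so choosing $t_1 \geq t_0$, $n_1\geq n_0$ large enough to absorb the $O(1)$ terms and the (exponentially small in $n$) width of $L^{-n}C_b$ finishes the proof. $\blacksquare$
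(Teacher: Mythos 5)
Your setup --- applying $L^{-n}$ to push $C_b$ into a narrow sector near $E^{ss}$, then observing that $Df_t^{-1}$ acts as a pure translation $\sigma \mapsto \sigma - 2\pi t\cos 2\pi x$ in the affine chart $\sigma = c_b/c_a$, yielding $\sigma$-coordinates in $(r_1 - 2\pi t, r_2 - 2\pi t^{1-\alpha})$ for $C^+$ and in $(r_1 + 2\pi t^{1-\alpha}, r_2 + 2\pi t)$ for $C^-$ --- matches the paper's proof and is correct. The error lies in the final conversion to projective distance. You evaluate the angular separation using the pair of directions at the \emph{near} ends of the two $\sigma$-intervals, $|\sigma| \sim t^{1-\alpha}$, and conclude $d_{\mathbb P}(C^+,C^-) \gtrsim t^{-(1-\alpha)}$. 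But the infimum of $d_{\mathbb P}(v_+,v_-)$ over $v_\pm \in C^\pm$ is attained at the \emph{far} ends, $|\sigma| \sim 2\pi t$: both sectors wrap around to within angular distance $\sim 1/(2\pi t)$ of $\overline b$, on opposite sides, and the short gap between them --- the one containing $\overline b$, which is the one realizing the projective distance --- has size $\sim 1/(\pi t)$. So the true separation is of order $t^{-1}$, strictly smaller than your claimed $t^{-(1-\alpha)}$; your bound, while it formally implies the lemma, is false, and the step producing it is unsound. Concretely, the heuristic ``angular distance $\sim$ $\sigma$-distance divided by $\sigma^2$'' requires an \emph{upper} bound on $\sigma^2$ to turn a lower bound on the $\sigma$-gap into a lower bound on the angle, whereas you insert the lower bound $\sigma^2 \geq t^{2(1-\alpha)}$; and the pair minimizing the $\sigma$-gap is not the pair minimizing the angle, since the $\sigma$-chart degenerates near $\overline b$.

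The paper sidesteps this by measuring chart distances in \emph{both} affine charts, via the elementary inequality $d_{\mathbb P}(v_+,v_-) \geq C_L\min\{|r_+-r_-|,\ |r_+^{-1}-r_-^{-1}|\}$. The second term, $|r_+^{-1}-r_-^{-1}| = 1/|\sigma_+| + 1/\sigma_-$, is the relevant one near $\overline b$: it is minimized at the far ends and gives $> 2/(2\pi t + K_0) \sim t^{-1}$, which is the correct order. Your argument can be repaired by adding the complementary chart $c_a/c_b$ to the analysis, so that lengths directly control angles near $\overline b$ rather than having to be divided by an uncontrolled $\sigma^2$.
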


\begin{proof}
Remember that $w_{ws}$ is strictly inside the good cone $C_g$, so it is strictly outside the bad cone $C_b$. This implies that there exists $n_1\geq n_0$ such that for all $n>n_1$ we have that $L^{-n}C_b$ is within $\frac {\theta_0}2$ distance from $E^{ss}$, which in turn implies that the angle between $\overline b$ and $L^{-n}C_b$ is greater than $\frac{\theta_0}2$.

We can assume that the $L^{-n}C_b$ is inside
\[
C_0:=\mathbb R\cdot\{\overline a+r\overline b:\ r\in(r_1,r_2)\},
\]
where $|r_{1,2}|<K_0$ for some $K_0>0$ (depends on $\theta_0$). 

Consider $t\geq t_0$. Since $Df_t^{-1}(L^{-n}(p))=Df_{-t}(L^{-n}(p))$ we get
\[
Df_t^{-1}(L^{-n}(p))(\overline a+r\overline b)=\overline a+(r-2\pi t\cos 2\pi x)\overline b.
\]
If $L^{-n}(p)\in G_t^{\alpha+}$ (or equivalently $(L^n\circ f_t)^{-1}(p)\in G_t^{\alpha+}$) then $\cos 2\pi x\in (t^{-\alpha},1]$ and since $r\in(r_1,r_2)$ it follows
\[
r-2\pi t\cos 2\pi x\in \left(r_1-2\pi t, r_2-2\pi t^{1-\alpha}\right).
\]
On the other hand, if $L^{-n}(p)\in G_t^{\alpha-}$ then $\cos 2\pi x\in [-1,-t^{-\alpha})$ and
\[
r-2\pi t\cos 2\pi x\in \left(r_1+2\pi t^{1-\alpha}, r_2+2\pi t\right).
\]
Let $v_+\in C^+$ and $v_-\in C^-$ be two unit vectors, 
\begin{align*}
v_+&=\pm\frac{\overline a +r_+\overline b}{\|\overline a +r_+\overline b\|}=\pm\frac{r_+^{-1}\overline a +\overline b}{\|r_+^{-1}\overline a +\overline b\|},\ \ r_+\in\left(r_1-2\pi t, r_2-2\pi t^{1-\alpha}\right),\\
v_-&=\pm\frac{\overline a +r_-\overline b}{\|\overline a +r_-\overline b\|}=\pm\frac{r_-^{-1}\overline a +\overline b}{\|r_-^{-1}\overline a +\overline b\|},\ \ r_-\in\left(r_1+2\pi t^{1-\alpha}, r_2+2\pi t\right).
\end{align*}
Then
\[
d_{\mathbb P}(v_+,v_-)\geq C_L\min\{|r_+-r_-|,|r_+^{-1}-r_-^{-1}|\}.
\]
Since
\[
|r_+-r_-|>4\pi t^{1-\alpha}-(r_2-r_1)>4\pi t^{1-\alpha}-2K_0
\]
and
\[
|r_+^{-1}-r_-^{-1}|>\frac 1{r_2+2\pi t}+\frac 1{2\pi t-r_1}>\frac 2{2\pi t+K_0}
\]
then choosing
\[
	t_1=\max\left\{t_0,\frac{K_0}{2\pi}+1,\left(\frac{K_0}{2\pi}\right)^{\frac 1{1-\alpha}}\right\}
\]
it follows that for any $t>t_1\geq t_0$, $d_{\mathbb P}(v_+,v_-)>C_Lt^{-1}$.

This gives the conclusion of the lemma.
\end{proof}

The expression $s_Lt^{-1}$ is the separation between $C^+$ and $C^-$. We have the following corollary.

\begin{corollary}
Suppose that $t>t_1$, $n>n_1$ and $\mathcal V$ is a family of unit vectors in $E^s$ such that $diam_{\mathbb P}(\mathcal V)<s_Lt^{-1}$. Then either $D(L^n\circ f_t)(p)v\in C_g$ for all $v\in\mathcal V$ and all $p\in G_t^{\alpha+}$, or $D(L^n\circ f_t)(p)v\in C_g$ for all $v\in\mathcal V$ and all $p\in G_t^{\alpha-}$.
\end{corollary}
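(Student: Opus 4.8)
The plan is to reduce the statement to the separation estimate of Lemma \ref{le:separation} by unwinding the definitions of $C^+$ and $C^-$. The first thing I would record is a reformulation of membership in these sets. Since $L^n\circ f_t$ is a diffeomorphism of $\mathbb T^3$, after the substitution $q=(L^n\circ f_t)^{-1}(p)$ the union defining $C^+$ ranges over all $q\in G_t^{\alpha+}$, and $D(L^n\circ f_t)^{-1}(p)=(D(L^n\circ f_t)(q))^{-1}$; consequently a unit vector $v\in E^s$ lies in $C^+$ if and only if there exists $q\in G_t^{\alpha+}$ with $D(L^n\circ f_t)(q)v\in C_b$. The contrapositive is the form I will actually use: $v\notin C^+$ if and only if $D(L^n\circ f_t)(p)v\in C_g$ for \emph{every} $p\in G_t^{\alpha+}$, because $C_g$ and $C_b=C_g^c$ partition $\mathbb P E^s$. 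The same reformulation holds verbatim with $C^-$ and $G_t^{\alpha-}$.

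Next I would show that $\mathcal V$ cannot meet both $C^+$ and $C^-$. Indeed, if $v_+\in\mathcal V\cap C^+$ and $v_-\in\mathcal V\cap C^-$, then on the one hand $d_{\mathbb P}(v_+,v_-)\le \mathrm{diam}_{\mathbb P}(\mathcal V)<s_Lt^{-1}$ by hypothesis, while on the other hand, since $t>t_1$ and $n>n_1$, Lemma \ref{le:separation} gives $d_{\mathbb P}(v_+,v_-)\ge d_{\mathbb P}(C^+,C^-)>s_Lt^{-1}$, a contradiction. Hence $\mathcal V\cap C^+=\emptyset$ or $\mathcal V\cap C^-=\emptyset$. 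In the first case the reformulation of the previous paragraph immediately yields $D(L^n\circ f_t)(p)v\in C_g$ for all $v\in\mathcal V$ and all $p\in G_t^{\alpha+}$, and in the second case it yields the analogous conclusion over $G_t^{\alpha-}$; this is exactly the claimed dichotomy.

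I do not expect a serious obstacle here, since the corollary is essentially a repackaging of Lemma \ref{le:separation}: the only point requiring a bit of care is the bookkeeping in the reformulation step, namely that the union over those $p$ with $(L^n\circ f_t)^{-1}(p)\in G_t^{\alpha\pm}$ really exhausts all $q$ in the corresponding good region (this uses invertibility of $L^n\circ f_t$), and that, because $E^s$ is a constant plane preserved by $f_t$, all the relevant fibres may be identified with a single copy of $\mathbb P E^s$, so that ``$v\notin C^+$'' is a meaningful condition on one vector. Once these are in place, the dichotomy is the elementary observation that a set whose diameter is strictly smaller than the gap between $C^+$ and $C^-$ cannot intersect both of them.
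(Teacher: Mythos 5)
Your proof is correct and follows exactly the same idea as the paper's (one-line) argument: the set $\mathcal V$, having diameter strictly less than the separation $s_Lt^{-1}$ from Lemma \ref{le:separation}, cannot meet both $C^+$ and $C^-$, and being disjoint from one of them is, by definition of $C^\pm$, the same as the stated conclusion over the corresponding good region. You simply spell out the change of variables $q=(L^n\circ f_t)^{-1}(p)$ and the complementarity $C_b=C_g^c$ that the paper leaves implicit.
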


\begin{proof}
$\mathcal V$ cannot intersect both $C^+$ and $C^-$ because of the bound on the diameter.
\end{proof}

\subsection{The Markov partition}

Fix $\mathcal P_0$ a Markov partition for $L$ (\cite{SinaiMarkov}); it is also a Markov partition for $L^n$. Then 
\[
\mathcal P=\{\text{connected component of }A\cap W^u(x)\text{ containing  }x:A\in\mathcal P_0, x\in\mathbb T^3\}
\]
is a partition subordinated to the unstable foliation $W^u$, that is, its atoms are inside unstable leaves; these are segments parallel to $v_u$, with the sizes uniformly bounded from above and below. We clearly have $L^n\mathcal P\prec L\mathcal P\prec\mathcal P$.

Since $L^n\circ f_t$ is homotopic to $L^n$, there is a continuous semiconjugacy $h:\mathbb T^3\rightarrow\mathbb T^3$ between $L^n\circ f_t$ and $L^n$, $h\circ L^n\circ f_t=L^n\circ h$. If $L^n\circ f_t$ is Anosov (condition \hyperlink{condA}{(A)} is satisfied) then $h$ is in fact a conjugacy. If $L^n\circ f_t$ is partially hyperbolic (condition \hyperlink{condPH}{(PH)} is satisfied), it is easy to see that $h$ takes homeomorphically strong unstable leaves of $L^n\circ f_t$ into unstable leaves of $L^n$ (it may take several strong unstable leaves to one unstable leaf if $h$ is not invertible).  Here is the argument: $h$ lifts to a proper semi-conjugacy  $\hat h:\mathbb R^3\to \mathbb R^3$ between a lift $F$ of $L^n\circ f_t$ and $L^n$, and thus $\hat h$ sends leaves of $\hat{W}^u_{t,n}$, the one dimensional unstable foliation of $F$, to unstable leaves of $L^n$. If $x,y\in \hat{h}^{-1}(p)$, then $\sup_{k\in\mathbb Z}d(F^kx,F^ky)<\infty$,  which by quasi-isometry of $\hat{W}^u_{t,n}$ \cite{DynCoh3Torus} prevents $x$ and $y$ to be on the same leaf. Therefore, for each $x\in\mathbb R^3$, $\hat h|:\hat{W}^u_{t,n}(x)\rightarrow \hat h(x)+E^u$ is continuous and injective, hence by invariance of the domain is an homeomorphism onto its image and $h(\hat{W}^u_{t,n}(x)) \subset \hat h(x)+E^u$ is an open interval, which again by quasi-isometry cannot have bounded diameter. This shows that $\hat h$ sends leaves of $\hat{W}^u_{t,n}$ to leaves of the unstable foliation of $L^n$, implying the corresponding analogous fact for $h$. See \cite{fisherpotriesamba} for more general discussion.  

For the rest of the article we assume that at least condition \hyperlink{condPH}{(PH)} is satisfied.

Let $\xi=h^{-1}(\mathcal P)\cap W^u_{t,n}$ be the partition subordinated to the unstable foliation of $L^n\circ f_t$, which projects to $\mathcal P$ under $h$ (this separates atoms in different unstable leaves of $W^u_{t,n}$). By eventually iterating $\mathcal P$ forward, we can assume that every atom of $\mathcal P$ is sufficiently large, and thus guarantee that every atom $P\in\xi$ intersects at least $20$ strips of $B_t^{\alpha}$ in the universal cover.

Let $\xi^{-1}=(L^n\circ f_t)^{-1}\xi$ be the preimage of $\xi$ under $L^n\circ f_t$. We denote by $\theta_u$ the angle between $E^u$ and the $yz$-plane. This angle is strictly greater than zero because the unstable line has irrational slope.

Observe that if $\gamma=\gamma_Lt\frac{\lambda_{ws}^n}{\lambda_u^n}$ is sufficiently small, then it is a good approximation to the angle of the cone $C^u_{\gamma}$, therefore if in addition $\gamma<\frac{\theta_u}2$ then the unstable foliation $W^u_{t,n}$ of $L^n\circ f_t$ is uniformly transverse to the foliation by $yz$-tori, with the angle larger that $\frac{\theta_u}2$. This will help us estimate the proportions of the unstable segments which are inside the good regions $G_t^{\alpha+}$, $G_t^{\alpha-}$ and the bad region $B_t^{\alpha}$.

We say that $S$ is a vertical strip in $\mathbb T^3$ of size $s$ if $S$ is the region between two $yz$-tori in $\mathbb T^3$, and the distance between the two $yz$-tori is $s$. We say that a $C^1$ curve $C$ in $\mathbb T^3$ is a $(S,\gamma)$-curve if $C$ is inside $S$, with the endpoints on the two $yz$-tori defining $S$, and with the angle between $TC$ and $E^u$ bounded from above by $\gamma$.

Let $\epsilon_M=\frac 1{20}$ (can be any small enough number). We will use the following lemma, since the proof is direct we will omit it.

\begin{lemma}
There exists $0<\gamma_M<\min\left\{\frac {\theta_u}2,\epsilon_M\right\}$ such that for any $\gamma<\gamma_M$, for any vertical strip $S$ of size $s$, $s\leq\frac 12$, and any $(S,\gamma)$-curve $C$, we have
\[
\left|l(C)-\frac s{\sin\theta_u}\right|<\epsilon_M\frac s{\sin\theta_u}.
\]
\end{lemma}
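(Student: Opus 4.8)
There exists $0<\gamma_M<\min\{\theta_u/2,\epsilon_M\}$ such that for any $\gamma<\gamma_M$, any vertical strip $S$ of size $s\le \tfrac12$, and any $(S,\gamma)$-curve $C$, one has $\bigl|\,l(C)-\tfrac{s}{\sin\theta_u}\,\bigr|<\epsilon_M\tfrac{s}{\sin\theta_u}$.

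Let me think about the geometry. A vertical strip $S$ is the region between two parallel $yz$-tori at distance $s$ apart — so $S$ is defined by $x\in[x_0,x_0+s]$ (working in suitable coordinates where the $x$-axis is orthogonal to the $yz$-planes; since we're on $\mathbb{T}^3=\mathbb{R}^3/\mathbb{Z}^3$ with the standard coordinates, the $yz$-tori are $\{x=\text{const}\}$ and the distance between two of them is just the difference in $x$-coordinates). A $(S,\gamma)$-curve $C$ runs from one bounding torus to the other, staying inside $S$, with tangent direction making angle $\le\gamma$ with $E^u$. The angle between $E^u$ and the $yz$-plane is $\theta_u>0$; equivalently, if $e_1$ is the unit normal to the $yz$-plane (the $x$-direction), the angle between $E^u$ and $e_1$ is $\tfrac{\pi}{2}-\theta_u$. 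So a unit vector $v$ along $E^u$ has $|\langle v,e_1\rangle| = \sin\theta_u$. A unit tangent vector $w=\dot C/\|\dot C\|$ making angle $\le\gamma$ with $E^u$ (or with $-E^u$, i.e. with the line) satisfies $|\langle w,e_1\rangle|\ge \sin\theta_u\cos\gamma - \sin\gamma =: \sin\theta_u - O(\gamma)$ and $|\langle w,e_1\rangle|\le\sin\theta_u\cos\gamma+\sin\gamma=\sin\theta_u+O(\gamma)$ by the angle-addition/subtraction inequalities.

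**Plan.** Parametrize $C$ by arc length, $C:[0,l(C)]\to\mathbb{T}^3$, with $\|\dot C(\tau)\|=1$. The $x$-coordinate $x(\tau)$ of $C$ satisfies $\dot x(\tau)=\langle \dot C(\tau),e_1\rangle$, hence $|\dot x(\tau)|\in[\sin\theta_u-\delta(\gamma),\,\sin\theta_u+\delta(\gamma)]$ where $\delta(\gamma)=\sin\theta_u(1-\cos\gamma)+\sin\gamma\to 0$ as $\gamma\to0$. (Here I use that the angle between $\dot C(\tau)$ and the line spanned by $E^u$ is $\le\gamma$, so $\dot C(\tau)$ makes angle within $\gamma$ of $\pm v$, giving the two-sided bound on $|\langle\dot C(\tau),e_1\rangle|$.) Since $C$ goes from one bounding torus to the other without leaving the strip, and since $\dot x$ cannot vanish once $\gamma<\theta_u/2$ (so $|\dot x|\ge\sin\theta_u-\delta(\gamma)>0$), $x(\tau)$ is strictly monotone; therefore the total variation of $x$ along $C$ equals exactly $s$, i.e. $\int_0^{l(C)}|\dot x(\tau)|\,d\tau = s$. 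Combining with the pointwise bounds,
\[
l(C)\cdot(\sin\theta_u-\delta(\gamma))\ \le\ s\ \le\ l(C)\cdot(\sin\theta_u+\delta(\gamma)),
\]
so $\dfrac{s}{\sin\theta_u+\delta(\gamma)}\le l(C)\le \dfrac{s}{\sin\theta_u-\delta(\gamma)}$, which gives
\[
\left| l(C)-\frac{s}{\sin\theta_u}\right|\ \le\ \frac{s}{\sin\theta_u}\cdot\frac{\delta(\gamma)}{\sin\theta_u-\delta(\gamma)}.
\]
Now choose $\gamma_M\in(0,\min\{\theta_u/2,\epsilon_M\})$ small enough that $\dfrac{\delta(\gamma_M)}{\sin\theta_u-\delta(\gamma_M)}<\epsilon_M$; since $\delta$ is continuous, increasing, and $\delta(0)=0$, such $\gamma_M$ exists. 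Then for all $\gamma<\gamma_M$ the desired inequality holds (with strictness, as $\delta$ is strictly increasing in $\gamma$ near $0$, or simply by taking the $<$ in the choice of $\gamma_M$ with a bit of room).

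**Main obstacle.** There's no serious analytic obstacle; the one point requiring a little care is the claim that the monotonicity of $x(\tau)$ forces $\int_0^{l(C)}|\dot x|\,d\tau$ to equal exactly $s$ — this uses that $C$ has its endpoints on the two distinct bounding tori and stays in the closed strip, so $x$ runs monotonically from $x_0$ to $x_0+s$ (or the reverse) with no backtracking. Once $\gamma<\theta_u/2$ guarantees $|\dot x|>0$ everywhere, monotonicity is automatic and this identity is immediate from the fundamental theorem of calculus. One should also note the coordinate/metric subtlety: ``size $s$'' means the Riemannian distance between the two $yz$-tori; in the standard flat metric on $\mathbb{T}^3$ this is the gap in the $x$-coordinate, and the $x$-direction $e_1$ is a unit vector orthogonal to the $yz$-planes, so $\dot x = \langle\dot C,e_1\rangle$ exactly. (If a non-standard metric were in play, one would replace $e_1$ by the unit normal field to the $yz$-foliation and absorb the bounded variation of $\theta_u$ and of the normal direction into $\delta(\gamma)$ together with an extra constant; but in the setting of the paper the metric is the flat one, so this is unnecessary.) This is exactly the kind of ``direct'' verification the authors chose to omit.
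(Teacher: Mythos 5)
The paper states that the proof of this lemma is direct and omits it, so there is no argument to compare against. Your proof is correct and is precisely the direct computation the authors intended: parametrize by arc length, bound $|\dot x|$ by $\sin\theta_u\pm\delta(\gamma)$ using the angle condition, use monotonicity of $x(\tau)$ to get $\int|\dot x|\,d\tau=s$, and then shrink $\gamma_M$ so that $\delta(\gamma_M)/(\sin\theta_u-\delta(\gamma_M))<\epsilon_M$.
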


The lemma says that the length of an $(S,\gamma)$ curve can be $\epsilon_M$-approximated by the corresponding $E^u$ segment inside $S$, as long as the size of $S$ is smaller then one half.

\begin{definition}\hypertarget{condM}{}
If the following relation
\begin{equation}\label{eq:conditionM}
\gamma=\gamma_Lt\frac{\lambda_{ws}^n}{\lambda_u^n}<\gamma_M
\end{equation}
is satisfied, we say that we have the {\bf condition (M)}.
\end{definition}

Let us remark that the condition \hyperlink{condM}{(M)} is just a slight refinement of the condition \hyperlink{condPH}{(PH)}, and it says that the new unstable foliation $W^u_{t,n}$ of $L^n\circ f_t$ is close enough to the initial unstable foliation $E^u$ of $L$.

The useful properties of the partition $\xi$ are included in the next lemma. Recall that $\mu^{\xi}_P$ is the disintegration of the volume $\mu$ along the atom $P$ of the partition $\xi$, which is absolutely continuous with respect to Lebesgue on the unstable leaf and the densities satisfy \eqref{eq:density}.

\begin{lemma}\label{le:markov}
There exist $n_2\geq n_1$, $t_{\alpha}>0$ such that for any $n> n_2$ and $t^{\alpha}>t_{\alpha}$ if the condition \hyperlink{condM}{(M)} is satisfied then the partition $\xi$ defined above for $L^n\circ f_t$ satisfies the following properties.
\begin{enumerate}
\item
$\xi$ is subordinated to the strong unstable foliation $W^u_{t,n}$ of $L^n\circ f_t$. In particular $\xi^{-1}=(L^n\circ f_t)^{-1}(\xi)\succ \xi$, and the atoms of $\xi$ have the length uniformly bounded from above and below, meaning that there exist $d_L,D_L>0$ independent of $t$ and $n$ such that
$$
d_L<l(P)<D_L,\ \ \forall P\in\xi,
$$
where $l(P)$ is the length of the atom $P$.
\item
For any $P\in\xi$ we divide the ``pre-atoms'' from $\xi^{-1}$ inside $P$ into three subsets:
\begin{eqnarray*}
\mathcal B(P)&=&\{P'\in\xi^{-1}:\ P'\subset P,\ P'\cap B_t^{\alpha}\neq\emptyset\},\\
\mathcal G^+(P)&=&\{P'\in\xi^{-1}:\ P'\subset P\cap G_t^{\alpha+}\},\\
\mathcal G^-(P)&=&\{P'\in\xi^{-1}:\ P'\subset P\cap G_t^{\alpha-}\}.
\end{eqnarray*}
There exists $\delta_L>0$ such that we have the following bounds on the measures of the three subsets:
\begin{eqnarray*}
\mu_P^{\xi}\left(\cup_{P'\in\mathcal B(P)}P'\right)&<&\delta_L\left(t^{-\alpha}+\lambda_u^{-n}\right),\\
\mu_P^{\xi}\left(\cup_{P'\in\mathcal G^+(P)}P'\right)&>&\frac 13,\\
\mu_P^{\xi}\left(\cup_{P'\in\mathcal G^-(P)}P'\right)&>&\frac 13.\\
\end{eqnarray*}
\end{enumerate}
\end{lemma}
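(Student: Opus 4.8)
The two assertions of the lemma are of different natures: part (1) is essentially a bookkeeping statement about the partition $\xi$ coming from the Markov partition $\mathcal{P}$ of $L$ pulled back by the semiconjugacy $h$, while part (2) is a quantitative measure estimate that combines the transversality of $W^u_{t,n}$ to the $yz$-tori (guaranteed by condition \hyperlink{condM}{(M)}) with the density bounds \eqref{eq:density}. I would treat them separately.

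For part (1), the fact that $\xi$ is subordinated to $W^u_{t,n}$ is by construction: $\xi=h^{-1}(\mathcal{P})\cap W^u_{t,n}$, and the discussion preceding the lemma shows $h$ carries strong unstable leaves of $L^n\circ f_t$ homeomorphically onto unstable leaves of $L^n$, so atoms of $\xi$ are precisely the connected components of the $h$-preimages of atoms of $\mathcal{P}$ inside single leaves of $W^u_{t,n}$. The relation $\xi^{-1}=(L^n\circ f_t)^{-1}(\xi)\succ\xi$ follows from $L^n\mathcal{P}\prec\mathcal{P}$ and the intertwining $h\circ(L^n\circ f_t)=L^n\circ h$. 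For the uniform length bounds $d_L<l(P)<D_L$: the atoms of $\mathcal{P}$ have uniformly bounded Euclidean length along $E^u$; the atoms of $\xi$ are $(S,\gamma)$-curves in vertical strips whose widths are controlled by those of $\mathcal{P}$'s atoms (the projection parallel to $E^u$ is within factor $\frac{1+\gamma}{1-\gamma}$ of the identity, and the projection to the $x$-axis is bi-Lipschitz with constants depending only on $\theta_u$), so one invokes the preceding length lemma with $\gamma<\gamma_M$ to get $l(P)$ trapped in an interval depending only on $L$. One must also recall that $\mathcal{P}$ was iterated forward so that every atom crosses at least $20$ strips of $B_t^\alpha$.

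For part (2), fix $P\in\xi$, viewed as an arc in a leaf of $W^u_{t,n}$. The pre-atoms $P'\in\xi^{-1}$ inside $P$ are the pieces into which $(L^n\circ f_t)^{-1}(\xi)$ subdivides $P$; geometrically they are obtained by refining $P$ by the $(L^n\circ f_t)^{-1}$-images of the partition boundaries, but for the measure estimate the relevant point is only which pre-atoms meet $B_t^\alpha$ versus which lie entirely in $G_t^{\alpha+}$ or $G_t^{\alpha-}$. Since the regions $B_t^\alpha,G_t^{\alpha\pm}$ are unions of vertical strips determined by the $x$-coordinate, and $P$ is a $(S,\gamma)$-curve uniformly transverse to the $yz$-tori with angle $>\theta_u/2$, the $\mu_P^\xi$-measure of any sub-collection of pre-atoms is, up to the density distortion factor $\frac{1+\gamma}{1-\gamma}<1+\epsilon_M$ from \eqref{eq:density} and up to the length-approximation error $\epsilon_M$ from the $(S,\gamma)$-curve lemma, comparable to the normalized Lebesgue measure (in the $x$-coordinate) of the corresponding union of strips intersected with $P$'s $x$-extent. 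The bad set $B_t^\alpha$ has $x$-measure $O(t^{-\alpha})$ inside one period; however the pre-atoms meeting $B_t^\alpha$ also include those whose $\xi^{-1}$-boundary points (which are $(L^n\circ f_t)^{-1}$-images of $\xi$-endpoints, hence $\lambda_u^{-n}$-dense along the leaf, since $L^n\circ f_t$ expands unstable lengths by $\approx\lambda_u^n$) happen to straddle a strip of $B_t^\alpha$; each such extra pre-atom has length $O(\lambda_u^{-n})$ and there are boundedly many relevant strips, giving the correction term $\lambda_u^{-n}$. Hence $\mu_P^\xi(\cup\mathcal{B}(P))<\delta_L(t^{-\alpha}+\lambda_u^{-n})$ for suitable $\delta_L$ once $t^\alpha>t_\alpha$ and $n>n_2$. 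Then $G_t^{\alpha+}$ and $G_t^{\alpha-}$ each occupy $x$-measure close to $\frac12(1-O(t^{-\alpha}))$; subtracting the bad set and the density/length distortions, and noting that each good pre-atom is by definition a full pre-atom contained in a single good strip, one gets both $\mu_P^\xi(\cup\mathcal{G}^\pm(P))>\frac13$ provided $\epsilon_M=\frac1{20}$, $t^\alpha$, and $n$ are large enough (here the hypothesis that $P$ crosses at least $20$ strips is what makes the ``whole pre-atoms in a good strip'' fraction close to the naive $\frac12$ rather than being spoiled by boundary effects).

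\textbf{Main obstacle.} The delicate point is part (2): carefully accounting for which pre-atoms from $\xi^{-1}$ must be thrown into $\mathcal{B}(P)$ and showing the total is genuinely $O(t^{-\alpha}+\lambda_u^{-n})$ rather than something larger. The subtlety is that a pre-atom counts as ``bad'' if it \emph{touches} $B_t^\alpha$ at all, so one pays not only for the measure of $B_t^\alpha\cap P$ itself but also for the pre-atoms straddling its boundary; controlling the number and size of those straddling pieces is exactly where the $\lambda_u^{-n}$ term and the ``at least $20$ strips'' normalization enter, and where one must be careful that the expansion estimates of Proposition \ref{prop:expansion} and the transversality from condition \hyperlink{condM}{(M)} are used with the right uniformity in $t$ and $n$.
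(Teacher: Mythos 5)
Your argument matches the paper's proof in both structure and substance: part (1) from the semiconjugacy and the stability of the unstable bundle under condition (M), and part (2) from transversality to the $yz$-tori, the $(S,\gamma)$-curve length lemma, the density bounds \eqref{eq:density}, accounting for straddling pre-atoms of size $O(\lambda_u^{-n})$ (the paper formalizes this by fattening $B_t^{\alpha}$ by the maximal pre-atom diameter), and the "$\geq 20$ strips" normalization to absorb boundary effects. The proposal is correct and essentially the same approach.
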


\begin{proof}
Part $(1)$ follows from the semiconjugacy $h$, once we choose the starting partition $\mathcal P$ for $L$. The bounds on the size of the atoms holds because the (semi)conjugacy $h$ preserves the (center)stable planes, and the new unstable bundle is close to the original one (cf. \cite{fisherpotriesamba}).

We now prove part $(2)$. Recall that $\xi$ is chosen so that every atom $P\in\xi$ intersects at least $20$ strips of $B_t^{\alpha}$ in the universal cover.

Since $(L^n\circ f_t)^{-1}$ contracts the unstable foliation by at least $\lambda_u^{-n}\frac{1-\gamma}{1+\gamma}$, we have that the size of every atom $P'\in\xi^{-1}$ is bounded from above by $D_L\lambda_u^{-n}\frac{1-\gamma}{1+\gamma}$. Let $B$ be the two vertical strips of the torus obtained by enlarging the strips of $B_t^{\alpha}$ by $D_L\lambda_u^{-n}\frac{1-\gamma}{1+\gamma}$ on each side. Then the complement of $B$ will consist of two vertical strips, $G^+\subset G_t^{\alpha+}$ and $G^-\subset G_t^{\alpha-}$. Since the size of the strips of $B_t^{\alpha}$ is bounded from above by $C_Lt^{-\alpha}$, we have that there exists $\delta_0>0$ such that the size $\delta$ of the strips of $B$ is bounded from above by $\delta_0\left(t^{-\alpha}+\lambda_u^{-n}\right)$. If $n$ and $t^{\alpha}$ are sufficiently large, we can assume that the size of the strips of $B$ satisfies
\[
\delta<\delta_0\left(t^{-\alpha}+\lambda_u^{-n}\right)<\frac{\epsilon_M}2.
\]
Then the sizes of the vertical strips $G^+$ and $G^-$ are bounded by below by $\frac{1-\epsilon_M}2$.

Let $P\in\xi$, and assume that $P$ intersects the strips of $B$ in the universal cover $k\geq 20$ times. Then we have the following:
\begin{itemize}
\item
$P$ is inside $k+1$ vertical strips of size $\frac 12$, and the length of each piece of $P$ in each strip is in $\left(\frac {1-\epsilon_M}{2\sin\theta_u},\frac {1+\epsilon_M}{2\sin\theta_u}\right)$. Also $P$ crosses completely at least $k-1$ such strips. Then the length of $P$ satisfies the bounds
\[
\frac{(k-1)(1-\epsilon_M)}{2\sin\theta_u}<l(P)<\frac{(k+1)(1+\epsilon_M)}{2\sin\theta_u}.
\]
\item
$\mathcal B(p)$ is contained in $k$ $(B_i,\gamma)$-curves, where $B_i,\ \ i=1,2$ are the 2 components of $B$. The the length of $\mathcal B(P)$ satisfies
\[
l(\mathcal B(P))<\frac{k\delta(1+\epsilon_M)}{\sin\theta_u}<\frac{k\delta_0(1+\epsilon_M)}{\sin\theta_u}(t^{-\alpha}+\lambda_u^{-n}).
\]
The relative length of $l(\mathcal B(P))$ in $P$ satisfies
\[
\frac{l(\mathcal B(P))}{l(P)}<\frac{2k\delta_0(1+\epsilon_M)}{(k-1)(1-\epsilon_M)}(t^{-\alpha}+\lambda_u^{-n})<\frac{840}{399}\delta_0(t^{-\alpha}+\lambda_u^{-n}).
\]
Remembering the bounds on the densities of $m_P$ with respect to Lebesgue in \ref{eq:density}, we get
\[
\mu_P^{\xi}(\mathcal B(P))<\frac{1+\gamma}{1-\gamma}\frac{840\delta_0}{399}(t^{-\alpha}+\lambda_u^{-n})<\frac{1+\gamma_M}{1-\gamma_M}\frac{840\delta_0}{399}(t^{-\alpha}+\lambda_u^{-n}).
\]
Let $\delta_L=\frac{1+\gamma_M}{1-\gamma_M}\frac{840\delta_0}{399}$.
\item
$\mathcal G^+(P)$ contains $\left[\frac{k-1}2\right]$ $(G^+,\gamma)$-curves, where $[x]$ is the integer part of $x$. Then the length of $\mathcal G^+(P)$ satisfies
\[
l(\mathcal G^+(P))>\left[\frac{k-1}2\right]\frac{(1-\epsilon_M)(1+\epsilon_M)}{2\sin\theta_u}>\frac{(k-3)(1-\epsilon_M^2)}{4\sin\theta_u}.
\]
The relative length of $\mathcal G^+(P)$ in $P$ satisfies
\[
\frac{l(\mathcal G^+(P))}{l(P)}>\frac{(k-3)(1-\epsilon_M^2)}{2(k+1)(1+\epsilon_M)},
\]
and the relative measure of $\mathcal G^+(P)$ in $P$ satisfies
\[
\mu_P^{\xi}(\mathcal G^+(P))>\frac 12\frac{k-3}{k+1}\frac{1-\epsilon_M^2}{1+\epsilon_M}\frac{1-\gamma_M}{1+\gamma_M}>\frac{1\cdot 17\cdot 19\cdot 19}{2\cdot 21\cdot 21\cdot 21}>\frac 13.
\]
\item The estimate of the relative measure of $\mathcal G^-(P)$ in $P$ is similar.
\end{itemize}
\end{proof}

\subsection{Preservation of Lipschitz vector fields}

Now we consider unit vector fields $X$ inside $E^s$, defined on the atoms of the partition $\xi$. Alternatively we can think of $X$ as a section in the projective bundle $\mathbb PE^s$ over the atoms of the partition $\xi$. Observe that $\mathbb PL^n|_{E^s}$ and $\mathbb PL^{-n}|_{E^s}$ have the Lipschitz constant $C_L\frac{\lambda_{ws}^n}{\lambda_{ss}^n}$. We also have that $\mathbb PDf_t(p)|_{E^s}$ and $\mathbb PDf_{-t}(p)|_{E^s}$ have the Lipschitz constant $C_Lt^2$ (as a function of both the unit vector $v$ and the base point on the torus $p$). The next step is to find a Lipschitz constant $l$, such that the $l$-Lipschitz vector fields are preserved by $\mathbb P(L^n\circ Df_t|_{E^s})$.

\begin{lemma}\label{le:lipschitz}
There exists a constant $l_L>0$ such that if $l=l_Lt^2\lambda_{ws}^{2n}<1$ then the image of $l$-Lipschitz unit vector fields in $\mathbb PE^s$ under the push-forward by the projectivization $\mathbb P(L^n\circ Df_t|_{E^s})$ are also $l$-Lipschitz unit vector fields.
\end{lemma}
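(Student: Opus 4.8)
The plan is to show that one push-forward preserves the $l$-Lipschitz property; iterating this, every $Y^k$ of an $l$-Lipschitz field is again $l$-Lipschitz, so that the family of $l$-Lipschitz unit vector fields in $\mathbb{P}E^s$ over the atoms of $\xi$ is invariant under the cocycle $\mathbb{P}(L^n\circ Df_t|_{E^s})$. So let $X$ be an $l$-Lipschitz unit field over an atom of $\xi$, let $P'\in\xi^{-1}$ be contained in that atom, and let $g$ be the corresponding inverse branch of $L^n\circ f_t$, so that the push-forward is $Y(m)=\mathbb{P}\big(D(L^n\circ f_t)(gm)|_{E^s}\big)\cdot X(gm)$ for $m$ in the arc $(L^n\circ f_t)(P')\in\xi$. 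I fix $m,m'$ in that arc and set $p=gm$, $p'=gm'$, two points on the arc $P'$ inside an unstable leaf of $W^u_{t,n}$.

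The estimate combines two facts already recorded above. First, writing $A:=D(L^n\circ f_t)|_{E^s}=L^n|_{E^s}\circ Df_t|_{E^s}$, the projective map $\mathbb{P}A$ is Lipschitz in its vector variable and --- through the base point of $Df_t$ --- in its base variable, in each case with constant at most $\big(\mathrm{Lip}\,\mathbb{P}L^n|_{E^s}\big)\cdot\big(\mathrm{Lip}\,\mathbb{P}Df_t|_{E^s}\big)=C_L\tfrac{\lambda_{ws}^n}{\lambda_{ss}^n}\cdot C_Lt^2$; since $|\det L|=1$ gives $\lambda_{ws}^n/\lambda_{ss}^n=\lambda_u^n\lambda_{ws}^{2n}$, both constants are at most $C_L^2t^2\lambda_u^n\lambda_{ws}^{2n}$. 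Secondly, by the Corollary above on the expansion rate of $L^n\circ f_t$ along $W^u_{t,n}$, the map $L^n\circ f_t$ expands $W^u_{t,n}$ by a factor at least $\lambda_u^n\tfrac{1-\gamma}{1+\gamma}$, so the inverse branch $g$ contracts unstable arclength: $d_u(p,p')\le\lambda_u^{-n}\tfrac{1+\gamma}{1-\gamma}\,d_u(m,m')$, and the ambient distance between $p$ and $p'$ is also bounded by $d_u(p,p')$.

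Now the triangle inequality in $\mathbb{P}E^s$ gives
\[
d_{\mathbb{P}}\big(Y(m),Y(m')\big)\le d_{\mathbb{P}}\big(\mathbb{P}A(p)\cdot X(p),\,\mathbb{P}A(p)\cdot X(p')\big)+d_{\mathbb{P}}\big(\mathbb{P}A(p)\cdot X(p'),\,\mathbb{P}A(p')\cdot X(p')\big).
\]
The first term is at most $C_L^2t^2\lambda_u^n\lambda_{ws}^{2n}\cdot d_{\mathbb{P}}(X(p),X(p'))\le C_L^2t^2\lambda_u^n\lambda_{ws}^{2n}\cdot l\cdot\lambda_u^{-n}\tfrac{1+\gamma}{1-\gamma}\,d_u(m,m')$, using Lipschitzness of $\mathbb{P}A(p)$ in the vector, $l$-Lipschitzness of $X$ and the backward contraction; the second is at most $C_L^2t^2\lambda_u^n\lambda_{ws}^{2n}\cdot\lambda_u^{-n}\tfrac{1+\gamma}{1-\gamma}\,d_u(m,m')$, using Lipschitzness of $\mathbb{P}A(\cdot)\cdot X(p')$ in the base point and the same contraction. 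In both, the factors $\lambda_u^{\pm n}$ cancel, so
\[
d_{\mathbb{P}}\big(Y(m),Y(m')\big)\le C_L^2\,\tfrac{1+\gamma}{1-\gamma}\,t^2\lambda_{ws}^{2n}\,(1+l)\,d_u(m,m')\le 2\,C_L^2\,\tfrac{1+\gamma}{1-\gamma}\,t^2\lambda_{ws}^{2n}\,d_u(m,m'),
\]
using $l<1$. Under condition \hyperlink{condPH}{(PH)} the factor $\tfrac{1+\gamma}{1-\gamma}$ is bounded (it is $<2$ once $\gamma<\tfrac13$, which holds in every parameter range in which the lemma is applied), so taking $l_L$ with $2C_L^2\tfrac{1+\gamma}{1-\gamma}\le l_L$ makes the right-hand side $\le l_Lt^2\lambda_{ws}^{2n}\,d_u(m,m')=l\,d_u(m,m')$, i.e. $Y$ is $l$-Lipschitz, and induction on $k$ finishes the proof.

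The one genuine point of the argument is the cancellation in the last display: the projective distortion of $L^n|_{E^s}$ grows like $\lambda_u^n\lambda_{ws}^{2n}$, and it is offset exactly by the $\lambda_u^{-n}$ backward contraction along $W^u_{t,n}$; this is why the correct scale for the invariant Lipschitz constant is $t^2\lambda_{ws}^{2n}$ rather than something carrying a $\lambda_u^n$. Everything else is routine bookkeeping with the triangle inequality and the Lipschitz constants of $\mathbb{P}L^n|_{E^s}$ and $\mathbb{P}Df_t|_{E^s}$ recorded just above the lemma; the only loose end is keeping $\tfrac{1+\gamma}{1-\gamma}$ controlled, which condition \hyperlink{condPH}{(PH)} --- and a fortiori condition \hyperlink{condM}{(M)} --- guarantees wherever the lemma is used.
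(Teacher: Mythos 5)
Your proof is correct and follows essentially the same route as the paper's: split the projective distance via the triangle inequality into a vector-variable term and a base-point term, bound each by the product of the projective Lipschitz constants of $L^n|_{E^s}$ and $Df_t|_{E^s}$, contract back along $W^u_{t,n}$ by $\lambda_u^{-n}$, and use $\lambda_{ss}\lambda_{ws}\lambda_u=1$ to land on $t^2\lambda_{ws}^{2n}$. The only cosmetic difference is that you expand $\lambda_{ws}^n/\lambda_{ss}^n=\lambda_u^n\lambda_{ws}^{2n}$ up front and cancel $\lambda_u^{\pm n}$ explicitly, while the paper keeps the ratio until the end; you also carry the $(1+\gamma)/(1-\gamma)$ factor visibly instead of absorbing it into $C_L$, which is slightly more careful bookkeeping and harmless under condition (M).
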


\begin{proof}
Denote $f_{t,n}=L^n\circ f_t$. Let us evaluate the push forward of a $l$-Lipschitz unit vector field $X$ under $\mathbb PDf_{t,n}$:
\begin{eqnarray*}
d((\mathbb PDf_{t,n}X)(p),(\mathbb PDf_{t,n}X)(q))&=&d(\mathbb PDf_{t,n}(f_{t,n}^{-1}(p))X(f_{t,n}^{-1}(p)),\mathbb PDf_{t,n}(f_{t,n}^{-1}(q))X(f_{t,n}^{-1}(q)))\\
&\leq&d(\mathbb PDf_{t,n}(f_{t,n}^{-1}(p))X(f_{t,n}^{-1}(p)),\mathbb PDf_{t,n}(f_{t,n}^{-1}(p))X(f_{t,n}^{-1}(q)))+\\
& &\ \ +d(\mathbb PDf_{t,n}(f_{t,n}^{-1}(p))X(f_{t,n}^{-1}(q)),\mathbb PDf_{t,n}(f_{t,n}^{-1}(q))X(f_{t,n}^{-1}(q))).
\end{eqnarray*}

Let $p_n=f_{t,n}^{-1}(p)$, $q_n=f_{t,n}^{-1}(q)$. Observe that $d(p_n,q_n)\leq C_L\lambda_u^{-n}$. We continue estimating:
\begin{eqnarray*}
d(\mathbb PDf_{t,n}(p_n)X(p_n),\mathbb PDf_{t,n}(p_n)X(q_n))&=& d(\mathbb PL^n\left(\mathbb PDf_t(p_n)X(p_n)\right), \mathbb PL^n\left(\mathbb PDf_t(p_n)X(q_n)\right)\\
&\leq&C_L\frac{\lambda_{ws}^n}{\lambda_{ss}^n}d(\mathbb PDf_t(p_n)X(p_n),\mathbb PDf_t(p_n)X(q_n))\\
&\leq&C_Lt^2\frac{\lambda_{ws}^n}{\lambda_{ss}^n}d(X(p_n),X(q_n))\\
&\leq&C_Lt^2\frac{\lambda_{ws}^n}{\lambda_{ss}^n\lambda_u^n}ld(p,q).
\end{eqnarray*}

On the other hand
\begin{eqnarray*}
d(\mathbb PDf_{t,n}(p_n)X(q_n),\mathbb PDf_{t,n}(q_n)X(q_n))&=& d(\mathbb PL^n\left(\mathbb PDf_t(p_n)X(q_n)\right), \mathbb PL^n\left(\mathbb PDf_t(q_n)X(q_n)\right)\\
&\leq&C_L\frac{\lambda_{ws}^n}{\lambda_{ss}^n}d(\mathbb PDf_t(p_n)X(q_n),\mathbb PDf_t(q_n)X(q_n))\\
&\leq&C_Lt^2\frac{\lambda_{ws}^n}{\lambda_{ss}^n}d(p_n,q_n)\\
&\leq&C_Lt^2\frac{\lambda_{ws}^n}{\lambda_{ss}^n\lambda_u^n}d(p,q).
\end{eqnarray*}

Since the product of the three eigenvalues is one we have that $\lambda_{ss}\lambda_u=\lambda_{ws}^{-1}$. Putting together the two estimates from above we obtain that if $X$ is a $l$-Lipschitz unit vector field on the element $P$ of $\xi$, then $\mathbb P(Df_t\circ L^n)X$ is $C_Lt^2\lambda_{ws}^{2n}(l+1)$-Lipschitz on $f_t\circ L^n(P)$.

We can now take $l_L=2C_L$ and then for $l=l_Lt^2\lambda_{ws}^{2n}<1$ we can easily check that the conclusion holds.
\end{proof}

\begin{definition}\hypertarget{condL}{}
If the relation $l_Lt^2\lambda_{ws}^{2n}<1$ is satisfied then we say that we have the {\bf Condition (L)}.
\end{definition}

\begin{remark}
If the vector field $X$ on $P\in\xi$ is Lipschitz with constant $l=l_Lt^2\lambda_{ws}^{2n}$ then, since the diameter of $P$ is bounded by $D_L$, we get that $\hbox{var}(X)< l_LD_Lt^2\lambda_{ws}^{2n}$.
\end{remark}

\section{Proof of Theorem A}

Now we are ready for the proof of \hyperlink{theoremA}{Theorem A}. 

\begin{proof}[Proof of \hyperlink{theoremA}{Theorem A}]

It is more convenient to work with the inverse of $L$, which will have two stable exponents, and in this case we have to show that $L^n\circ f_t$ decreases the strong stable exponent. Since the perturbations keeps the sum of the two stable exponents constant, this is equivalent to proving that the weak stable exponent increases.

Let us assume that $n>n_2(\geq n_1\geq n_0)$, $t>t_1(\geq t_0)$ and $t^{\alpha}>t_{\alpha}$.

We assume also that the conditions \hyperlink{condA}{(A)}, \hyperlink{condM}{(M)} and \hyperlink{condM}{(L)} are satisfied, so it holds
\begin{align*}
&t\lambda_{ws}^n<\frac 1{a_L},\\
&t\frac{\lambda_{ws}^n}{\lambda_u^n}<\frac{\gamma_M}{\gamma_L} \shortintertext{and}
&t^2\lambda_{ws}^{2n}<\frac 1{l_L}.
\end{align*}

We also assume that the variation of the $l$-Lipschitz unit vectorfields on the atoms of the partition $\xi$ is smaller that the separation of the preimages of the bad cone:
\begin{align*}\hypertarget{condSL}{}
&l_LD_Lt^2\lambda_{ws}^{2n}<s_Lt^{-1}, \shortintertext{or}
&t^3\lambda_{ws}^{2n}<\frac{s_L}{l_LD_L}\hspace{2cm} \text{(we call this {\bf Condition (SL)})}
\end{align*}

Let us remark that all the conditions above are satisfied if we take
\begin{equation}\label{eq:tn}
t=\lambda_{ws}^{-n\nu}
\end{equation}
for some $\nu\in\left(0,\frac 23\right)$ and for $n$ sufficiently large.

From Lemma \ref{le:markov} we have that
\begin{eqnarray*}
\mu_P^{\xi}\left(\cup_{P'\in\mathcal B(P)}P'\right)&<&\delta_L\left(t^{-\alpha}+\lambda_u^{-n}\right)\\
\mu_P^{\xi}\left(\cup_{P'\in\mathcal G^+(P)}P'\right)&>&\frac 13\\
\mu_P^{\xi}\left(\cup_{P'\in\mathcal G^-(P)}P'\right)&>&\frac 13.
\end{eqnarray*}
Since $t^{-\alpha}=\lambda_{ws}^{n\alpha\nu}>\lambda_u^{-n}$, the first relation can be rewriten as
\[
\mu_P^{\xi}\left(\cup_{P'\in\mathcal B(P)}P'\right)< 2\delta_Lt^{-\alpha}:=\delta.
\]
Let $\mathcal{X}$ be the family of unit vectorfields defined on atoms of $\xi$ and which are $l$-Lipschitz on the atom, $l=l_Lt^2\lambda_{ws}^{2n}<1$:
\[
\mathcal{X}=\{ X:P\rightarrow \mathbb PE^s:\ \ P\in\xi,\ X\hbox{ is $l$-Lipschitz}\}.
\]
Lemma \ref{le:lipschitz} shows that the family $\mathcal X$ is invariant under the pushed forward of the projectivization of $L^n\circ f_t$ (in the sense that the image of a $l$-Lipschitz unit vectorfield on $P\in\xi$ is several $l$-Lipschitz unit vectorfields defined on the atoms of $\xi$ from $(L^n\circ f_t)(P)$).

We consider the derivative cocycle of $L^n\circ f_t$ restricted to $E^s$, $D(L^n\circ f_t)|_{E^s}$.

We will check that the family of vector fields $\mathcal X$ is adapted as in Definition \ref{def:adapted}, for $\delta=2\delta_Lt^{-\alpha}$ and $\lambda=C_L\lambda_{ws}^{n-n\nu(1-\alpha)}$. We divide the vectorfields into ``good'' and ``bad'' in the following way:
\begin{eqnarray*}
\mathcal X^g&=&\{ X\in\mathcal X:\ X(p)\in C_g,\ \forall p\in P, \hbox{ where $P$ is the domain of $X$}\}\\
\mathcal X^b&=&\{ X\in\mathcal X:\ \exists p\in P \hbox{ with } X(p)\notin C_g, \hbox{ where $P$ is the domain of $X$}\}
\end{eqnarray*}
The good region is $G=G_t^{\alpha}$.
\begin{itemize}
\item
The condition \ref{item.condicion1} follows from the construction of the Markov partition in Lemma \ref{le:markov}.
\item
The condition \ref{item.condicion2} is obvious, we can take $X$ to be a constant vector in the good cone $C_g$.
\item
The condition \ref{item.condicion3} follows from Proposition \ref{prop:expansion}. The lower bound in \ref{eq:strong} is $C_L\lambda_{ws}^nt^{1-\alpha}$, but since we chose $t=\lambda_{ws}^{-n\nu}$ we have in fact the lower bound $\lambda=C_L\lambda_{ws}^{n-n\nu(1-\alpha)}$.
\item
The condition \ref{item.condicion4} follows also directly from Proposition \ref{prop:expansion}.
\item
The condition \ref{item.condicion5} is the most delicate. It follows from our choice of the Lipschitz constant of the vector fields in $\mathcal X$. The condition \hyperlink{condSL}{(SL)} implies that the variation of the vector field $X$ on an atom $P$ of the partition is smaller that the separation between the preimages of the bad cone given by Lemma \ref{le:separation}. This implies that the push forward of the vector field has to go inside the good cone at least on one of the sets $\mathcal G^+(P)$ or $\mathcal G^-(P)$. The choice of the Markov partition from Lemma \ref{le:markov} will give us the required condition.
\end{itemize}

Now we are in condition to apply Theorem \ref{teo.tecnico}. The cocycle is
\[
A=D(L^n\circ f_t)|_{E^s}=L^n\cdot Df_t|_{E^s}.
\]
The lower bound for the expansion in the good region is
\[
\lambda=C_L\lambda_{ws}^nt^{1-\alpha}=C_L\lambda_{ws}^{n-n\nu(1-\alpha)}.
\]
From Proposition \ref{prop:expansion} we see that the norm of the inverse of the cocycle is bounded by
\[
\| A^{-1}\|<C_L\lambda_{ss}^{-n}t=C_L\lambda_{ss}^{-n}\lambda_{ws}^{-n\nu}.
\]
Remember that also
\[
\delta=2\delta_Lt^{-\alpha}=2\delta_L\lambda_{ws}^{n\nu\alpha}.
\]
Since we have an adapted family of vector fields (for $\beta=\frac 13$), we obtain the inequality
\begin{eqnarray*}
\chi^+&>&\frac 1{1+3\delta}\log\frac {\lambda^{1-\delta}}{\| A^{-1}\|^{4\delta}}\\
&\geq&-C_L+\frac 1{1+3\delta}\log\frac{\lambda_{ws}^{[n-n\nu(1-\alpha)](1-\delta)}}{\lambda_{ss}^{-4n\delta}\lambda_{ws}^{-4n\nu\delta}}\\
&=&-C_L+\frac 1{1+3\delta}\log\left(\lambda_{ws}^{n-n\nu(1-\alpha)+n\delta(5\nu-\nu\alpha-1)}\lambda_{ss}^{4n\delta}\right)\\
&=&-C_L+\log\lambda_{ws}^n-\nu(1-\alpha)\log\lambda_{ws}^n+n\delta\left(\frac {-3\log\lambda_{ws}^{1-\nu(1-\alpha)}+\log\lambda_{ws}^{5\nu-\nu\alpha-1}+\log\lambda_{ss}^4}{1+3\delta}\right)\\
&=&-C_L+\log\lambda_{ws}^n-\nu(1-\alpha)\log\lambda_{ws}^n+\frac{n\delta C_{L,\alpha,\nu}}{1+3\delta}.
\end{eqnarray*}

Paring this with the facts $\delta=2\delta_L\lambda_{ws}^{n\nu\alpha}$ and $\lambda_{ws}<1$, we deduce
\[
\lim_{n\rightarrow\infty}\frac{n\delta C_{L,\alpha,\nu}}{1+3\delta}=\lim_{n\rightarrow\infty}\frac{2n\delta_L\lambda_{ws}^{n\nu\alpha} C_{L,\alpha,\nu}}{1+6\delta_L\lambda_{ws}^{n\nu\alpha}}=0,
\]
so the last term is negligible for large $n$. Then for such $n$'s we will obtain that the weak stable Lyapunov exponent of $L^n\circ f_t$ is greater than the weak stable Lyapunov exponent of $L^n$ ($=\log\lambda_{ws}^n$) by at least $-\nu(1-\alpha)\log_{ws}^n-C_L$, which is positive.

This finishes the proof.
\end{proof}

\begin{remark}
Let us comment that the maximal change that we can achieve for the weak stable Lyapunov exponent is close to two thirds of the weak stable exponent of the linear map $L^n$. This is done for $n$ large, $\nu$ close to $\frac 23$ and $\alpha$ close to $0$.

Then, in the context of \hyperlink{theoremA}{Theorem A}, we can increase the top Lyapunov exponent (strong unstable) by at most two thirds of the size of the second Lyapunov exponent (weak unstable).
\end{remark}

\section{Proof of Theorem B}

In this section we will prove \hyperlink{theoremB}{Theorem B}. First we explain how one can addapt the considerations from Section \ref{sec:prelim}, which were made for $\mathbb T^3$, to the four-dimensional torus.

We will take the maps $f_t$ preserving the planes parallel to $E^s_1:=E^{ws}\oplus E^{ms}$. We can assume again that this plane is close to being horizontal, and is generated by the vectors $\overline a_1=(1,0,a_1,a_2)$ and $\overline b_1=(0,1,b_1,b_2)$. Then the map $f_t$ will preserve the lines parallel to $\overline b$, and on each such line is a translation by $t\sin(2\pi x)\overline b$.

The bound on the derivatives $Df_t$ and $DF_t^{-1}$ is still $C_Lt$, while the Lipschitz constant of the projectivization of the derivative restricted to the $E^s_1$ plane also stay the same, $C_Lt^2$. The condition \hyperlink{condPH}{(PH)} stays the same, $\gamma_Lt\frac{\lambda_{ws}^n}{\lambda_u^n}<1$, and it implies that $L^n\circ f_t$ is partially hyperbolic, and it also gives the same bounds on the angle between $E^u_{t,n}$ and $E^u$, and on the densities of the desintegration of the volume along unstable leaves.

If we want that $L^n\circ f_t$ is partially hyperbolic with a splitting into three subbundles (i.e.\@ there is also a strong stable bundle), then we also need to add the {\bf condition (PH')}:\hypertarget{condPHp}
\[
\gamma_L't\frac{\lambda_{ss}^n}{\lambda_{ms}^n}<1,
\]
which will give a strong stable cone invariant be the inverse of $L^n\circ f_t$.

The formula \eqref{eq:strong} stays the same, but the formula \eqref{eq:weak} from Proposition \ref{prop:expansion} changes, $\lambda_{ss}$ is replaced by $\lambda_{ms}$ because it is the stronger contraction in $E^s_1$:

\begin{equation}\label{eq:weak2}
\frac{\|D(L^n\circ f_t)(p)v\|}{\|v\|}> C_L\frac {\lambda_{ms}^n}t.
\end{equation}

There is no change in the separation of the preimages of the bad cone, nor in the construction of the Markov partition.

There is however a change in the Lipschitz constant of the unit vectorfields preserved by the projectivization of $L^n\circ f_t$; $l=l_Lt^2\lambda_{ws}^n<1$ from Lemmma \ref{le:lipschitz} becomes\hypertarget{condLp}{}
\[
l=l_Lt^2\lambda_{ws}^{2n}\lambda_{ss}^n<1,
\]
and this is the new {\bf condition (L')}, which also gives the new Lipschitz constant.

This will also change the condition (SL) to the {\bf condition (SL')}:\hypertarget{condCLp}{}
\[
t^3\lambda_{ws}^{2n}\lambda_{ss}^n<\frac{s_L}{l_Ld_L}.
\]

Now we are ready for the proof of the Theorem.

\begin{proof}[Proof of \hyperlink{theoremB}{Theorem B}]

Assume that we have the conditions \hyperlink{condPH}{(PH)}, \hyperlink{condM}{(M)}, \hyperlink{condLp}{(L')} and \hyperlink{condCLp}{(SL')}:
\begin{eqnarray*}
t\frac{\lambda_{ws}^n}{\lambda_u^n}&<&\min\left\{\frac 1{\gamma_L},\frac{\gamma_M}{\gamma_L}\right\},\\
t^2\lambda_{ws}^{2n}\lambda_{ss}^n&<&\frac 1{l_L}\\
t^3\lambda_{ws}^{2n}\lambda_{ss}^n&<&\frac{s_L}{l_Ld_L}.
\end{eqnarray*}

We also assume that $t$ and $n$ are sufficiently large so all the previous results apply. Again, all the conditions above are satisfied if $t=\lambda_{ws}^{-n(1+\nu)}$, where $n$ is sufficiently large and $\nu>0$ satisfies
\[
\nu<\min\left\{ \frac{\log\lambda_u}{-\log\lambda_{ws}},\frac 13\left(\frac{-\log\lambda_{ss}}{-\log\lambda_{ws}}-1\right)\right\}.
\]
There exists such $\nu$ because of the condition $1>\lambda_{ws}>\lambda_{ss}$.

The condition \hyperlink{condPHp}{(PH')} means that
\[
t\frac{\lambda_{ss}^n}{\lambda_{ms}^n}<\frac 1{\gamma_L'},
\]
and this would give an extra restriction, $\nu<\frac{\log\lambda_{ss}-\log\lambda_{ms}}{\log\lambda_{ws}}-1$, and this can only happen if we have the extra condition on the stable exponents of $L$, $\lambda_{ss}<\lambda_{ws}\lambda_{ms}$.

The Markov partition $\xi$ and the family of Lipschitz unit vectorfields $\mathcal X$ are constructed as before. The partition into good and bad vectorfields is similar, and a similar argument shows that the family of vector fields is adapted to the derivative cocycle restricted to $E^s_1$.

Then we can apply again the Theorem \ref{teo.tecnico} for $A=D(L^n\circ f_t)|_{E^s_1}$, the Markov partition $\xi$, the adapted family $\mathcal X$. In this case we will have
\[
\| A^{-1}\|=C_L\lambda_{ms}^{-n}t=C_L\lambda_{ws}^{-n(1+\nu)}\lambda_{ms}^{-n}
\]
\[
\lambda=C_L\lambda_{ws}^nt^{1-\alpha}=C_L\lambda_{ws}^{n-n(1+\nu)(1-\alpha)},
\]
and
\[
\delta=2\delta_Lt^{-\alpha}=2\delta_L\lambda_{ws}^{n\nu\alpha}.
\]

We do not know if the map $L^n\circ f_t$ is ergodic, however due to Remark \ref{rem.nonergodic} (the Markov partition $\xi$ is clearly generating) we obtain the bound on the Lyapunov exponent at almost every point with respect to the volume:
\begin{eqnarray*}
\chi^+&>&\frac 1{1+3\delta}\log\frac {\lambda^{1-\delta}}{\| A^{-1}\|^{4\delta}}\\
&\geq&-C_L+\frac 1{1+3\delta}\log\frac{\lambda_{ws}^{[n-n(1+\nu)(1-\alpha)](1-\delta)}}{\lambda_{ms}^{-4n\delta}\lambda_{ws}^{-4n(1+\nu)\delta}}\\
&=&-C_L+\frac 1{1+3\delta}\log\lambda_{ws}^{-n(\nu-\alpha-\nu\alpha)+n\delta(5\nu+4-\alpha-\nu\alpha)}\lambda_{ms}^{4n\delta}\\
&=&-C_L-n(\nu-\alpha-\nu\alpha)\log\lambda_{ws}+n\delta\left(\frac {3\log\lambda_{ws}^{\nu-\alpha-\nu\alpha}+\log\lambda_{ws}^{5\nu+4-\alpha-\nu\alpha}+\log\lambda_{ms}^4}{1+3\delta}\right)\\
&=&-C_L-n(\nu-\alpha-\nu\alpha)\log\lambda_{ws}+\frac{n\delta C_{L,\alpha,\nu}}{1+3\delta}.
\end{eqnarray*}

Again for large $n$ the last term is negligible, and if we choose $\nu$ and $\alpha$ such that $\nu-\alpha-\nu\alpha>0$ (for example $\alpha=\frac {\nu}2<\frac 12$) then for large $n$ we will get that the exponent is positive.

Since the strong unstable exponent exponent is unchanged, making another exponent positive will automatically increase the sum of the positive exponents (by Pesin's formula \cite{LyaPesin}), and thus the metric entropy with respect to the volume. This finishes the proof.

\end{proof}

\section{Appendix A: Construction of Adapted families} 
\label{sec:appendix}

In this appendix we will demostrate that if the map is expanding on $\xi$ and has a (non-necessarily invariant) expanding cone for the cocycle, then one can construct an adapted family. The method works for H\" older cocycles, but one needs to assume that the expansion is sufficiently strong in order to compensate the H\" older constant and the bolicity\footnote{Recall that the bolicity of an invertible matrix $A$ is $\norm{A}\cdot\norm{A^{-1}}$.} of the cocycle. We also take a different approach, instead of considering the separation of the preimages of the good cone, we cassume that the iterates of the bad vectors from some specific regions go stricly inside the good cones (with a separation from the boundary of the good cone). It is useful to keep in mind that in our construction of $L^n\circ f_t$ we have $s=2$ and the good regions $G_t^{\alpha+}$ and $G_t^{\alpha-}$ are the regions which cointain the atoms $\xi^{c,1}$ and $\xi^{c,2}$.

The standing hypothesis for this Appendix are the same as in Section \ref{sec.detecting}. We assume that the hypothesis \ref{item.condicion1} is satisfied. There exists the good region $G\subset M$ such that for each $\p_0\in\xi$ we have
   		\[
		\mc\left(\cup\left\{\p\in\xi^1|\p_0:\ \p\subset G\right\}\right)>1- \delta.
   		\]

Besides hypothesis \ref{item.condicion1}, we assume the following.

\begin{enumerate}

\item[\referencia{item.condicion6}{\rm H6}] The map $f$ is expanding in the partition $\xi$, meaning that there exists $\mathtt{d}>1$ such that for every $\p\in \xi, m,m'\in \p$ it holds $d_M(fm,fm')\geq \mathtt{d}\cdot d_M(m,m')$.

\item[\referencia{item.condicion7}{\rm H7}] There exists a continuous sector $\Delta=\{\Delta\}_{m\in M}$ in $\mathbb{P}E$, a finite partition of the complementary sector $\Delta_m^c=\{\mathbb{P}E_m\setminus\Delta_m\}_{m\in M}=\Delta^{c,1},\ldots,\Delta^{c,s}$, a corresponding family of atoms $\xi^c=\xi^{c,1}\cup\cdots\xi^{c,s}\subset \xi^1$  and constants $\lambda,\alpha>0, k>s$ satisfying: 
	\begin{enumerate}
	
		\item If $m\in \p$ then
		\begin{itemize}
		    \item $\PA(m)(\Delta_m)\in \Delta_{fm}$ and furthermore $\disH(\PA(m)(\Delta_m),\partial \Delta_{fm})\geq \alpha$;
	 		\item if $[v]\in \Delta_m$, then $\norm{A(m)\cdot v}\geq \lambda \norm{v}$.
		\end{itemize}

		\item If $\p\in\xi$ then
		\begin{itemize}
			\item for every $\p'\in \xi^{c,i}|\p$ it holds that $m'\in \p'$ implies $\PA(\Delta^{c,i}_{m'})\subset \Delta_{fm'}$, and furthermore $\disH(\PA(\Delta^{c,i}_{m'}), \partial\Delta_{fm'})\geq \alpha$;
	 		\item for every $1\leq i\leq s$,
	 		\[
				\mc\left(\{\p'\in \xi^{c,i}|\p\}\right)\geq \frac{1}{k}.
   			\]
   		\end{itemize}
	
	\smallskip

	\end{enumerate}
	
	\item[\referencia{item.condicion8}{\rm H8}] $\pi:E\to M$ is H\"older, with the H\"older exponent $\theta$. $(f,A)$ is a $\theta$-H\"older cocycle, so $(f,\PA)$ is a $(C_{\PA},\theta)$ projective cocycle for some $C_{\PA}>0$. Let $\mathtt{q}=\frac{b(A)}{\mathtt{d}^{\theta}}, \mathtt{r}:=\sup_{m\in M}\{\mathrm{diam}(\p):\p\in\xi\}$. It holds 
		\begin{itemize}
		\item $\displaystyle{\mathtt{q}<1}$.
		\item $\displaystyle{\frac{C_{\PA}^2\mathtt{r}^{\theta}}{(1-q)\mathtt{d}^{\theta}}<\frac{\alpha}{2}}$.
		\end{itemize}
\end{enumerate}

Consider the constant

\begin{equation}
C_0:=\frac{C_{\PA}}{(1-\mathtt{q})\mathtt{d}^{\theta}}.
\end{equation}

and define

\begin{equation}
\mathcal{X}:=\{X:\p_0\to \mathbb{P}E: X\text{ is } (C_X,\theta)-\text{H\"older with } C_X\leq C_0\}.
\end{equation}

\begin{lemma}\label{lem.adapted}
$\mathcal{X}$ is invariant under $(f,A)$. 
\end{lemma}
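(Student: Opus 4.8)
The plan is a fiber-contraction (graph-transform) argument. One step of ``push one inverse branch of $f$ forward under $\PA$'' turns a $(C,\theta)$-H\"older section of $\mathbb P E$ into a section that is $\big(\tfrac{C_{\PA}}{\mathtt d^{\theta}}+\mathtt q\,C,\,\theta\big)$-H\"older, and $C_0$ is precisely the fixed point of the affine map $C\mapsto\tfrac{C_{\PA}}{\mathtt d^{\theta}}+\mathtt q\,C$, which is a contraction because $\mathtt q<1$ by \ref{item.condicion8}; hence $[0,C_0]$ is forward invariant and $\mathcal X$ is preserved. It remains to carry out this one-step estimate and to reduce the general $k$ to $k=1$.

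\emph{Reduction to $k=1$.} Since $\xi$ is increasing, for $\p\in\xi^{k}|\p_0$ the set $f^{k}(\p)$ is a single atom of $\xi$, so $Y^{k}_{\p}$ is again a section of the kind appearing in the definition of $\mathcal X$. Using the cocycle identity $\PA^{(k+1)}(x)=\PA^{(k)}(fx)\circ\PA(x)$ together with $g^{k+1}_{\p}=g_{\p}\circ g^{k}_{f(\p)}$, one checks that $Y^{k+1}_{\p}$ is exactly the $k$-fold push-forward (over $f(\p)$, viewed as an atom of $\xi^{k}$) of the one-step push-forward $Y^{1}_{\p''}$ of $X$, where $\p''\in\xi^{1}$ is the atom containing $\p$. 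Therefore, once we know the one-step push-forward maps $\mathcal X$ into itself, an induction on $k$ gives $Y^{k}_{\p}\in\mathcal X$ for all $k\ge 1$, which is the assertion of the lemma.

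\emph{The one-step estimate.} Let $X:\p_0\to\mathbb P E\in\mathcal X$, so $X$ is $(C_X,\theta)$-H\"older with $C_X\le C_0$; fix $\p\in\xi^{1}|\p_0$ and let $g=g_{\p}:f(\p)\to\p$ be the corresponding inverse branch. For $m,m'\in f(\p)$ put $\tilde m=g(m)$ and $\tilde m'=g(m')$, which lie in $\p\subset\p_0$; since $f\tilde m=m$ and $f\tilde m'=m'$ belong to a common atom of $\xi$, \ref{item.condicion6} gives $d_M(\tilde m,\tilde m')\le\mathtt d^{-1}d_M(m,m')$. I would then bound $\disang\!\big(Y^{1}_{\p}(m),Y^{1}_{\p}(m')\big)=\disang\!\big(\PA(\tilde m)X(\tilde m),\PA(\tilde m')X(\tilde m')\big)$ by inserting the intermediate point $\PA(\tilde m')X(\tilde m)$: the first piece $\disang\big(\PA(\tilde m)X(\tilde m),\PA(\tilde m')X(\tilde m)\big)$ is the variation of $\PA$ in the base point and is $\le C_{\PA}\,d_M(\tilde m,\tilde m')^{\theta}$ because $(f,\PA)$ is $(C_{\PA},\theta)$-H\"older (\ref{item.condicion8}); the second piece $\disang\big(\PA(\tilde m')X(\tilde m),\PA(\tilde m')X(\tilde m')\big)$ is the image under $\PA(\tilde m')$ of the variation of $X$, hence $\le b(A)\cdot\disang\big(X(\tilde m),X(\tilde m')\big)\le b(A)\,C_X\,d_M(\tilde m,\tilde m')^{\theta}$, using that $\PA(\tilde m')$ has Lipschitz constant $\norm{A(\tilde m')}\cdot\norm{A(f\tilde m')^{-1}}\le b(A)$. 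Combining and using $d_M(\tilde m,\tilde m')\le\mathtt d^{-1}d_M(m,m')$,
\[
\disang\!\big(Y^{1}_{\p}(m),Y^{1}_{\p}(m')\big)\ \le\ \Big(\tfrac{C_{\PA}}{\mathtt d^{\theta}}+\tfrac{b(A)}{\mathtt d^{\theta}}\,C_X\Big)\,d_M(m,m')^{\theta}\ =\ \Big(\tfrac{C_{\PA}}{\mathtt d^{\theta}}+\mathtt q\,C_X\Big)\,d_M(m,m')^{\theta},
\]
and since $C_X\le C_0=\tfrac{C_{\PA}}{(1-\mathtt q)\mathtt d^{\theta}}$ the new constant obeys $\tfrac{C_{\PA}}{\mathtt d^{\theta}}+\mathtt q\,C_X\le\tfrac{C_{\PA}}{\mathtt d^{\theta}}\big(1+\tfrac{\mathtt q}{1-\mathtt q}\big)=C_0$, i.e. $Y^{1}_{\p}\in\mathcal X$.

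The one point requiring genuine care is the bookkeeping forced by $E$ being merely $\theta$-H\"older rather than a constant bundle: both $\disang(X(\tilde m),X(\tilde m'))$ and $\disang(\PA(\tilde m)X(\tilde m),\PA(\tilde m')X(\tilde m))$ compare vectors lying in different fibers, so one must fix once and for all the convention for these distances (orthogonal projections in the ambient $\mathbb R^N$, as in Section \ref{sec.preeliminaries}) and verify that the triangle inequality still closes with the constants above — which is exactly what the notion of a $(C_{\PA},\theta)$ projective cocycle is designed to encode. Note that the second bullet of \ref{item.condicion8} plays no role here; it is what will later absorb these identification errors in the cone statements \ref{item.condicion7}.
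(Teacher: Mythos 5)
Your proof is correct and follows essentially the same argument as the paper: a one-step H\"older estimate via the triangle inequality through an intermediate point, combined with the contraction factor $\mathtt d^{-\theta}$ coming from \ref{item.condicion6}, showing the affine map $C\mapsto C_{\PA}/\mathtt d^{\theta}+\mathtt q\,C$ preserves $[0,C_0]$, followed by induction on $k$. The only cosmetic difference is your choice of intermediate point $\PA(\tilde m')X(\tilde m)$ versus the paper's $\PA(m)\cdot X(m')$ — a symmetric choice yielding the same bound — and you spell out the ``direct induction'' step that the paper leaves implicit.
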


\begin{proof}
Take $X:\p_0\to E$ and consider $Y^1_{\p}$. Note that $\forall m,m'\in \p_0$,
\begin{align*}
&\disang(\PA(m)\cdot X(m),\PA(m)\cdot X(m'))\leq b(A) \disang(X(m),X(m'))\leq b(A) C_X\cdot d_M(m,m')^{\theta}\\
&\disang(\PA(m)\cdot X(m'),\PA(m')\cdot X(m'))\leq C_{\PA}d_M(m,m')^{\theta}
\end{align*}
hence 
\begin{equation*}
\disang(\PA(m)\cdot X(m),\PA(m')\cdot X(m'))\leq (b(A)C_X+C_{\PA})d_M(m,m')^{\theta}.
\end{equation*}
This in turn implies
\begin{align*}
	\disang(Y^1_{\p}(m),Y^1_{\p}(m'))&\leq (b(A)C_X+C_{\PA})d_M(g_{\p}m,g_{\p}m')^{\theta}\leq \frac{b(A)C_X+C_{\PA}}{\mathtt{d}^{\theta}}d_M(m,m')^{\theta}\\
	&\leq C_0 d_M(m,m')^{\theta}
\end{align*}
by \ref{item.condicion8}. By a direct induction argument it follows that $\mathcal{X}$ is invariant.
\end{proof}

Now define
\begin{equation}
\mathcal{X}^g:=\{X:\p_0\to \mathbb{P}E\in\mathcal{X}: \forall m\in\p_0, X(m)\in\Delta_m\},\quad \mathcal{X}^b=\mathcal{X}\setminus\mathcal{X}^g.
\end{equation}

It is simple to check that with this decomposition $\mathcal{X}$ satisfies conditions \ref{item.condicion1}, \ref{item.condicion2} and \ref{item.condicion3}. The remaining ones are considered in the following lemma.

\begin{lemma}\label{lem.transiciones}	
Consider $X:\p_0\to E\in\mathcal X$ and $\p\in \xi^1|\p_0$.
\begin{enumerate}
\item If $X\in\mathcal{X}^g$ and $\p\in \xi^1|_{\p_0}, \p\in G$ then $Y^1_{\p}\in\mathcal{X}^g$.
\item If $X\in\mathcal{X}^b$ there exists $\p_0^r(X)\subset \xi^1|\p_0$ satisfying:
	\begin{itemize}
	\item $\mc\Big(\bigcup_{\p\in\p_0^r(X)}\p\Big)\geq \frac 1k$.
	\item For every $\p\in \p_0^r(X)$, $Y^1_{\p}\in\mathcal X^g$.
	\end{itemize}
\end{enumerate}	
\end{lemma}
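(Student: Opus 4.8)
The plan is to establish both parts by controlling, along a single atom $\p \in \xi^1|\p_0$, how far the image section $Y^1_\p$ can wander from where the cocycle sends the fiber sector $\Delta$. The unifying quantitative tool is the H\"older estimate already proved in Lemma \ref{lem.adapted}: for $X\in\mathcal{X}$ and $m,m'\in\p_0$ we have $\disang(\PA(m)\cdot X(m),\PA(m')\cdot X(m'))\le (b(A)C_X+C_{\PA})\,d_M(m,m')^\theta$, and after applying the inverse branch $g_\p$ (which contracts distances by $\mathtt{d}^{-\theta}$ by \ref{item.condicion6}) the same bound for $Y^1_\p$ on $f(\p)$ comes with constant $C_0$. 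Since $\mathrm{diam}(f(\p))\le \mathtt{r}$, the \emph{total} oscillation of $Y^1_\p$ over its domain is at most $C_0\mathtt{r}^\theta = \dfrac{C_{\PA}\mathtt{r}^\theta}{(1-\mathtt{q})\mathtt{d}^\theta}$, and multiplying and dividing by $C_{\PA}$ and comparing with \ref{item.condicion8} shows this is $<\alpha/2$ (indeed $<\alpha$, using $C_{\PA}\mathtt{r}^\theta\le C_{\PA}^2\mathtt{r}^\theta/C_{\PA}$ together with the stated bound; I would keep a clean factor-of-two margin to absorb the two comparison points). This ``small oscillation'' fact is what makes the $\alpha$-gaps in \ref{item.condicion7} robust under pushing a whole section forward rather than a single vector.

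For part (1): suppose $X\in\mathcal{X}^g$ and $\p\in\xi^1|\p_0$ with $\p\subset G$. Fix $m'\in f(\p)$ and let $m=g_\p(m')\in\p$; then $X(m)\in\Delta_m$, and since $m\in\p\subset G$, condition \ref{item.condicion7}(a) gives $\PA(m)\cdot X(m)=Y^1_\p(m')\in\Delta_{m'}$ with $\disH$-distance at least $\alpha$ from $\partial\Delta_{m'}$ — in particular $Y^1_\p(m')\in\Delta_{m'}$ for that one point. To upgrade this to \emph{all} points of $f(\p)$: for arbitrary $m''\in f(\p)$, $\disang\big(Y^1_\p(m''),Y^1_\p(m')\big)\le C_0\mathtt{r}^\theta<\alpha$, and the fiber sectors $\Delta_{m''},\Delta_{m'}$ differ by at most $\mathrm{var}(\Delta)$ under orthogonal identification, which we may assume is $<\alpha/2$ as well (the standing assumption that bundles and sectors have small variation; if this needs to be stated explicitly I would add it to \ref{item.condicion7}). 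Hence $Y^1_\p(m'')$ still lies in $\Delta_{m''}$, so $Y^1_\p\in\mathcal{X}^g$. Combined with $Y^1_\p\in\mathcal{X}$ from Lemma \ref{lem.adapted}, this is exactly \ref{item.condicion4}.

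For part (2): let $X\in\mathcal{X}^b$, so there is a point $p_\ast\in\p_0$ with $X(p_\ast)\notin\Delta_{p_\ast}$, i.e.\ $X(p_\ast)\in\Delta^{c,i_0}_{p_\ast}$ for some index $i_0$. The key observation is that the oscillation bound forces $X(m)\in\Delta^{c,i_0}_m$ (or at worst within $\alpha/2$ of it) for \emph{every} $m\in\p_0$ — here I would be slightly careful, since a priori $X$ could straddle several complementary pieces $\Delta^{c,i}$; but the relevant hypothesis \ref{item.condicion7}(b) asserts that for each fixed $i$ and each $\p'\in\xi^{c,i}|\p_0$, on $\p'$ the cocycle sends the \emph{whole} complementary sector $\Delta^{c,i}_{m'}$ strictly inside $\Delta_{fm'}$ with gap $\ge\alpha$, and that $\mc(\bigcup\{\p'\in\xi^{c,i}|\p_0\})\ge 1/k$. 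Take $\p_0^r(X):=\bigcup_i\xi^{c,i}|\p_0$ over those indices $i$ for which $X$ meets $\Delta^{c,i}$ somewhere; since the total oscillation is $<\alpha/2<\pi$ and there are only finitely many pieces, $X(\p_0)$ meets at least one such $\Delta^{c,i}$, so $\p_0^r(X)\ne\emptyset$ and $\mc\big(\bigcup_{\p\in\p_0^r(X)}\p\big)\ge 1/k$. For any $\p\in\p_0^r(X)\subset\xi^{c,i}|\p_0$ and any $m'\in f(\p)$ with $m=g_\p(m')$, we have $X(m)\in\Delta^{c,i}_m$, whence $Y^1_\p(m')=\PA(m)\cdot X(m)\in\PA(m)(\Delta^{c,i}_m)\subset\Delta_{m'}$ with gap $\ge\alpha$ from $\partial\Delta_{m'}$; propagating across $f(\p)$ by the oscillation estimate exactly as in part (1) gives $Y^1_\p(m')\in\Delta_{m'}$ for all $m'$, i.e.\ $Y^1_\p\in\mathcal{X}^g$. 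This yields \ref{item.condicion5} with $\beta=1/k$.

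The main obstacle I anticipate is the bookkeeping in part (2) — namely verifying that a \emph{bad} section, which by definition only fails to be in $\Delta$ at one point, actually stays inside a single complementary piece $\Delta^{c,i}$ (or close enough to it) over its whole domain, so that hypothesis \ref{item.condicion7}(b) can be applied uniformly on the selected sub-atoms. This is where the precise relationship between $\mathrm{var}(\Delta)$, the H\"older oscillation $C_0\mathtt{r}^\theta$, and the gap $\alpha$ has to be pinned down; the inequality $\dfrac{C_{\PA}^2\mathtt{r}^\theta}{(1-\mathtt{q})\mathtt{d}^\theta}<\dfrac{\alpha}{2}$ in \ref{item.condicion8} is tailored precisely to give the needed slack, and I would make the factor-of-two accounting explicit there rather than leave it implicit.
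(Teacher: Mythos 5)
Your argument for part (1) is needlessly involved: once you fix $m'\in f(\p)$ and set $m=g_\p(m')\in\p\subset G$, hypothesis \ref{item.condicion7}(a) gives $Y^1_\p(m')=\PA(m)\cdot X(m)\in\Delta_{m'}$ directly, and since this works for \emph{every} $m'\in f(\p)$ there is nothing to ``upgrade''---the oscillation estimate and the appeal to $\mathrm{var}(\Delta)$ are not needed in part (1). The paper's proof is a single sentence.

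For part (2) there is a genuine gap, which you yourself flag as ``the main obstacle.'' You want to conclude $Y^1_\p(m')\in\Delta_{m'}$ by first asserting $X(m)\in\Delta^{c,i}_m$ for every $m\in\p$, but this does not follow: we only know $X(m_0)\in\Delta^{c,i}_{m_0}$ at a \emph{single} point $m_0\in\p_0$, and for other $m$ the section $X$ may well have drifted into $\Delta_m$ or into a different complementary piece. Your fallback, that $X(m)$ is ``within $\alpha/2$ of $\Delta^{c,i}_m$,'' is also not enough: applying $\PA(m)$ to a vector merely $\alpha/2$-close to the sector costs a factor of the bolicity $\norm{A(m)}\norm{A(m)^{-1}}$, and then nothing controls how far $\PA(m)\cdot X(m)$ ends up from $\partial\Delta_{fm}$. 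Taking the union over all indices $i$ that $X$ meets does not repair this, because for a given sub-atom $\p\in\xi^{c,i}|\p_0$ the point $g_\p(m')$ need not be one of the points where $X$ lies in $\Delta^{c,i}$. The paper's resolution is the key idea you are missing: do not apply the cocycle to the varying vector $X(m)$; apply it to the \emph{fixed} vector $X(m_0)$, transported to $E_m$ by orthogonal projection. Since $X(m_0)\in\Delta^{c,i}$, hypothesis \ref{item.condicion7}(b) gives $\PA(m)\cdot X(m_0)\in\Delta_{fm}$ with a gap $\geq\alpha$ from $\partial\Delta_{fm}$, \emph{uniformly} over $m\in\p$ for every $\p\in\xi^{c,i}|\p_0$. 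Only then does one compare $\PA(m)\cdot X(m)$ with $\PA(m)\cdot X(m_0)$: their distance is at most $C_{\PA}\cdot C_0\mathtt{r}^\theta$, which \ref{item.condicion8} makes $<\alpha/2$, so $\PA(m)\cdot X(m)$ is still inside $\Delta_{fm}$. The order of operations---first push the fixed vector deep into $\Delta$, then pay the oscillation cost---is what makes the $\alpha$ margin survive the bolicity. Set $\p_0^r(X)=\xi^{c,i}|\p_0$ for the single index $i$ chosen at $m_0$; the measure bound $\geq 1/k$ is then immediate from \ref{item.condicion7}(b).
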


\begin{proof}
The first part is direct consequence from \ref{item.condicion7}, $(a)$ . As for the second, by our assumptions there exists $m_0\in \p_0$ such that $X(m_0)\in \Delta^{c,i}_{m_0}$, for some $1\leq i\leq s$. Consider the atoms $\p\in \xi^{i,c}|\p_0$, and observe that by the requirements assumed in \ref{item.condicion7} $(b)$ one gets that $m\in \p$ implies $\PA(m)\cdot X(m_0)\in \Delta_{fm}$ and 
\[
	\disH(\PA(m)\cdot X(m_0) ,\partial \Delta_{fm})\geq \alpha.
\]
On the other hand,
\[
	\disH(\PA(m)\cdot X(m_0),\PA(m)\cdot X(m))\leq C_{\PA}\cdot C_0\mathrm{r}^{\theta}
\]
which in turn implies
\[
	\disH(\PA(m)\cdot X(m) ,\partial \Delta_{fm})\geq \alpha-C_{\PA}\cdot C_0\mathrm{r}^{\theta}\geq\frac{\alpha}{2}
\]
by \ref{item.condicion8}. We conclude that for $m'\in f(\p)$ we have $Y^1(m')\in \Delta(m')$; this implies the second statement of the Lemma.
\end{proof}

From Theorem \ref{teo.tecnico} we deduce: 

\begin{corollary}
If \ref{item.condicion6}-\ref{item.condicion8} hold then $\mathcal{X}$ is an adapted family for the cocycle $(f,A)$, and thus the top Lyapunov exponent satisfies
$$
\chi^+>\frac 1{1+k\delta}\log\frac{\lambda^{1-\delta}}{\| A^{-1}\|^{(k+1)\delta}}.
$$
\end{corollary}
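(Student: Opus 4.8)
The final statement to prove is the Corollary asserting that under hypotheses H6--H8 (together with H1), the family $\mathcal{X}$ defined in the Appendix is adapted to the cocycle $(f,A)$, with the explicit bound on $\chi^+$.

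The plan is to verify the five conditions of Definition \ref{def:adapted} with the choices made in the Appendix, namely $\mathcal{X}=\mathcal{X}^g\cup\mathcal{X}^b$ as above, $G$ the good region from H1, $\delta$ from H1, $\beta=\frac{1}{k}$, and $\lambda$ the expansion constant from H7; then invoke Theorem \ref{teo.tecnico}. First, I would note that Lemma \ref{lem.adapted} gives that $\mathcal{X}$ is a well-defined invariant family (this is where H8, specifically $\mathtt{q}<1$, is used, so that the H\"older constant does not blow up under push-forward). Next, the easy matching: \ref{item.condicion1} is exactly H1 (it is a statement about the partition alone); \ref{item.condicion2} holds because a constant section lying in the cone $\Delta$ is trivially $(0,\theta)$-H\"older hence in $\mathcal{X}^g$; and \ref{item.condicion3} follows from the second bullet of H7(a), since $X\in\mathcal{X}^g$ and $m\in G$ (indeed any $m$) gives $X(m)\in\Delta_m$, so $\norm{A(m)X(m)}\geq\lambda$. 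These three I would dispatch in a sentence each, pointing to the remark already in the text that "it is simple to check."

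The substance is \ref{item.condicion4} and \ref{item.condicion5}, and these are precisely Lemma \ref{lem.transiciones}, parts (1) and (2) respectively. Part (1) is immediate from H7(a): the cone $\Delta$ is forward-invariant, so a good section stays good after one push-forward (no need even for $\p\subset G$ here, but that is the hypothesis of \ref{item.condicion4}). For part (2), which gives \ref{item.condicion5} with $\beta=\frac1k$, the argument is: given $X\in\mathcal{X}^b$ there is a point $m_0\in\p_0$ with $X(m_0)\in\Delta^{c,i}_{m_0}$ for some $i$; by H7(b) the atoms $\p\in\xi^{c,i}|\p_0$ have total conditional measure $\geq\frac1k$ and on each of them $\PA(m)\cdot X(m_0)$ lands in $\Delta_{fm}$ with a definite margin $\alpha$ from the boundary; since $X$ is $C_0$-H\"older and the atoms have diameter $\leq\mathtt{r}$, the image $\PA(m)\cdot X(m)$ is within $C_{\PA}C_0\mathtt{r}^{\theta}$ of $\PA(m)\cdot X(m_0)$, which by the second bullet of H8 (the inequality $\frac{C_{\PA}^2\mathtt{r}^{\theta}}{(1-\mathtt{q})\mathtt{d}^{\theta}}<\frac\alpha2$, noting $C_{\PA}C_0=\frac{C_{\PA}^2}{(1-\mathtt{q})\mathtt{d}^{\theta}}$) is less than $\frac\alpha2$; hence $Y^1_{\p}(m')\in\Delta_{m'}$ for all $m'\in f(\p)$, i.e.\ $Y^1_{\p}\in\mathcal{X}^g$. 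Summing the measures of these atoms gives \ref{item.condicion5}.

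With all five conditions verified for the stated constants, Theorem \ref{teo.tecnico} applies verbatim: substituting $\beta=\frac1k$ into $\chi^+>\frac{\beta}{\beta+\delta}\log\frac{\lambda^{1-\delta}}{\norm{A^{-1}}^{\delta+\delta/\beta}}$ gives $\frac{\beta}{\beta+\delta}=\frac{1}{1+k\delta}$ and $\delta+\frac{\delta}{\beta}=\delta+k\delta=(k+1)\delta$, yielding exactly $\chi^+>\frac{1}{1+k\delta}\log\frac{\lambda^{1-\delta}}{\norm{A^{-1}}^{(k+1)\delta}}$, which is the claimed inequality. I do not expect a genuine obstacle here: the Corollary is essentially a bookkeeping assembly of Lemmas \ref{lem.adapted} and \ref{lem.transiciones} plus Theorem \ref{teo.tecnico}. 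The only point requiring care is the chain of H\"older-constant estimates making sure $C_0$ is indeed a fixed point (not merely an upper bound that degrades), and tracking that the margin $\alpha/2$ in Lemma \ref{lem.transiciones} is consumed by exactly the quantity controlled in H8 --- i.e.\ confirming the algebraic identity $C_{\PA}\cdot C_0 = \frac{C_{\PA}^2}{(1-\mathtt{q})\mathtt{d}^{\theta}}$ so that H8's second bullet is the right hypothesis. That is the one place a reader should slow down, and I would spell it out explicitly rather than leave it implicit.
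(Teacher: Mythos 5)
Your proof is correct and matches the paper's approach exactly: Lemma \ref{lem.adapted} for invariance, a one-line check of \ref{item.condicion1}--\ref{item.condicion3}, Lemma \ref{lem.transiciones} for \ref{item.condicion4}--\ref{item.condicion5} (with the same $\alpha/2$ margin argument via the identity $C_{\mathbb{P}(A)}C_0=C_{\mathbb{P}(A)}^2/((1-\mathtt{q})\mathtt{d}^{\theta})$), and then substitution of $\beta=1/k$ into Theorem \ref{teo.tecnico}. The only minor remark is that your statement that ``\ref{item.condicion3} follows since any $m$'' works is accurate, since H7(a) gives the expansion estimate globally on $\Delta$, not just on $G$; this is consistent with the paper, which simply declares the first three conditions ``simple to check.''
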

\section{Appendix B: Robustness of the examples} 
\label{sec:appendixb}

The constructions are robust in the $\mathcal{C}^2$ topology, meaning that for a small $\mathcal{C}^2$ perturbation of $L^n\circ f_t$ one will still have similar bounds for the Lyapunov exponents. This allows for example to realize more Lyapunov exponents for the Anosov maps homotopic to the Anosov automorphism, because generic small perturbations will decrease the unstable exponent and will increase the sum of the two stable exponents. Observe that the construction is not robust in the $\mathcal{C}^1$ topology due to the results of Ma\~n\'e-Bochi-Viana (cf.\@ \cite{Bochi2005}).

We will see that this robustness works even if we go out of the volume preserving world. In change we have to consider the Lyapunov exponents with respect to SRB measures.

\begin{proposition}\label{prop.robust}
Suppose that $L^n\circ f_t$ is the map constructed in the proof of \hyperlink{theoremA}{Theorem A}. Then there exists a $\mathcal{C}^2$ neighborhood $\mathcal U$ of $L^n\circ f_t$ such that for every $f\in\mathcal U$, the bottom Lyapunov exponent of $f$ with respect to the SRB measure of $f$ will verify the same bounds as the bottom Lyapunov exponent of $L^n\circ f_t$ with respect to the volume.
\end{proposition}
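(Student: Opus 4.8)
The plan is to re-run the proof of \hyperlink{theoremA}{Theorem A} for $f$, with $D(L^n\circ f_t)$ replaced by $Df$, the constant stable plane $E^s$ replaced by the (non-constant) stable bundle $E^s_f$ of $f$, and the volume $\mu$ replaced by the SRB measure $\mu_f$ of $f$, and to check that every constant entering the estimates depends continuously on $f$, so that the strict final inequality survives. First, since being Anosov with a one-dimensional unstable bundle is $\mathcal{C}^1$-open, after shrinking $\mathcal{U}$ we may assume every $f\in\mathcal{U}$ is Anosov with $\dim E^u_f=1$, $\dim E^s_f=2$, and that $E^u_f,E^s_f$ are $\theta$-H\"older and $\mathcal{C}^0$-close to $E^u,E^s$ with comparable H\"older constants; in particular $E^s_f$ has small total variation. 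Each such $f$ carries a unique (ergodic) SRB measure $\mu_f$, whose conditional measures $\mu_f^{\xi_f}$ along unstable leaves are absolutely continuous with H\"older densities bounded above and below by a constant $\kappa_f$ that stays uniformly bounded on $\mathcal{U}$; this bounded-distortion estimate along $W^u_f$ replaces \eqref{eq:density}.

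Next, transporting $E^s_f$ onto $E^s$ by the orthogonal projection (which is $\mathcal{C}^0$-close to the identity), the cocycle $Df|_{E^s_f}$ is $\mathcal{C}^0$-close to $D(L^n\circ f_t)|_{E^s}$. Therefore the good and bad cones $C_g,C_g^c$ of Section \ref{sec:prelim}, transported fiberwise into $\mathbb{P}E^s_f$, still obey the expansion dichotomy of Proposition \ref{prop:expansion} (strong expansion of vectors in $C_g$ over the good region $G_t^{\alpha}$, uniform lower bound $\gtrsim\lambda_{ss}^n/t$ for the inverse), with constants degraded by a factor as close to $1$ as desired; the separation $d_{\mathbb P}(C^+,C^-)>s_L t^{-1}$ of the preimages of the bad cone (Lemma \ref{le:separation}) persists with a slightly smaller $s_L$; and the invariance of the $l$-Lipschitz unit vector fields with $l=l_L t^2\lambda_{ws}^{2n}$ (Lemma \ref{le:lipschitz}) persists with a slightly larger $l_L$. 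On the combinatorial side, $f$ is conjugate to $L^n\circ f_t$ by a homeomorphism $\mathcal{C}^0$-close to the identity, so pulling back the Markov partition and intersecting with $W^u_f$ yields a generating partition $\xi_f$ subordinate to $W^u_f$ with atoms of uniformly bounded length; the proportion estimates of Lemma \ref{le:markov} for the regions $G_t^{\alpha+},G_t^{\alpha-}$ and $B_t^{\alpha}$ depend only on the unstable foliation being uniformly transverse to the vertical tori by segments of bounded length, hence hold for $\xi_f$ as well (after at most one extra forward iterate of the partition), now weighted by $\mu_f^{\xi_f}$ using $\kappa_f$. Thus, exactly as in the proof of \hyperlink{theoremA}{Theorem A} --- equivalently, by verifying \ref{item.condicion6}--\ref{item.condicion8} of Appendix A with $\mathtt{d}$ close to $\lambda_u^n$ and $b(A)$ close to its original value, so that $\mathtt{q}<1$ and the remaining H\"older inequality persist --- the family $\mathcal{X}_f$ of $l$-Lipschitz unit vector fields in $\mathbb{P}E^s_f$ over atoms of $\xi_f$ is adapted (Definition \ref{def:adapted}) to $Df|_{E^s_f}$, with $\beta=\tfrac13$ and with $\delta$ and $\lambda$ close to their Theorem A values. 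Theorem \ref{teo.tecnico} then yields the same lower bound for the top Lyapunov exponent $\chi^+$ of $Df|_{E^s_f}$ with respect to $\mu_f$, namely $\chi^+>\log\lambda_{ws}^n+c$ with the same positive margin $c$ obtained in the proof of \hyperlink{theoremA}{Theorem A} (which is preserved when $\mathcal{U}$ is shrunk).

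It remains to convert this into a bound on the \emph{bottom} exponent of $f$. By Oseledets' theorem the two exponents $\chi^+\ge\chi^-$ of $Df|_{E^s_f}$ satisfy $\chi^++\chi^-=\int_{\mathbb T^3}\log|\det Df|_{E^s_f}|\,d\mu_f$; since $f_t$ preserves area on the stable leaves, $\det D(L^n\circ f_t)|_{E^s}\equiv\lambda_{ws}^n\lambda_{ss}^n=\lambda_u^{-n}$ is constant, so $\det Df|_{E^s_f}$ is $\mathcal{C}^0$-close to the constant $\lambda_u^{-n}$ and $\int\log|\det Df|_{E^s_f}|\,d\mu_f$ is within $\varepsilon(\mathcal{U})$ of $-n\log\lambda_u$. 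Hence
\[
\chi^-=\int\log|\det Df|_{E^s_f}|\,d\mu_f-\chi^+<-n\log\lambda_u-n\log\lambda_{ws}-c+\varepsilon(\mathcal{U})=n\log\lambda_{ss}-c+\varepsilon(\mathcal{U}),
\]
which, for $\mathcal{U}$ small enough, lies strictly below $n\log\lambda_{ss}$ with essentially the same quantitative gap as for $L^n\circ f_t$; this $\chi^-$ is the bottom Lyapunov exponent of $f$ with respect to $\mu_f$.

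The obstacle here is bookkeeping rather than conceptual: one must track the continuous dependence on $f$ of every constant ($\gamma_L,\delta_L,s_L,l_L,d_L,D_L$, the distortion bound $\kappa_f$, the transversality angle $\theta_u$, and so on) and re-derive each lemma of Section \ref{sec:prelim} for the perturbed data. The one genuinely delicate point, inherited from \hyperlink{theoremA}{Theorem A}, is hypothesis \ref{item.condicion5}: one needs the variation $l_L D_L t^2\lambda_{ws}^{2n}$ of the invariant vector fields to stay below the separation $s_L t^{-1}$ of the preimages of the bad cone (condition (SL)), so that a ``bad'' field is forced into $C_g$ over at least one of $G_t^{\alpha+},G_t^{\alpha-}$. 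Since in Theorem A this is arranged with a whole power of $t$ to spare (taking $t=\lambda_{ws}^{-n\nu}$ with $\nu<2/3$), the inequality is stable under small perturbations of all the constants involved, and no new phenomenon arises.
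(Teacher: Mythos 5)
Your argument follows the paper's own route: re-run the construction from Sections 4--5 for the perturbed map $f$ (Anosov splitting $E^s_f$, SRB conditionals replacing \eqref{eq:density}, transported good/bad cones, the same Markov partition pulled back by the conjugacy, Lipschitz preservation, then Theorem~\ref{teo.tecnico}), checking that every constant varies continuously with $f$ in the $\mathcal{C}^2$ topology. The one genuine addition you make is spelling out the last step, converting the lower bound on $\chi^+$ of $Df|_{E^s_f}$ into an upper bound on the bottom exponent $\chi^-$ via $\chi^++\chi^-=\int\log|\det Df|_{E^s_f}|\,d\mu_f$ (close to $-n\log\lambda_u$ since $f_t$ preserves stable-plane area), which the paper leaves implicit; this is correct and worth making explicit precisely because the perturbed $f$ need not be volume preserving, so the stable determinant is only approximately constant.
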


A similar result could be obtained for the map constructed in the example from the proof of \hyperlink{theoremB}{Theorem B}, however it would work for the strong partially hyperbolic case, and with an additional bunching condition so that the centre bundle is H\H older with exponent close to 1. This could allow to create an ergodic example, if the starting map $L$ is sufficiently center bunched.

\begin{proof}
We will explain how to prove the Proposition \ref{prop.robust}. 

Let $f$ be a $\mathcal{C}^2$ map $\mathcal{C}^2$ close to $L^n\circ f_t$. Then $f$ is Anosov, the stable and unstable manifolds are $\mathcal{C}^2$, and the stable and unstable foliations ($W^s_f$ and $W^u_f$) are $\mathcal{C}^1$. The hyperbolic splitting $TM=E^s_f\oplus E^u_f$ is also $\mathcal{C}^1$ and is close to the hyperbolic splitting for $L^n\circ f_t$. Also $f$ and $L^n\circ f_t$ are conjugated, and the conjugacy is $\mathcal{C}^0$ close to the identity.

The unstable foliation is absolutely continuous, and the disintegrations of the SRB measure of $f$ along the unstable leaves of $W^u_f$ will have densities verifying the same formula:
\[
\frac{\rho_f(x)}{\rho_f(y)}=\lim_{k\rightarrow\infty}\frac{\left|Df^{-k}|_{E^u_{f}(x)}\right|}{\left|Df^{-k}|_{E^u_f(y)}\right|},
\]
for $x$ and $y$ in the same unstable leaf $W^u_f$. This implies that for $f$ $C^2$ close to $L^n\circ f_t$ we have the good bounds on the variation of densities on local unstable manifolds, in other words \ref{eq:density} holds (here is where the $\mathcal{C}^2$ condition is important).

The good regions $G_t^{\alpha+}$ and $G_t^{\alpha-}$ and the bad region $B_t^{\alpha}$ are the same. The new good cone $C_g^f$ is the orthogonal projection of $C_g$ to the new stable space $E^s_f$, and it is $\mathcal{C}^0$ close to $C_g$. We consider the cocycle $A_f=Df|_{E^s_f}$, and this is $\mathcal{C}^1$ and close to the cocycle $A=D(L^n\circ f_t)|_{E^s}$. We can assume that we have the same upper bound on $\|A\|$ and $\|A^{-1}\|$, and the bounds \ref{eq:strong} and \ref{eq:weak} from Proposition \ref{prop:expansion} will hold for $f$ (with the good cone $C_g$ replaced by $C_g^f$).

Since both $C_g^f$ (or $C_b^f=(C_g^f)^c$) and $A_f$ depend continuously of $f$ in the $\mathcal{C}^1$ topology, the same separation of the preimages of the bad cone from Lemma \ref{le:separation} will hold for $f$ sufficiently close to $L^n\circ f_t$.

Let $h_f$ be the conjugacy between $f$ and $L^n\circ f_t$. We will consider the unstable Markov partition $\xi_f=h_f^{-1}(\xi)$, which is in fact $\mathcal{C}^0$ close to $\xi$. Again the continuity of the partition $\xi_f$ with respect to $f$, and the bounds on the densities of the disintegrations of the SRB measure along unstable manifolds will imply that the Lemma \ref{le:markov} will also hold for $f$ (with $\xi$ replaced by $\xi_f$ and the volume replaced by the SRB measure of $f$).

The projectivization $\mathbb PA_f$ is also $\mathcal{C}^1$ and close to $\mathbb PA$. Again we can assume that we have the same bounds for the Lipschitz constant of $\mathbb P(A)$ as $\mathbb P(L^n\circ f_t)$ (here we use that $f$ is $\mathcal{C}^2$ close to $L^n\circ f_t$ and the bundle $E^s_f$ is Lipschitz with Lipschitz constant close to 1). This will give us the preservation of $l$-Lipschitz vector fields along the unstable leaves for the same $l$ as in Lemma \ref{le:lipschitz} for $L^n\circ f_t$.

Then we can show again that the family of $l$-Lipschitz vector fields on the atoms of $\xi_f$ is addapted for the same $\delta$, $\lambda$ and the bound on $\|A_f^{-1}\|$, and the conclusion follows.

\end{proof}

\section{Appendix C: Continuity of exponents} 
\label{sec:appendixc}

In this part we will comment on the continuity of the exponents of $L^n\circ f$ with respect to the volume preserving diffeomorphism $f$. 

Let us assume first that $L:\mathbb T^3\rightarrow\mathbb T^3$ has the eigenvalues $\lambda_s<1<\lambda_{wu}<\lambda_{su}$, so $L^n$ has the eigenvalues $\lambda_s^n<1<\lambda_{wu}^n<\lambda_{su}^n$. Then $L^n\circ f$ will have a stable exponent $\chi^s(L^n\circ f)<0$ and two unstable exponents $0<\chi^{wu}(L^n\circ f)\leq\chi^{su}(L^n\circ f)$.

Let us first remark that for $f$ not too far from the identity, the decomposition into three invariant bundles persists, so clearly the Lyapunov exponents are continuous. In fact the stable exponent $\chi^s(L^n\circ f)$ stays always continous. However when $f$ increases, the dominated decomposition inside $E^u$ will break, and the continuity of the exponents $\chi^{wu}(L^n\circ f)$ and $\chi^{su}(L^n\circ f)$ is not obvious anymore.

We want to say something about the continuity of the map $f\mapsto \chi^{wu}(L^n\circ f)$. Going back to the initial question on the realization of Lyapunov exponents, this would imply that with our perturbations we do not realize only some set of isolated values, but we realize in fact a continuous set, and most probably with nonempty interior (with a bit of extra effort).

The continuity will not hold in the $\mathcal{C}^1$ topology because of the results of Ma\~n\'e- Bochi-Viana (above cited), thus we will work in the space of $\mathcal{C}^2$ volume preserving diffeomorphisms equipped with the $\mathcal{C}^2$ topology.

We cannot give a general affirmative answer to the question of the continuity of the exponents, however we can say some things under additional assumptions.

Backes-Brown-Butler obtained the following result.

\begin{proposition} (Corollary 3.4 in \cite{BBB}) 
Let $g : M \rightarrow M$ be a transitive $\mathcal{C}^{1+\alpha}$ Anosov diffeomorphism for some $\alpha > 0$ and $\phi: M \rightarrow\mathbb R$ a H\" older continuous potential. If $\dim E^u = 2$ and $Dg|_{E^u}$ is fiber-bunched then $g$ is a continuity point for the Lyapunov exponents $\lambda_{\pm}(Dg |_{E^{u,g}}, \mu_{\phi})$ as a function of $g\in Diff^{1+\alpha}(M)$ and $\phi\in C^{\beta}(M,\mathbb R)$.
\end{proposition}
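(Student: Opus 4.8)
\medskip
\noindent\textbf{Proof proposal.} The plan is to follow the by-now standard route to continuity of extremal Lyapunov exponents of two–dimensional fiber-bunched cocycles — through invariant holonomies and the Invariance Principle, as in \cite{BBB}. First I would record the preliminaries. Since $g$ is transitive $\mathcal C^{1+\alpha}$ Anosov and $\phi$ is H\"older, $\mu_\phi$ is the unique equilibrium state of $\phi$: it is ergodic (indeed mixing), fully supported, has local product structure, and $(g,\phi)\mapsto \mu_\phi$ is weak$*$ continuous (a $\mathcal C^{1+\alpha}$-small perturbation of $g$ is Anosov and topologically conjugate to $g$, and equilibrium states of H\"older potentials vary continuously). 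Because $A_g:=Dg|_{E^u_g}$ is fiber-bunched and fiber-bunching is an open condition, on a $\mathcal C^{1+\alpha}$-neighborhood $\mathcal U$ of $g$ every $A_{g'}$ admits stable and unstable holonomies $H^s_{g'},H^u_{g'}$, which are $\theta$-H\"older along the respective laminations and depend continuously on $g'$, uniformly on compact sets; I would also fix, for $g'\in\mathcal U$, the conjugacy $h_{g'}$ between $g'$ and $g$ so as to transport all objects to the fixed bundle $E^u_g$.

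\medskip
\noindent Next I would reduce to one-sided semicontinuity. The quantity $\lambda_++\lambda_-=\int\log|\det A_g|\,d\mu_\phi=\int\log\mathrm{Jac}^u g\,d\mu_\phi$ is continuous in $(g,\phi)$, while $\lambda_+(A_g)=\inf_n\tfrac1n\int\log\|A_g^{(n)}\|\,d\mu_\phi$ is an infimum of continuous functions, hence upper semicontinuous; so it suffices to prove $\lambda_+$ is lower semicontinuous at $g$. If $\lambda_+(A_g)=\lambda_-(A_g)$ this is automatic, since $\lambda_+(A_{g_k})\ge\tfrac12(\lambda_++\lambda_-)(A_{g_k})\to\lambda_+(A_g)$ for any $g_k\to g$, $\phi_k\to\phi$. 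So I would assume $\lambda_+(A_g)>\lambda_-(A_g)$, write $E^u_g=\mathbb E^1\oplus\mathbb E^2$ for the (a priori only measurable) Oseledets splitting with $\lambda_+$ on $\mathbb E^1$ and $\lambda_-$ on $\mathbb E^2$, and argue by contradiction from $\lambda_+(A_{g_k})\to\ell<\lambda_+(A_g)$ along some $g_k\to g$, $\phi_k\to\phi$.

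\medskip
\noindent For each $k$ I would take $m_k:=\delta_{\mathbb E^1(A_{g_k})}$, the projective probability carried by the fast Oseledets direction: it is invariant under the projectivization of $A_{g_k}$, it is a $u$-state (the fast Oseledets subbundle depends only on the backward orbit, hence is $H^u$-invariant), and $\int\phi_{A_{g_k}}\,dm_k=\lambda_+(A_{g_k})$, where $\phi_A([v])=\log(\|Av\|/\|v\|)$. Transporting by $h_{g_k}$ to $\mathbb P E^u_g$ and passing to a subsequence, $m_k\to m$ weak$*$, with $m$ invariant under the projectivization of $A_g$, projecting to $\mu_\phi$, and — using $H^u_{g_k}\to H^u_g$ — again a $u$-state for $A_g$. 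By ergodicity of $\mu_\phi$ and $\lambda_+(A_g)>\lambda_-(A_g)$, a theorem of Ledrappier forces $m=a\,\delta_{\mathbb E^1}+b\,\delta_{\mathbb E^2}$ with constants $a,b\ge 0$, $a+b=1$, whence $\ell=\int\phi_{A_g}\,dm=a\lambda_+(A_g)+b\lambda_-(A_g)$ and therefore $b>0$. Since the disintegration of a $u$-state is $H^u_g$-equivariant and $H^u_g\mathbb E^1=\mathbb E^1$, comparing atoms gives $H^u_g\mathbb E^2=\mathbb E^2$. Running the same argument with the $s$-states $m'_k:=\delta_{\mathbb E^2(A_{g_k})}$, for which $\int\phi_{A_{g_k}}\,dm'_k=\lambda_-(A_{g_k})\to\int\log|\det A_g|\,d\mu_\phi-\ell>\lambda_-(A_g)$, forces $\mathbb E^1$ to be $H^s_g$-invariant.

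\medskip
\noindent To finish I would invoke the Invariance Principle. Now $\mathbb E^1$ is always $H^u_g$-invariant and, by the above, also $H^s_g$-invariant, so $\delta_{\mathbb E^1}$ is a bi-invariant $u$-state; by the Invariance Principle \cite{AV2010}, valid for fiber-bunched cocycles over hyperbolic systems carrying equilibrium states, its disintegration is continuous, i.e. $\mathbb E^1$ is a continuous sub-bundle of $E^u_g$. Likewise $\mathbb E^2$ is always $H^s_g$-invariant and, by the above, $H^u_g$-invariant, hence continuous as well. Thus $E^u_g=\mathbb E^1\oplus\mathbb E^2$ is a continuous $Dg$-invariant — therefore dominated — splitting; dominated splittings persist under $\mathcal C^1$-small perturbations, so for $g_k$ near $g$ the cocycle $A_{g_k}$ has the corresponding dominated splitting and $\lambda_+(A_{g_k})=\int\log\|A_{g_k}|_{\mathbb E^1_k}\|\,d\mu_{\phi_k}\to\lambda_+(A_g)$, contradicting $\lambda_+(A_{g_k})\to\ell<\lambda_+(A_g)$. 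This yields lower semicontinuity and hence the Proposition. The hard part will be the limiting argument for $u$- and $s$-states across cocycles carried by the varying bundles $E^u_{g_k}$: one must transport everything to the fixed base via the conjugacies, use that fiber-bunching — and with it the holonomies — is uniform on a neighborhood so that $H^{s/u}_{g_k}\to H^{s/u}_g$, and check that holonomy-equivariance of the disintegrations (an integral identity) survives the weak$*$ limit; the remaining steps amount to bookkeeping plus correct invocations of the Invariance Principle and of the continuity of equilibrium states.
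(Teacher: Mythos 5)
The paper does not prove this proposition; it is quoted verbatim, with attribution, as Corollary~3.4 of Backes--Brown--Butler \cite{BBB}, and used as a black box in Appendix~C. There is therefore no in-paper proof for your attempt to be checked against, and the relevant comparison is with the argument in \cite{BBB} itself (which in turn builds on Bocker--Viana, Avila--Viana, and Viana's monograph).

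That said, your reconstruction follows the by-now standard scheme for continuity of extremal exponents of two-dimensional fiber-bunched cocycles, and the overall architecture is the correct one: upper semicontinuity of $\lambda_+$ from subadditivity, continuity of the sum via the Jacobian, reduction to lower semicontinuity, and a contradiction argument via $u$- and $s$-states, Ledrappier's theorem, and the rigidity part of the Invariance Principle yielding a continuous (hence dominated) splitting that persists under perturbation. Two points deserve a more careful treatment than your sketch gives them. First, when some $A_{g_k}$ has equal exponents there is no fast Oseledets direction, so you cannot take $m_k=\delta_{\mathbb{E}^1(A_{g_k})}$; one should instead take an arbitrary $u$-state $m_k$ for $A_{g_k}$ (these always exist for fiber-bunched cocycles by pushing forward) and use the identity $\int\phi_{A_{g_k}}\,dm_k=\lambda_+(A_{g_k})$, which holds for any $u$-state. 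Second, the version of the Invariance Principle you invoke --- that a probability which is simultaneously an $su$-state over a base measure with local product structure has continuous disintegrations --- must be the rigidity statement for fiber-bunched linear cocycles over hyperbolic maps, not the vanishing-exponent criterion of \cite{AV2010}; this is what \cite{BBB} actually use, and it is worth saying explicitly which statement you are applying and checking its hypotheses (H\"older bundle, local product structure of $\mu_\phi$, uniform fiber-bunching on a neighborhood). Modulo these clarifications, and modulo the bookkeeping you flag about transporting the varying bundles $E^u_{g_k}$ to a fixed reference bundle and verifying convergence of holonomies, the plan is sound and consistent with the cited source.
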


Here $\mu_{\phi}$ is the unique equilibrium state of $g$ for $\phi$, and fiber-bunched means that there exists $n>0$ such that
\[
\|Dg_x^n|_{E^u_x}\|\cdot\|(Dg_x^n|_{E_x^u})^{-1}\|\cdot\max\{\|Dg_x^n|_{E^s_x}\|^{\beta},\|(Dg_x^n|_{E^u_x})^{-1}\|^{\beta}\}<1.
\]

In our case $g$ is the map $L^n\circ f$, $\alpha$ and $\beta$ are equal to 1, and $\phi$ is the logarithm of the unstable Jacobian of $g$, so $\mu_{\phi}$ is the volume for every $g$. Using the fact that our maps are volume preserving we obtain the following result.

\begin{proposition}\label{prop:cont}
If
\[
\|L^nDf_x|_{E^u_x}\|\cdot\|(L^nDf_x|_{E_x^u})^{-1}\|^2<1,
\]
then the map $f\mapsto \chi^{wu}(L^n\circ f)$ is continuous at $f$ in the $C^2$ topology.
\end{proposition}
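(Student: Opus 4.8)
The plan is to read the statement as a direct application of the Backes--Brown--Butler result quoted above (Corollary 3.4 of \cite{BBB}) to the Anosov diffeomorphism $g:=L^n\circ f$. As a $C^2$ (hence $C^{1+\alpha}$) Anosov diffeomorphism homotopic to a hyperbolic automorphism, $g$ is topologically conjugate to it by \cite{Franks1969,Manning1974}, hence transitive; moreover $\dim E^u=2$ (as is implicit in the statement and in the setting of this section, since otherwise $\chi^{wu}$ is not even defined). As already arranged in the paragraph preceding the statement, taking $\phi$ to be the logarithm of the unstable Jacobian of $g$ one has $\mu_\phi=\mathrm{vol}$ for every volume-preserving $g$, so that $\chi^{wu}(g)=\lambda_-(Dg|_{E^u},\mu_\phi)$. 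Hence the only hypothesis of the cited result still to be checked is that $Dg|_{E^u}$ is fiber-bunched (with the relevant H\"older exponent $\beta$; one may take $\beta=1$ in the bound below, which is precisely the form matching the hypothesis of the Proposition). Granted this, the cited result gives that $(h,\psi)\mapsto\lambda_-(Dh|_{E^u},\mu_\psi)$ is continuous at $(g,\phi)$, and the conclusion will follow by feeding in the continuous dependence of the data on $f$.

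The substantive point is the verification of fiber-bunching, and the issue is that the hypothesis only controls the ``unstable distortion'' $\|Dg|_{E^u}\|\cdot\|(Dg|_{E^u})^{-1}\|^2$, whereas the fiber-bunching inequality also involves the stable norm $\|Dg^m|_{E^s}\|$. First, by the hypothesis and compactness, $\kappa:=\sup_x\|Dg_x|_{E^u_x}\|\cdot\|(Dg_x|_{E^u_x})^{-1}\|^2<1$, and sub-multiplicativity of the operator norm, applied to the cocycle $Dg|_{E^u}$ and to its inverse, gives for all $m\geq1$ and all $x$
\[
\|Dg^m_x|_{E^u_x}\|\cdot\|(Dg^m_x|_{E^u_x})^{-1}\|^2\leq\prod_{i=0}^{m-1}\Big(\|Dg_{g^ix}|_{E^u}\|\cdot\|(Dg_{g^ix}|_{E^u})^{-1}\|^2\Big)\leq\kappa^m<1.
\]
Next, since the map is volume-preserving and $|\det L|=1$ one has $|\det Dg^m_x|\equiv1$, and since $\dim E^s=1$ and the splitting $E^s\oplus E^u$ is uniformly transverse, there is $C_0>0$ with
\[
\|Dg^m_x|_{E^s_x}\|\leq C_0\,|\det(Dg^m_x|_{E^u_x})|^{-1}=C_0\,\frac{\|(Dg^m_x|_{E^u_x})^{-1}\|}{\|Dg^m_x|_{E^u_x}\|},
\]
the last equality because $|\det(Dg^m_x|_{E^u_x})|$ is the product of the two singular values of $Dg^m_x|_{E^u_x}$, namely $\|Dg^m_x|_{E^u_x}\|$ and $\|(Dg^m_x|_{E^u_x})^{-1}\|^{-1}$. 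Using also the uniform expansion of $E^u$, $\|(Dg^m_x|_{E^u_x})^{-1}\|\leq C_1\tau^m$ with $0<\tau<1$, we get
\[
\|Dg^m_x|_{E^u_x}\|\cdot\|(Dg^m_x|_{E^u_x})^{-1}\|\cdot\|Dg^m_x|_{E^s_x}\|\leq C_0\,\|(Dg^m_x|_{E^u_x})^{-1}\|^2\leq C_0C_1^2\tau^{2m},
\]
which is $<1$ once $m$ is large. Combined with the first display, this shows that for $m$ large and all $x$,
\[
\|Dg^m_x|_{E^u_x}\|\cdot\|(Dg^m_x|_{E^u_x})^{-1}\|\cdot\max\big\{\|Dg^m_x|_{E^s_x}\|,\ \|(Dg^m_x|_{E^u_x})^{-1}\|\big\}<1,
\]
that is, $Dg|_{E^u}$ is fiber-bunched.

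To finish, observe that for $f'$ in a $C^2$-neighbourhood of $f$ the maps $g'=L^n\circ f'$ remain transitive Anosov with two-dimensional unstable bundle, the bundles $E^u_{g'}$ (hence the unstable Jacobian potentials $\phi'$) depend H\"older-continuously on $f'$ in a fixed H\"older norm, and $\mu_{\phi'}=\mathrm{vol}$ throughout. Applying the Backes--Brown--Butler result at $g=L^n\circ f$ therefore yields
\[
\chi^{wu}(L^n\circ f')=\lambda_-(D(L^n\circ f')|_{E^u},\mathrm{vol})\ \xrightarrow[\,f'\to f\,]{}\ \lambda_-(D(L^n\circ f)|_{E^u},\mathrm{vol})=\chi^{wu}(L^n\circ f),
\]
which is the asserted continuity. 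I expect the only real obstacle to be the fiber-bunching step: rather than trying to estimate $\|Dg^m|_{E^s}\|$ head-on, the efficient route is to exploit volume-preservation ($\det\equiv1$) and the uniform transversality of $E^s$ and $E^u$ to re-express it through the singular values of $Dg^m|_{E^u}$, and then sub-multiplicativity to propagate the one-step hypothesis to all iterates; identifying $\mu_\phi$ with the volume, the H\"older dependence of the unstable Jacobian on the map, and the invocation of the cited result are all routine (the first being already recorded in the text preceding the statement).
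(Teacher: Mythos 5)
Your proposal is correct and follows the same route the paper takes (which gives no explicit proof, only the remark that volume preservation reduces the result to the Backes--Brown--Butler corollary): you identify $\mu_\phi$ with the volume via the geometric potential and then reduce to verifying fiber-bunching. The fiber-bunching verification is the only nontrivial step, and you handle it correctly — submultiplicativity propagates the one-step hypothesis to control the $\|(Dg^m|_{E^u})^{-1}\|$ branch of the $\max$, while $|\det Dg^m|\equiv 1$, uniform transversality of the splitting, and the singular-value factorization $|\det(Dg^m|_{E^u})|=\|Dg^m|_{E^u}\|\cdot\|(Dg^m|_{E^u})^{-1}\|^{-1}$ (valid since $\dim E^u=2$) control the $\|Dg^m|_{E^s}\|$ branch, with the uniform expansion of $E^u$ finishing the estimate for large $m$.
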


Considering the case of the family $f_t$ from Section \ref{sec:prelim} we obtain the following.

\begin{proposition}
Suppose that the map $L$ satisfies the condition $\lambda_{wu}^2>\lambda_{su}$. Then there exists a constant $p_L>0$ such that the map $t\mapsto \chi^{wu}(L^n\circ f_t)$ is continuous for
\[
0\leq t<p_L\left(\frac{\lambda_{wu}^2}{\lambda_{su}}\right)^{\frac n3}.
\]
then 
\end{proposition}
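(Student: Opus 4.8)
The plan is to deduce this directly from Proposition \ref{prop:cont} by verifying its hypothesis for $f=f_t$, for $t$ in the claimed range. Recall from Section \ref{sec:prelim} that $f_t$ preserves the two–dimensional plane $\mathrm{span}(\overline a,\overline b)$, which in the present relabelling is exactly the unstable plane $E^u=E^{wu}\oplus E^{su}$ of $L$; since $L$ also preserves $E^u$, the plane $E^u$ is $L^n\circ f_t$–invariant. Moreover the one–dimensional complementary bundle contracts with rate exactly $\lambda_s^n$: indeed $\det D(L^n\circ f_t)\equiv 1$ while the Jacobian of $L^n\circ f_t$ along $E^u$ is $(\lambda_{wu}\lambda_{su})^n$, so the remaining factor is $(\lambda_{wu}\lambda_{su})^{-n}=\lambda_s^n<1$. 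Together with the lower bound $\|(L^nDf_t(x)|_{E^u})^{-1}\|^{-1}\ge \lambda_{wu}^n/(C_L t)>1$ (valid in our range of $t$), this shows that, after possibly shrinking $p_L$, the map $L^n\circ f_t$ is Anosov with unstable bundle precisely the constant plane $E^u$, so the quantity in Proposition \ref{prop:cont} is just $D(L^n\circ f_t)(x)|_{E^u}=L^nDf_t(x)|_{E^u}$.

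Next I would estimate the two relevant norms uniformly in $x\in\mathbb T^3$. From the explicit form of $Df_t$ (and $Df_t^{-1}=Df_{-t}$) one gets $\|Df_t|_{E^u}\|,\ \|(Df_t|_{E^u})^{-1}\|\le C_L(1+t)$, while the eigenvalue structure of $L$ gives $\|L^n|_{E^u}\|\le C_L\lambda_{su}^n$ and $\|(L^n|_{E^u})^{-1}\|\le C_L\lambda_{wu}^{-n}$ (the constant $C_L$ absorbing the bounded distortion coming from the non–orthogonality of the eigendirections of $L$). Multiplying,
\[
\|L^nDf_t(x)|_{E^u}\|\cdot\|(L^nDf_t(x)|_{E^u})^{-1}\|^2\le C_L(1+t)^3\,\lambda_{su}^n\lambda_{wu}^{-2n}=C_L(1+t)^3\left(\frac{\lambda_{su}}{\lambda_{wu}^2}\right)^n.
\]
This is $<1$ as soon as $(1+t)^3<C_L^{-1}(\lambda_{wu}^2/\lambda_{su})^n$, that is, as soon as $0\le t<p_L(\lambda_{wu}^2/\lambda_{su})^{n/3}$ for a suitable $p_L>0$ depending only on $L$. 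The hypothesis $\lambda_{wu}^2>\lambda_{su}$ is precisely what makes $\lambda_{wu}^2/\lambda_{su}>1$, so that this threshold is positive (and in fact grows with $n$) and the statement is not vacuous. With the hypothesis of Proposition \ref{prop:cont} verified for every such $t$, continuity of $t\mapsto\chi^{wu}(L^n\circ f_t)$ on $[0,\,p_L(\lambda_{wu}^2/\lambda_{su})^{n/3})$ follows.

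I do not expect a genuine obstacle here: once Proposition \ref{prop:cont} is available, the argument is bookkeeping. The one point needing a little care is the identification, throughout the stated range of $t$, of the unstable bundle of $L^n\circ f_t$ with the constant plane $E^u$ — equivalently, that the strong shear produced by $f_t$ does not break the dominated splitting between $E^u$ and the complementary line. As indicated, this is controlled by the fact that the contraction along that line is pinned to $\lambda_s^n$ by the unit–Jacobian constraint, while the conorm of $L^nDf_t|_{E^u}$ stays above $\lambda_{wu}^n/(C_L t)$; one then simply checks that $\lambda_s^n<\frac12\,\lambda_{wu}^n/(C_L t)$ on the interval $0\le t<p_L(\lambda_{wu}^2/\lambda_{su})^{n/3}$, which follows from $\lambda_s=(\lambda_{wu}\lambda_{su})^{-1}$ after shrinking $p_L$ once more.
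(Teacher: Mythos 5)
Your proposal is correct and follows essentially the same route as the paper: apply Proposition \ref{prop:cont} to $L^n\circ f_t$ after computing $\|L^nDf_t|_{E^u}\|\cdot\|(L^nDf_t|_{E^u})^{-1}\|^2$ and observing that the product is $<1$ precisely on the stated $t$-range. The paper's proof is terser (it simply states the two bounds $\|L^nDf_t|_{E^u}\|<C_Lt\lambda_{su}^n$ and $\|(L^nDf_t|_{E^u})^{-1}\|<C_Lt\lambda_{wu}^{-n}$ and invokes Proposition \ref{prop:cont}); you usefully spell out why $E^u$ remains the unstable bundle of $L^n\circ f_t$ on this range and are a bit more careful near $t=0$ with the $(1+t)$ factor, but these are elaborations of the same argument rather than a different one.
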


\begin{proof}
Apply the Proposition \ref{prop:cont} and the following bounds that can be easily obtained from Section \ref{sec:prelim}:
\begin{eqnarray*}
\|L^nDf_t|_{E^u}\|&<&C_Lt\lambda_{su}^n,\\
\|(L^nDf_t|_{E^u})^{-1}\|&<&C_Lt\lambda_{wu}^{-n}.
\end{eqnarray*}
\end{proof}

Let us comment that this allows us to modify the Lyapunov exponents continuously along the path $t\mapsto L^n\circ f_t$, at least up to $t=p_L\left(\frac{\lambda_{wu}^2}{\lambda_{su}}\right)^{\frac n3}$ (this would increase the highest exponent for $n$ large enough). We do expect that this bound on $t$ is not necesary, in fact we have the following conjecture:

\begin{conjecture}
The Lyapunov exponents vary continuously in the space of $\mathcal{C}^2$ volume preserving Anosov diffeomorphisms on $\mathbb T^3$.
\end{conjecture}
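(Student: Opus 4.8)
The plan is to reduce the continuity of all three Lyapunov exponents to a single assertion about the two--dimensional derivative cocycle on the unstable bundle, and then to attack that assertion through a dominated--versus--fiber--bunched dichotomy, isolating the regime that is genuinely open. First I would fix a $\mathcal{C}^2$ volume--preserving Anosov diffeomorphism $f:\Tor^3\to\Tor^3$; replacing $f$ by $f^{-1}$ if necessary we may assume $\dim E^u_f=2$ and $\dim E^s_f=1$. The stable exponent is continuous with no extra hypothesis: the hyperbolic splitting $T\Tor^3=E^s_f\oplus E^u_f$ depends continuously on $f$ in the $\mathcal{C}^1$ topology, $E^s_f$ is one--dimensional, so by Birkhoff's theorem and ergodicity of the volume for $\mathcal{C}^{1+\alpha}$ Anosov maps one has $\chi^s(f)=\int\log\|Df|_{E^s_f}\|\,d\mu$, and as $f_k\to f$ in $\mathcal{C}^1$ this integrand converges uniformly while $\mu$ stays fixed. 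Since $f$ preserves $\mu$, the Jacobian $\mathrm{Jac}\,f$ is identically $1$, hence the sum of all three exponents vanishes and $\chi^{wu}(f)+\chi^{su}(f)=-\chi^s(f)$ is continuous as well. Thus the conjecture is equivalent to the continuity of the top exponent $\chi^{su}(f)$ of the cocycle $(f,Df|_{E^u_f})$ over the fixed ergodic measure $\mu$.

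Next I would record the easy halves. By subadditivity of $n\mapsto \int\log\|Df^n|_{E^u_f}\|\,d\mu$ one has $\chi^{su}(f)=\inf_{n\ge1}\tfrac1n\int\log\|Df^n|_{E^u_f}\|\,d\mu$, an infimum of maps that are continuous in the $\mathcal{C}^1$ topology (since $Df^n$ and $E^u_f$ depend continuously on $f$ and $\mu$ is fixed), so $\chi^{su}$ is upper semicontinuous; dually $\chi^{wu}(f)=-\inf_{n\ge1}\tfrac1n\int\log\|(Df^n|_{E^u_f})^{-1}\|\,d\mu$ is lower semicontinuous, in accordance with $\chi^{wu}+\chi^{su}=-\chi^s$. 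Hence the whole content of the conjecture is the \emph{lower} semicontinuity of $\chi^{su}$, equivalently the upper semicontinuity of $\chi^{wu}$: one must forbid a $\mathcal{C}^2$--small perturbation from collapsing the unstable spectrum by a definite amount.

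Finally the dichotomy. If $Df|_{E^u_f}$ admits a dominated splitting into two line fields, then domination is $\mathcal{C}^1$--open and the line fields vary continuously, so each of $\chi^{su},\chi^{wu}$ is an integral of a $\log$--norm over a continuously varying one--dimensional bundle and is continuous at $f$. If $Df|_{E^u_f}$ has no dominated splitting but is fiber--bunched, then $f$ is a continuity point by the Backes--Brown--Butler theorem (Corollary~3.4 in \cite{BBB}), exactly as exploited in Appendix~C for the bunched range of the family $f_t$. The remaining case — $Df|_{E^u_f}$ neither dominated nor fiber--bunched — is the main obstacle, and it is precisely the regime in which the examples of \hyperlink{theoremA}{Theorem~A} live and in which, in the $\mathcal{C}^1$ category, the Bochi--Ma\~n\'e mechanism produces discontinuities via $\mathcal{C}^1$--small perturbations that merge the exponents. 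The heart of the conjecture is that this cannot happen in the $\mathcal{C}^2$ category: the perturbations that would collapse the spectrum are $\mathcal{C}^1$--large, hence never $\mathcal{C}^2$--small. Converting this heuristic into a proof is the hard part; I expect it to require quantitative control of how the unstable Oseledets filtration moves under $\mathcal{C}^2$--small perturbations, presumably via the absolute continuity of $W^u$, the bounded distortion available in class $\mathcal{C}^{1+\alpha}$, and an invariance--principle analysis in the spirit of \cite{AV2010} showing that failure of domination for a $\mathcal{C}^2$ cocycle forces enough rigidity of the fiberwise conditional measures to preclude a sudden drop of $\chi^{su}$. This is exactly where the difficulty lies, and why the statement remains conjectural.
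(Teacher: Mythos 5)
The statement you were asked to prove is, in the paper, a \emph{conjecture}: the authors do not prove it, and the only result they supply in its direction is Proposition~\ref{prop:cont} of Appendix~C, deduced from Backes--Brown--Butler, which yields continuity of $\chi^{wu}$ at $L^n\circ f$ only under the explicit fiber--bunching inequality. There is therefore no proof in the paper to compare against, and any review must judge your proposal on its own terms.

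On those terms your analysis is a sound reduction, not a proof. Passing to $f^{-1}$ to arrange $\dim E^u_f=2$ is legitimate since continuity is a local property and negation/reordering of the spectrum is continuous. The continuity of $\chi^s$ via ergodicity of the volume (Anosov's theorem in class $\mathcal{C}^{1+\alpha}$), Birkhoff, and the one--dimensionality of $E^s_f$ is correct, and the identity $\chi^s+\chi^{wu}+\chi^{su}=0$ then reduces everything to a single semicontinuity. Your Kingman argument — $\chi^{su}(f)=\inf_n\frac1n\int\log\|Df^n|_{E^u_f}\|\,d\mu$ as an infimum of $\mathcal{C}^1$--continuous functionals with $\mu$ fixed — correctly gives upper semicontinuity of $\chi^{su}$ (dually lower semicontinuity of $\chi^{wu}$), so the entire content is indeed lower semicontinuity of $\chi^{su}$. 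Your trichotomy (dominated; fiber--bunched; neither) is well posed: the dominated case follows from $\mathcal{C}^1$--openness of domination and continuous dependence of the invariant line fields, and the fiber--bunched case is exactly what the paper extracts from \cite{BBB}. The gap is precisely the third branch, which you explicitly decline to close. Your heuristic — that in the $\mathcal{C}^2$ category a perturbation that collapses the unstable spectrum must be $\mathcal{C}^1$--large and therefore cannot be $\mathcal{C}^2$--small — is the motivating intuition behind the conjecture, but nothing in your proposal (absolute continuity of $W^u$, bounded distortion, the invariance principle of \cite{AV2010}) is developed into an actual estimate ruling out a $\mathcal{C}^2$--small drop of $\chi^{su}$ in the non--dominated, non--bunched regime. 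So what you have produced is a correct and honestly labeled framing of an open problem, consistent with everything the paper proves, rather than a proof of the conjecture.
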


\section{Appendix D: The case of equal eigenvalues} 
\label{sec:appendixd}

We considered in \hyperlink{theoremA}{Theorem A} the case where the two unstable eigenvalues of the linear Anosov automorphism are real and different. Here we want to say a few words about the case when the two unstable eigenvalues are equal (or there is a complex eigenvalue). In particular, is it possible to increase the top Lyapunov exponents for such an Anosov diffeomorphism in the same homotopy class?

In our method we used the fact that the eigenvalues are different in order to have a good cone with strong expansion and invariance (in the good region). It may be possible to try a similar approach, but it seems that the perturbation has to be more complicated, for example the composition of two maps similar to $f_t$ with strong shear in two different directions.

However in this case there is another simpler method to increase the top exponent. In fact one expects that a generic small perturbation will separate the two unstable eigenvalues, and if the sum is constant, the top exponent has to increase. However this alternative method does not give any estimates on the size of the Lyapunov exponents.

We will consider the case of the top eigenvalue real and with multiplicity two. The case of a complex top eigenvalue, or of two eigenvalues with the same absolute value is similar. 

\begin{proposition}
Let $L$ be an Anosov automorphism on $\mathbb T^d$ with the top unstable eigenvalue $\lambda_{su}$ real and of multiplicity two. There exists a $C^{\infty}$ diffeomorphism $f$ of $\mathbb T^d$, volume preserving and homotopic to identity, such that the top Lyapunov exponent of $L\circ f$ is strictly greater than $\log|\lambda_{su}|$.
\end{proposition}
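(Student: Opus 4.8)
The plan is to realize $f$ inside the class $\mathcal G$ of $C^{\infty}$ volume preserving diffeomorphisms of $\Tor^d$, homotopic to the identity and $C^{1}$-close to it, which preserve the linear foliation $\mathcal F^{\mathrm{top}}$ whose leaves are the translates of the two dimensional eigenspace $E^{\mathrm{top}}$ of $L$ associated with $\lambda_{su}$ (this foliation is $L$-invariant since $LE^{\mathrm{top}}=E^{\mathrm{top}}$), and which act on each leaf preserving the induced area. For $f\in\mathcal G$ the map $g:=L\circ f$ is Anosov, the constant bundle $E^{\mathrm{top}}$ is $g$-invariant, and by domination it is the top two dimensional subbundle of the Oseledets splitting of $g$; hence the two largest Lyapunov exponents of $g$ are the extremal exponents of the cocycle $A_f(x):=Dg(x)|_{E^{\mathrm{top}}}$ over $g$. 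Since $f$ is leafwise area preserving and $\det(L|_{E^{\mathrm{top}}})=\lambda_{su}^{2}$, we get $\det A_f\equiv\lambda_{su}^{2}$, so $B_f:=\lambda_{su}^{-1}A_f$ is an $\mathrm{SL}(2,\Real)$-valued cocycle over $g$, and the top Lyapunov exponent of $L\circ f$ equals $\log\lambda_{su}+\chi^{+}(B_f)$ with $\chi^{+}(B_f)\ge 0$. Thus the statement reduces to exhibiting one $f\in\mathcal G$, arbitrarily close to the identity, with $\chi^{+}(B_f)>0$.

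The second step is to notice that the naive perturbation fails and to build the right one. A single shear inside the $\mathcal F^{\mathrm{top}}$-leaves (the analogue of the family $f_t$ of Section \ref{sec:prelim}) produces, because $L|_{E^{\mathrm{top}}}=\lambda_{su}\mathrm{Id}$ is scalar, matrices $B_f(x)$ all lying in one fixed one parameter unipotent subgroup of $\mathrm{SL}(2,\Real)$; these commute, the products $B_f^{(n)}$ grow only polynomially, and $\chi^{+}(B_f)=0$. So one must produce a genuinely non-abelian cocycle. I would take two distinct periodic points $p_1,p_2$ of $L$ contained in $\mathcal F^{\mathrm{top}}$-leaves and place near each orbit a small, disjointly supported bump equal to the time-$s$ map of the leafwise Hamiltonian flow of a function which, in coordinates along $E^{\mathrm{top}}$, equals $x_1x_2$ near the point: such a bump lies in $\mathcal G$ (it is tangent to $E^{\mathrm{top}}$ and leafwise area preserving, hence divergence free), fixes the point, and has derivative there $\exp\bigl(s\,\mathrm{diag}(1,-1)\bigr)=\mathrm{diag}(e^{s},e^{-s})$, so the return matrix of $B_f$ over that orbit is $\mathrm{diag}(e^{s},e^{-s})$. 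Choosing $s\ne0$ at $p_1$ makes $B_f$ \emph{pinching}; choosing the bump at $p_2$ (or a further small bump along a heteroclinic orbit of $g$ between the two periodic orbits) so that the corresponding stable/unstable holonomy of $g$ carries the eigendirections of $B_f$ over $p_1$ off themselves makes $B_f$ \emph{twisting}. Both requirements are compatible with staying in $\mathcal G$ since hyperbolic and elliptic elements of $\mathrm{SL}(2,\Real)$ near the identity are area preserving.

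The final step is to turn pinching and twisting into positivity of the exponent through the Invariance Principle. For $f\in\mathcal G$ the cocycle $B_f$ is fiber-bunched, being $C^{0}$-close to the constant cocycle $\mathrm{Id}$, and $g=L\circ f$ is a transitive Anosov diffeomorphism whose SRB measure is the volume. If $\chi^{+}(B_f)=0$, then by the Invariance Principle of Avila--Viana \cite{AV2010} every $B_f$-invariant probability on $\mathbb{P}E^{\mathrm{top}}$ projecting to the volume has a disintegration invariant under the stable and unstable holonomies of $B_f$; by pinching any such measure must be carried by the expanding/contracting eigendirections over the periodic orbit of $p_1$, while twisting shows these are not holonomy invariant --- a contradiction (equivalently: a fiber-bunched $\mathrm{SL}(2,\Real)$-cocycle with $\chi^{+}=0$ is non-twisting, leaving invariant a continuous line field on $\mathbb{P}E^{\mathrm{top}}$ or being continuously cohomologous to an $\mathrm{SO}(2,\Real)$-valued cocycle, both excluded here). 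Hence $\chi^{+}(B_f)>0$, and the top Lyapunov exponent of $L\circ f$ equals $\log\lambda_{su}+\chi^{+}(B_f)>\log|\lambda_{su}|$, as desired.

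The main obstacle is exactly the production of a positive exponent inside $E^{\mathrm{top}}$: because $L$ acts conformally on the degenerate eigenspace, the most natural perturbations land in a commutative subgroup of $\mathrm{SL}(2,\Real)$ where the exponent is automatically zero, so one is forced to engineer a genuinely pinching-and-twisting cocycle and then invoke the rigidity supplied by the Invariance Principle rather than a hands-on cone estimate. This is also why, in contrast with Theorems A and B, the argument yields no quantitative lower bound for the gain in the top exponent.
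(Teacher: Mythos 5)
Your proposal is correct and uses essentially the same strategy as the paper's Appendix~D: reduce to the two-dimensional $\mathrm{SL}(2,\Real)$ cocycle over $E^{\mathrm{top}}$, perturb near periodic orbits to destroy any rigid invariant structure, and invoke the Avila--Viana Invariance Principle to conclude that the extremal exponents of this cocycle cannot both vanish. The only (minor) difference is the periodic data chosen: the paper arranges one periodic point where the return map on $\mathbb{P}E^{\mathrm{top}}$ is an irrational rotation (forcing full-support disintegration) and another where it is hyperbolic (forcing atomic disintegration), obtaining a contradiction directly, whereas you arrange a pinching periodic orbit plus a twisting holonomy loop in the Bonatti--Viana style; both implementations exclude the same two degeneracies (continuous invariant line field and conjugacy to rotations) and are equally valid.
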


\begin{proof}
Using standard local perturbation techniques one can show that there exists a $C^{\infty}$ diffeomorphism $f$ of $\mathbb T^d$, $C^1$ close to the identity, such that:
\begin{enumerate}
\item
$f$ preserves the planes $E^{su}$ of the strong unstable foliation of $L$, and also preserves the area on these planes (thus it is volume preserving);
\item
there exists a periodic point $p\in \mathbb T^d$ with period $n_p$ such that $L\cdot Df^{n_p}(p)|_{E^{su}}$ has complex eigenvalues which are not real multiples of a root of unity;
\item
there exists a periodic point $q\in \mathbb T^d$ with period $n_q$ such that $L\cdot Df^{n_q}(q)|_{E^{su}}$ has two different eigenvalues
\item
$L\circ f$ is Anosov.
\item
$\|Df\|,\|Df^{-1}\|$ are arbitrarily close to $1$.
\end{enumerate}

Consider $\mathbb{P}:\mathbb T^d\times \mathbb {PR}^2\rightarrow\mathbb T^d\times \mathbb {PR}^2$ to be the projectivization of the cocycle $D(L\circ f)|_{E^{su}}$:
\[
\mathbb{P}(x,[v])=(L\circ f(x),[L\cdot Df(x)v]).
\]
Since $L$ has equal eigenvalues on $E^{su}$ and $f$ is $\mathcal{C}^1$ close to the identity we have that $\mathbb{P}$ is a partially hyperbolic skew product over the hyperbolic map $L\circ f$ with one dimensional fibers.

Assume by contradiction that the top exponent of $L\circ f$ is less than equal to $\log|\lambda_{su}|$. Since the sum of the two top exponent is not changed by the perturbation, this implies that the two top Lyapunov exponent must be both equal to $\log|\lambda_{su}|$. It is well known that this implies in turn that the central Lyapunov exponent of $\mathbb{P}$ vanishes for every invariant measure $\mu$ of $\mathbb{P}$ which projects to the volume on $\mathbb T^d$. The invariance principle of Avila-Viana (\cite{AV2010}) implies that the disintegrations of $\mu$ along the fiber are continuous, invariant under $\mathbb{P}$, and invariant under the stable and unstable holonomies of $\mathbb{P}$. However from our construction $\mathbb{P}^{n_p}$ is a rotation on the fiber over $p$, so the disintegration has full support on this fiber, while $\mathbb{P}^{n_q}$ is a north-south pole map on the fiber over $q$, so the disintegration is atomic. This is a contradiction.
\end{proof}

\section{Appendix E: Physical measures} 
\label{sec:appendixE}

For an endomorphism $f:M\to M$, let us recall that an $f$-invariant measure $\mu$ is said to be physical if its basin
\[
B(\mu):=\{x\in M:\forall \phi\in\mathcal{C}^0(M), \frac{\phi(x)+\phi(fx)+\cdots \phi(f^{k-1}x)}{n}\xrightarrow[n\to\oo]{}\int \phi d\mu\}
\]
has positive Lebesgue measure. Although the existence and properties of such measures is a central topic in smooth ergodic, not too many types of examples of systems having physical measures are known, particularly if one further requires these to be abundant, i.e.\@ stable under some particular perturbations, or even part of continuous families of finitely many parameters. Instead of giving a list of known examples we refer the reader to the introduction of \cite{NUHD} where this point is discussed.

\begin{proposition}
The examples constructed in \hyperlink{theoremB}{Theorem B} posses non-uniformly hyperbolic ergodic physical measures.
\end{proposition}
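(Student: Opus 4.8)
Write $g=L^n\circ f_t$ for the diffeomorphism produced in the proof of \hyperlink{theoremB}{Theorem B} and let $\mu$ be the normalized Haar (Lebesgue) measure on $\mathbb T^4$, which $g$ preserves; the plan is to pass to the ergodic components of $\mu$ and show that each one is a non-uniformly hyperbolic physical measure. The key preliminary point is that $(g,\mu)$ has no zero Lyapunov exponent. Indeed, since $L$ and $Df_t$ both preserve the $u$-component of vectors, one exponent of $g$ is exactly $\log\lambda_u^n>0$. The bundle $E^u\oplus E^s_1$ (with $E^s_1=E^{ws}\oplus E^{ms}$) is $Dg$-invariant and $Dg$ restricted to it has constant Jacobian $\lambda_u^n(\lambda_{ws}\lambda_{ms})^n$; since $|\det Dg|\equiv 1$ and $\lambda_u\lambda_{ws}\lambda_{ms}\lambda_{ss}=1$, the Lyapunov exponent carried by the complementary line equals $\log\lambda_{ss}^n<0$. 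Finally, $g$ preserves the area form on the invariant plane bundle $E^s_1$ (as $L^n$ has determinant $(\lambda_{ws}\lambda_{ms})^n$ there and $f_t$ restricts to an area-preserving shear of the $E^s_1$-planes), so the two exponents of $Dg|_{E^s_1}$ sum to the negative constant $\log(\lambda_{ws}\lambda_{ms})^n$; by \hyperlink{theoremB}{Theorem B} the larger of them is strictly positive at $\mu$-a.e.\@ point, hence the smaller one is strictly negative. Thus $\mu$-a.e.\@ point has four nonzero Lyapunov exponents (two positive, two negative), i.e.\@ $\mu$ is a non-uniformly hyperbolic measure for $g$.

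Given this, one invokes the Pesin ergodic decomposition of smooth hyperbolic measures (see e.g.\@ \cite{Barreira2013}): as $g$ is $\mathcal C^{\oo}$, preserves the smooth measure $\mu$, and has all Lyapunov exponents nonzero almost everywhere, there is an at most countable (mod $0$) partition $\mathbb T^4=\bigsqcup_{i\ge 1}\Lambda_i$ into $g$-invariant sets of positive $\mu$-measure such that each system $(g|_{\Lambda_i},\mu_i)$, where $\mu_i=\mu|_{\Lambda_i}/\mu(\Lambda_i)$, is ergodic. Each $\mu_i$ is again non-uniformly hyperbolic, since by ergodicity its exponents are constant and agree with some of the nonzero a.e.\@ exponents of $\mu$. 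Each $\mu_i$ is also physical: Birkhoff's theorem for the ergodic system $(g|_{\Lambda_i},\mu_i)$ gives that $\mu_i$-a.e.\@ point belongs to the basin $B(\mu_i)$, so $B(\mu_i)\cap\Lambda_i$ has full $\mu$-measure in $\Lambda_i$ and therefore $\mu\big(B(\mu_i)\big)\ge\mu(\Lambda_i)>0$; since $\mu$ is (normalized) Lebesgue measure, $B(\mu_i)$ has positive Lebesgue measure. Hence each $\mu_i$ is a non-uniformly hyperbolic ergodic physical measure, which is what we wanted.

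The part I expect to require the most care is the hyperbolicity of $\mu$ — specifically, ruling out a zero exponent among the two exponents of $Dg|_{E^s_1}$. This is exactly where the structure of the construction matters: \hyperlink{theoremB}{Theorem B} delivers the second positive exponent \emph{inside} the $Dg$-invariant, area-preserved plane bundle $E^s_1$, which forces its companion exponent below $\log(\lambda_{ws}\lambda_{ms})^n<0$, while the determinant bookkeeping identifies the remaining exponent as $\log\lambda_{ss}^n$ and confirms that the $u$-exponent stays exactly $\log\lambda_u^n$. Once the volume is known to be non-uniformly hyperbolic, the passage to ergodic components and the verification that each is physical are standard consequences of Pesin theory.
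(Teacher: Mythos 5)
Your proof is correct and follows essentially the same route as the paper: verify that all Lyapunov exponents are nonzero $\mu$-a.e.\ and then invoke Pesin's ergodic decomposition of a smooth hyperbolic measure. The paper compresses the first step into a single sentence, while you give the full bookkeeping via the $Dg$-invariant filtration $E^s_1\subset E^u\oplus E^s_1\subset T\Tor^4$ and determinant arithmetic, which is a useful clarification since Theorem B itself only asserts the two positive exponents. One small wording slip worth fixing: you write that $g$ \emph{preserves} the area form on $E^s_1$, but this is not the case — $Dg|_{E^s_1}$ has constant Jacobian $(\lambda_{ws}\lambda_{ms})^n\neq 1$, and it is precisely this constancy (not preservation) that forces the two exponents inside $E^s_1$ to sum to $\log(\lambda_{ws}\lambda_{ms})^n$. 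The conclusion you draw from it is nonetheless the right one.
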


Non-uniformly hyperbolic means that all Lypunov exponents are different from zero.

\begin{proof}
The result follows from the work of Pesin \cite{LyaPesin}. Since the Lyapunov exponents are non-zero almost everywhere, the volume splitts into at most countably many ergodic components and one has a decomposition $M=M_0\cup \bigcup_{n=1}^{N}M_n, N\in\mathbb{Z}\cup{+\oo}$ where $\mu(M_0)=0$ and for every $n\geq 1$ there exists an ergodic measure $\mu_n$ supported in $M_n$ ($\mu_n(M\setminus M_n)=0$). It holds that $\mu$ is can be written as $\sum_{n=1}^{\oo}a_n\mu_n$ for some non-negative sequence $(a_n)_n$, and since necessarily $\mu(M_n)>0$ for some $n$, $\mu_n$ is non-uniformly (ergodic) physical measure for $f$.
\end{proof}

It is inherent to the construction that in the strongly partially hyperbolic examples, the center bundle does not admit a dominatted splitting into one dimensional sub-bundles. Together with the remarks of \hyperlink{sec:appendixb}{Appendix B} we obtain $\mathcal{C}^2$ robust examples of conservative systems having non-uniformly hyperbolic physical measures satisfying 
\begin{itemize}
	\item there is no domination between the stable and unstable Pesin bundles (cf. \cite{LyaPesin});
	\item they in the same homotopy class of Anosov diffeomorphisms, but are not Anosov.
\end{itemize}
As far as we know no other examples of this kind are available in the literature.

One is led to wonder wether the physical measure referred above is unique; this amounts to showing that the volume is ergodic for the system. As mentioned in page \pageref{ergodicidad}, establishing this property remains for future research, but it is possible that some more general principle could be applied.

\smallskip 
 
\noindent\textbf{Question:} Suppose that $f:\Tor^4\to\Tor^4$ is a conservative (strongly) partially hyperbolic diffeomorphism with two dimensional center bundle that is non-uniformly hyperbolic. Asume further that $f$ is homotopic to an Anosov automorphism. Is $(f,\mu)$ ergodic?

\bibliographystyle{alpha}
\bibliography{bibliococyclesanosov}	
\end{document}